\documentclass[12pt,reqno,a4paper]{article}
\usepackage{mathtools}
\usepackage{enumerate}
\usepackage{graphicx} 
\usepackage{amsmath,amssymb,amsfonts,amsthm}
\usepackage{xcolor}
\usepackage[
inner=2.3cm,
outer=2.3cm,
top=2.7cm,
marginparwidth=2.5cm,
marginparsep=0.1cm
]
{geometry}
\allowdisplaybreaks
\usepackage{mathrsfs}

\usepackage{cite}

\newcommand{\ve}{\varepsilon}
\newcommand{\round}[1]{\lfloor#1\rfloor}
\newcommand{\Lip}{{\mathrm{Lip}}}

\newcommand\bN{{\mathbb N}}

\newcommand\bP{{\mathbb P}}

\newcommand\bR{{\mathbb R}}

\newcommand\bZ{{\mathbb Z}}

\newcommand\cA{{\mathcal A}}
\newcommand\cB{{\mathcal B}}
\newcommand\cC{{\mathcal C}}
\newcommand\cD{{\mathcal D}}
\newcommand\cE{{\mathcal E}}
\newcommand\cF{{\mathcal F}}
\newcommand\cG{{\mathcal G}}
\newcommand\cH{{\mathcal H}}
\newcommand\cI{{\mathcal I}}
\newcommand\cJ{{\mathcal J}}

\newcommand\cL{{\mathcal L}}

\newcommand\cP{{\mathcal P}}
\newcommand\cQ{{\mathcal Q}}

\newcommand\cT{{\mathcal T}}

\newcommand\cW{{\mathcal W}}

\newcommand\fD{{\mathfrak D}}

\newcommand\fF{{\mathfrak F}}

\newtheoremstyle{special}%
{}%
{}%
{}%
{}%
{\scshape}%
{.}%
{.5em}%
{}
\newtheorem{theorem}{Theorem}

\newtheorem{lemma}[theorem]{Lemma}
\newtheorem{proposition}[theorem]{Proposition}
\newtheorem{remark}[theorem]{Remark}
\newtheorem{definition}[theorem]{Definition}
\newtheorem{corollary}[theorem]{Corollary}

\def\Id{\text{\rm Id}}

\DeclareMathOperator{\esssup}{esssup}
\DeclareMathOperator{\essinf}{essinf}

\title{Optimal CLT error bounds in the second order Wasserstein metric via Stein's method for weakly dependent sequences}

\author{
	Yeor Hafouta\thanks{Department of Mathematics, Ben-Gurion University  of the Negev, Israel. \\ Email: yeorh@bgu.ac.il}
	\qquad
	Juho Lepp\"{a}nen\thanks{Department of Mathematics, Tokai University, Japan. \\
	Email: leppanen.juho.heikki.g@tokai.ac.jp}
}

\begin{document}
	
	\maketitle

    \begin{abstract}
    We use Stein's method to prove general upper bounds on the second order Wasserstein distance in the CLT for general dependent partial sums $S_N=X_1+\ldots +X_N$. Then we use these estimates to provide optimal $O(N^{-1/2})$ rates for a variety of chaotic dynamical systems and  $\phi$- and $\psi$-mixing sequences.
    Our results for $\psi$-mixing sequences are obtained under the fourth moment condition, which matches the best known results in the iid case. This is
     the first time that optimal CLT rates of any kind are obtained for unbounded weakly stationary variables under general mixing conditions. We then apply this result and obtain optimal CLT rates $O\left((\text{Var}(S_N))^{-1/2}\right)$ for non-stationary $\psi$-mixing sequences without any growth rates on $\text{Var}(S_N)$.
    We also obtain close to optimal CLT rates for nonconventional sums and some classes of random dynamical systems with non-uniform decay of correlations.
\end{abstract}

   \section{Introduction}
Let $X_1,X_2,...$ be a sequence of square integrable random variables and let $S_N=X_1+\ldots+X_N$. Then the classical central limit theorem (CLT) states that, when $X_i$ are iid and have zero mean, $\frac{S_N}{\|S_N\|_{L^2}}$ weakly converges towards the standard normal distribution. In the past century there have been many extensions of the CLT for classes of weakly dependent random variables $X_j$. 

Once the CLT is obtained it is natural to study convergence rates.
In the seminal works of Berry \cite{Berry} and Esseen \cite{Esseen}, optimal uniform CLT rates $O(N^{-1/2})$ were obtained for iid $X_i$ under the assumption that $\mathbb E[|X_i|^3]<\infty$. Ever since, uniform CLT rates were obtained  by many authors for weakly dependent sequences generated by sufficiently fast mixing Markov chains or chaotic dynamical systems, and we refer to \cite{HH} for a general (spectral) approach.
Another notable method for proving CLT rates is the, so-called, Stein's method \cite{CGS11,Ross}. However, it is still unclear in general when it yields optimal CLT rates. Recently, uniform (multidimensional) optimal rates for convex sets were  achieved in \cite{LeppSD} for several classes of dynamical systems with linear growth of $\text{Var}(S_N)$. In this paper we will address this problem and show that 
Stein's methods yields optimal CLT rates in the second order Wasserstein metric $\cW_2$ (defined below)  for a variety of weakly dependent sequences (not necessarily stationary).

Recall next that for $1\leq p<\infty$ the $p$-th Wasserstein distance between probability laws $\mu,\nu$ with finite $p$-th moment is given by 
$$
\mathcal W_p(\mu,\nu)=\inf_{(W,Z)}\|W-Z\|_{L^p}
$$
where the infimum runs over all pairs of random variables $(W,Z)$ such that $W$ is distributed according to $\mu$ and $Z$ is distributed according to $\nu$. This is a particular case of a minimal distance, whose study was initiated by P. L\'evy. 
 We refer to \cite[Section 1]{R09} for a detailed discussion about the history of CLT rates in the metric $\cW_p$. In this setting the problem is to determine whether 
$$
\lim_{N\to\infty}\cW_p\left(\mathcal L(S_N/\|S_N\|_{L^2}),N(0,1)\right)=0
$$
and at which rate. Here $N(0,1)$ denotes the standard normal law and $\cL(W)$ is the law of a random variable $W$. 
Note that even in for iid $X_i$ optimal CLT rates in the metric $\cW_p$ were only obtained relatively recently, see \cite{B18} and \cite{R09}. Note that in the iid case optimal $\cW_p$ rates were obtained when $X_i\in L^{p+2}$.

In this paper we will focus our attention to optimal CLT rates for weakly dependent sequences $X_i$ in the second order Wasserstein metric $\mathcal W_2(\cdot,\cdot)$.
We will first use an adaptation of Stein's method to provide general bounds on the $\mathcal W_2$-distance from normality. 
These bounds generalize those in \cite{F19} from the locally
dependent case to sequences with a general dependence structure.
We apply these bounds to obtain optimal rates for dynamical systems
satisfying functional correlation bounds with sufficiently fast
polynomial decay, as well as for $\phi$- and $\psi$-mixing sequences.
In all of these examples, this is the first time that optimal rates are obtained in this metric under our assumptions. 
We also obtain close to optimal CLT rates in $\mathcal W_2$
for random intermittent maps and random nonuniformly expanding
maps on the unit interval. These systems exhibit nonuniform
polynomial decay of correlations.
Moreover, we prove CLT rates for nonconventional sums,
generalizing the results in \cite[Chapter 1]{HK} from estimates
in $\mathcal W_1$ to estimates in $\mathcal W_2$.

For  uniformly bounded $\phi$-mixing sequences with linear growth of the variance of  $S_N$ our results extend the optimal uniform CLT rate of Rio \cite{Rio}  to the metric $\mathcal W_2$. We would also like to refer to \cite{Aust} for optimal rates for  stationary \textbf{interlaced} $\alpha$-mixing sequences and to \cite{DMR23} for optimal rates for   bounded functionals of stationary $\alpha$-mixing sequences with values on a Polish space. Our results show that for $\phi$-mixing sequences with linear growth of the variance (e.g. weakly stationary) strong stationarity can be dropped.
Moreover, for $\psi$-mixing sequences our results hold true under the fourth moment condition $\sup_j\|X_j\|_{L^4}<\infty$. For the best of our knowledge, this is the first time that optimal CLT rates of any kind are obtained for unbounded variables solely under general mixing conditions. 
As an application we are able to prove optimal CLT rates for uniformly bounded $\psi$-mixing sequences without any growth rates on the variance of $S_N$. This is the first time that optimal CLT rates are obtained only under general mixing conditions, without Markovian or dynamical structure like in \cite{DH23,DH25}.

Convergence rates in $\mathcal W_p$ for the CLT and the weak
invariance principle (WIP) have been studied previously for both
autonomous and nonautonomous dynamical systems.
In \cite[Section 3.2]{DMR23}, the optimal $\mathcal W_2$ rate
$O(N^{-1/2})$ in the CLT was established for H\"older observables
of the Liverani--Saussol--Vaienti (LSV) map $T_\eta$
(see \eqref{eq:lsv} for the definition) with parameter $\eta<1/4$.
This is precisely the regime in which the correlation decay rate
$\rho(n)=n^{1-1/\eta}$ satisfies
$$\sum_{n=1}^{\infty}n^2\rho(n)<\infty.$$
Our Theorem \ref{thm:lsv} extends this estimate to nonautonomous
compositions of LSV maps.
For the WIP, $\mathcal W_p$ rates for nonuniformly expanding
and hyperbolic systems admitting Young towers were obtained
in \cite{DMR24,LW24-1}.
The rates in \cite{LW24-1} depend on the integrability of the
return time; for LSV maps, they approach $O(N^{-1/4})$
as $\eta\to0$.
Under a finite fourth-moment assumption on the return time,
\cite{DMR24} obtained the rate
$O(N^{-1/4}(\log N)^{1/4})$ in $\mathcal W_2$.
In the nonautonomous setting, \cite{LW24-2} established
$\mathcal W_p$ rates in the WIP for sequential uniformly
expanding systems.
Quenched rates in the same metric for random Young towers,
including random LSV maps, were obtained recently
in \cite{LSVW25}.
Optimal $\mathcal W_p$ rates in the CLT for sequences of
uniformly expanding maps were established in \cite{DH25}.
To our knowledge, this is the only previous work establishing
such rates for $p>1$ in the CLT for nonautonomous dynamical systems.

The paper is organized as follows.
In Section \ref{Sec2}, we prove general estimates on the
$\cW_2$-distance from normality for a general sum, using an
appropriate metric on the index set. This generalizes the notion
of local dependence, allowing us to consider dependencies between
indices that are far apart.
In Section \ref{Sec3.1.1}, we apply these estimates under
functional correlation decay to obtain optimal CLT rates for
classical sums arising from various dynamical systems.
In Section \ref{Sec3.1.2}, we apply the same estimates to obtain
close to optimal CLT rates for nonconventional sums.
In Section \ref{Sec3.2}, we provide a dynamical scheme for
verifying functional correlation bounds and discuss applications
to stationary and nonstationary dynamical systems using
estimates established in previous works.
Using this scheme, we prove nonuniform functional correlation
bounds for random LSV maps in Section \ref{Sec3.3} and for
random piecewise non-uniformly expanding maps that are only
expanding on average in Section \ref{Sec3.4}.
For both classes of systems, we then obtain close to optimal
$\cW_2$ rates in the CLT.
Section \ref{Sec4} is devoted to optimal CLT rates in $\cW_2$ for $\phi$-mixing uniformly bounded sequences with linear growth of $\text{Var}(S_N)$. In Section \ref{Sec5} we achieved the same task for $\psi$-mixing sequences satisfying $\sup_j\|X_j\|_{L^4}<\infty$ (suboptimal rates are obtained when $\sup_j\|X_j\|_{L^3}<\infty$). As an application, we also obtain optimal $\cW_2$ rates $O\left((\text{Var}(S_N))^{-1/2}\right)$ for uniformly bounded $\psi$-mixing sequences without limitations on the variance growth.
	
	\section{A general Wasserstein-2 bound for the CLT}\label{Sec2}
	
	For two Borel probability measures $E_1$ and $E_2$ on $\bR$, the Wasserstein-$p$ distance, $p \ge 1$, 
	is defined by 
	$$
	\mathcal{W}_p(E_1, E_2) = \biggl( \inf_{\pi \in \Gamma(E_1, E_2) }  \int_{\bR^2} |x-y|^p \, d \pi(x,y)  \biggr)^{ \frac1p },
	$$
	where $ \Gamma(E_1, E_2) $ denotes the set of all Borel probability measures on $\bR^2=\bR \times \bR$ with $E_1$ and $E_2$ as marginals. Note that 
	$$
	p \le q \implies \cW_p(E_1, E_2) \le  \cW_q(E_1, E_2).
	$$
	It was established in \cite{BF81} that, for Borel probability measures $E_0, E_1, E_2, \ldots$ with $\int_{ \bR } |x|^p \, d E_n < \infty$ for $n = 0,1,2,\ldots$, we have 
	$$
	\lim_{n \to \infty} \cW_p( E_n, E_0 ) = 0 \iff 
	E_n \stackrel{\cD}{\to} E_0 \quad \text{and} \quad 
	\lim_{n \to \infty} \int_{\bR} |x|^p \, d E_n(x) = \int_{\bR} |x|^p \, d E_0(x),
	$$
	where $\stackrel{\cD}{\to}$ denotes convergence in distribution. 
	We write $\cL(X)$ for the law (distribution) of a random variable X, and denote by 
	$N(0,1)$ the normal distribution with mean zero and unit variance.

	Let $N \ge 1$, and let $Y_0,\ldots,Y_{N-1}$ be centered
	$\bR$-valued random variables on a probability space $(S,\cA,P)$
	such that
	$
	E(|Y_n|^3) < \infty
	$
	for every $0 \le n < N$. Set
	$
	W=\sum_{n=0}^{N-1}Y_n,
	$
	and assume that
	\begin{align}\label{eq:w_var}
	E(W^2) =1.
	\end{align}
	For any integrable random variable $X$, we write
	$$
	\overline{X} = X - E(X).
	$$
Let $\mathbb N_0=\mathbb N\cup\{0\}$ and let $d:\mathbb N_0\times\mathbb N_0\to\mathbb N$ be a nonnegative symmetric function such that for all $n<N$,
\begin{equation}\label{d cond}
\{0\leq i<N: d(i,n)\geq N\}=\emptyset.    
\end{equation}
    Next, given integers $0 \le n < N$ and $m,\ell \in \bZ$, we define the following auxiliary random variables:
	\begin{align}\label{eq:punctured_sums}
		&W_{n,m} = \sum_{ \substack{ d(i,n) > m \\ 
				0 \le i < N } }  Y_i, \quad 
		Y_{n,m} = \sum_{ \substack{ d(i,n) = m \\ 
				0 \le i < N} } Y_{i}, \quad 
		Y_{n, \langle m, \ell \rangle}  =  \sum_{ 
			\substack{ m < d(i,n) \le \ell \\ 
				0 \le i < N } 
		} Y_i.
	\end{align}
	Note that the following identities hold:
	\begin{align}\label{eq:relations_w}
		\begin{split}
			W_{n, N - 1} &= 0, \quad W_{n, -1} = W,\\
			Y_{n,m} &= W_{n, m - 1} - W_{n, m }, \quad 
			W - W_{n,m}  = \sum_{i \in [n]_m} Y_i,  \\
			Y_{n, \langle m, \ell \rangle} &= 
			W_{n, m} - W_{n, \ell} \quad (m \le \ell),
		\end{split}
	\end{align}
	where 
	$
	[n]_m = \{ 0 \le i < N \: : \: d(i,n) \le m\}.
	$
	
	Given $C_0 > 0$, let $\mathfrak{F}_2(C_0)$ denote the class of all functions 
	$f : \bR \to \bR$ whose second derivative 
	$f''$ is $C_0$-Lipschitz continuous. 
	For convenience, we introduce the following notation for  $f \in \mathfrak{F}_2(C_0)$:
	\begin{align*}
		\delta_f(x,y) = f''(x) - f''(y).
	\end{align*}
	
	\begin{proposition}\label{prop:general_w2} There exist absolute constants 
		$C,C_0, C_1 > 0$ and  for each $f \in \fF$ a
		measurable function $\theta_f : \bR^2 \to \bR$ with $| \theta_f(x,y) | \le C_1 |x-y|^2$
		such that 
		\begin{align}\label{eq:bound_prelim}
			&\cW_2(\cL(W), N(0,1))
			\le C \sup_{f \in \mathfrak{F}_2(C_0)} \biggl\{ 
			|\zeta| +  \sum_{i=1}^{6}  \sqrt{|\cE_i|} \biggr\},
		\end{align}
		where
				\begin{align}\label{eq:xi}
			\begin{split}
				\zeta &= E \biggl\{  
				\sum_{n=0}^{N-1} 
				\sum_{m=1}^{N-1} \sum_{k=2}^{N-1} 
				Y_{n, \langle m(k-1), mk \rangle}  Y_{n,m} Y_n
				+ 
				\sum_{n=0}^{N-1} \sum_{k=1}^{N-1} 
				Y_{n,k}  
				(Y_n)^2  
				+ \frac12 \sum_{n=0}^{N-1} 
				\sum_{m=0}^{N-1} 
				Y_n (Y_{n,m})^2 \biggr\}, 
			\end{split}
		\end{align}
		and the terms $\cE_i$ are defined as follows:
		\begin{align*}
			\cE_1 =\int_0^1
			E \biggl\{ \sum_{n=0}^{N-1} 
			\sum_{m=0}^{N-1}   Y_n (Y_{n,m})^2 
			  \delta_f(   W_{n,m} + uY_{n,m}, W_{n,m}   ) (1-u) 
			\biggr\} \, du,
		\end{align*}
		\begin{align*}
			\cE_2 &=  \sum_{n=0}^{N-1} \sum_{m=0}^{N-1}  
			E[ \theta_f(  W_{n,m} , W  ) ] 
			E[ Y_{n,m} Y_n ]  
			 -  \sum_{n=0}^{N-1} 
			\sum_{k=1}^{ N -1} E[  \delta_f(  W_{n,k-1}, W_{n,k}  ) Y_n ]
			E[ Y_n^2 ] \\ 
			&\qquad -
			\sum_{n=0}^{N-1} \sum_{m=1}^{N-1} 
			\sum_{j \in [n]_m}  \sum_{k=2}^{ N -1}   E[  
			\delta_{f}(   W_{n,m(k-1)},  W_{n, mk}  )  Y_j  ]
			E[ Y_n Y_{n,m} ],
		\end{align*}
		\begin{align*}
			\cE_3 &= \int_0^1 E \biggl\{ \sum_{n=0}^{N-1} 
			\sum_{m=1}^{N-1} \sum_{k=2}^{N-1}
			\overline{
				\delta_f(  W_{n,mk}  + u Y_{n, \langle m(k-1), mk \rangle } ,  W_{n, 2mk} ) 
				Y_{n, \langle m(k-1), mk \rangle } }  
			Y_{n,m} Y_n \biggr\} \, du \\ 
			&\qquad+ \int_0^1 E \biggl\{ \sum_{n=0}^{N-1} \sum_{k=1}^{N-1} 
			\overline{ 
				\delta_f( W_{n,k} + u Y_{n,k},  W_{n, 2k} )    Y_{n,k} } 
			Y_n^2 \biggr\} \, du,
		\end{align*}
		\begin{align*}
			\cE_4 &=  E \biggl\{ \sum_{n=0}^{N-1} 
			\sum_{m=1}^{N-1} \sum_{k=2}^{N-1} 
			\sum_{p=3}^{N-1}  \overline{   
			\delta_f( W_{n, mk (p-1)}, W_{n, mkp}  )	
				 }
			Y_{n, \langle m(k-1), mk \rangle}  Y_{n,m} Y_n \biggl\} \\
			&\qquad +  E \biggl\{ \sum_{n=0}^{N-1} \sum_{k=1}^{N-1} \sum_{p=3}^{N-1} 
			\overline{
				\delta_f( W_{n, k(p-1)},  W_{n, kp}  )
				 }   Y_{n,k}  
			(Y_n)^2  \biggl\}, \\
			&\qquad + E \biggl\{  \sum_{n=0}^{N-1} 
			\sum_{m=1}^{N-1} \sum_{k=2}^{N-1} 
			E[ \delta_f(  W_{n, 2mk} , W )  ]
			Y_{n, \langle m(k-1), mk \rangle}  Y_{n,m} Y_n \biggr\}, \\ 
			&\qquad + E \biggl\{
			\sum_{n=0}^{N-1} \sum_{k=1}^{N-1} 
			E [  \delta_f ( W_{n, 2k}, W  )]
			  Y_{n,k}  
			Y_n^2  \biggr\},
		\end{align*}
		\begin{align*}
			\cE_5 &= E \biggl\{  \sum_{n=0}^{N-1} 
			\sum_{m=1}^{N-1} \sum_{k=2}^{N-1} 
			\sum_{p=3}^{N-1}
			E \biggl[ \overline{ 
				\delta_f( W_{n, mk (p-1)}, W_{n, mkp}   )
				}
			Y_{n, \langle m(k-1), mk \rangle}  
			\biggr]  
			 Y_{n,m} Y_n  \biggr\}  \\
			&\qquad + E \biggl\{  \sum_{n=0}^{N-1} \sum_{k=1}^{N-1} \sum_{p=3}^{N-1} 
			E \biggl[  
			\overline{ \delta_f( W_{n, k(p-1)}, W_{n, kp} ) }   Y_{n,k}  \biggr]
			Y_n^2 \biggr\},
		\end{align*}
		\begin{align*}
			\cE_6 &=  \frac12 E \biggl\{   \sum_{n=0}^{N-1} 
			\sum_{m=1}^{N-1} \sum_{k=2}^{N-1} 
			\overline{
			\delta_f( W_{n,m (k-1)  } , W_{n,m k  }  )	} Y_n Y_{n,m}^2 
			\biggr\} 
			+ \frac12 E \biggl\{  \sum_{n=0}^{N-1} \sum_{k=1}^{N-1} 
			\overline{ \delta_f( W_{n,k-1} , W_{n,k} )
				} Y_n^3
			\biggr\} \\
			&\qquad + \frac12 E \biggl\{  \sum_{n=0}^{N-1} 
			\sum_{m=0}^{N-1} E[ 
			\delta_f(  W_{n,m } , W  ) ] 
			Y_n (Y_{n,m})^2  \biggl\}. 
		\end{align*}

	\end{proposition}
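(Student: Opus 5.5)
Of the two ingredients, the reduction to $f$-test functions is standard; the main work, and the main risk, is in the algebraic Stein expansion.

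\textbf{Reduction to test functions.} I would start from the $\cW_2$ Stein bound for the normal distribution used by Fathi \cite{F19} in the locally dependent case. It says there is an absolute constant $C$ such that, for $E(W^2)=1$, $\cW_2(\cL(W),N(0,1))$ is controlled by the Stein discrepancy
$$\sup_{f\in\fF_2(C_0)}\bigl|E[Wf'''(W)-f''''(W)]\bigr|\text{-type quantities}.$$
More precisely, I would use the variant in which the square root of the discrepancy appears. I would redo Fathi's argument in the form: $\cW_2\le C\sup_f\{|A_f|+\sqrt{|B_f|}\}$, where $A_f$ collects the third-order cumulant-type terms and $B_f$ collects the remainders. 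This is exactly the shape of \eqref{eq:bound_prelim}, with $\zeta$ linear and the $\cE_i$ under square roots. So the first step is to isolate the functional $E[Wf'(W)-f''(W)]$, with $f''$ Lipschitz, as the object to expand.

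\textbf{Telescoping.} The core is an exact algebraic decomposition of $E[Wf'(W)]=\sum_n E[Y_nf'(W)]$ via the shells $W_{n,m}$. By \eqref{eq:relations_w}, $W=W_{n,-1}$ and $W_{n,N-1}=0$, so
$$f'(W)=\sum_{m=0}^{N-1}\bigl(f'(W_{n,m-1})-f'(W_{n,m})\bigr)+f'(0).$$
I would Taylor expand each increment to second order with integral remainder:
$$f'(W_{n,m-1})-f'(W_{n,m})=f''(W_{n,m})Y_{n,m}+\int_0^1 \bigl(f''(W_{n,m}+uY_{n,m})-f''(W_{n,m})\bigr)Y_{n,m}\,du .$$
\begin{itemize}
\item The quadratic remainder gives $\tfrac12 Y_n Y_{n,m}^2$ plus the $\delta_f$-correction $\cE_1$.
\item In the main term $E[Y_nY_{n,m}f''(W_{n,m})]$, I would replace $f''(W_{n,m})$ by $f''(W)$, paying $\theta_f$ and $\delta_f$ terms. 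The product $E[Y_nY_{n,m}]E f''(W)$ sums to $E f''(W)$ by $E W^2=1$, and this cancels $E f''(W)$.
\item The fluctuation part $\overline{f''(W_{n,m})}$ is then telescoped a second time along the coarser shells $W_{n,m(k-1)}\to W_{n,mk}$ (for $m=0$ one uses the scale $k$ itself, hence the separate $k$-sums with $Y_n^2$).
\item Each increment is expanded once more around $W_{n,2mk}$, producing the trilinear terms $Y_{n,\langle m(k-1),mk\rangle}Y_{n,m}Y_n$ and $Y_{n,k}Y_n^2$. Their $f'''$-free leading parts assemble into $\zeta$, and the errors into $\cE_3$.
\item A third telescoping in $p$ (scales $mk(p-1)\to mkp$) decouples $\delta_f$ from the product, giving the covariance-type terms $\cE_4,\cE_5$.
\item The terms where the expectation of $\delta_f$ factors out give $\cE_2$ and $\cE_6$.
\end{itemize}

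\textbf{Converting to the stated form.} After the identity, I would bound the leading cubic terms by $|\zeta|$ times $\sup|f'''|\le C_0$. I would also check that every remainder is exactly one of the listed $\cE_i$, so no further estimation is needed at this stage. The estimation is deferred to the applications, where $d$ encodes the dependence and condition \eqref{d cond} guarantees that every sum terminates within $N$.

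\textbf{Main obstacle.} I expect the main obstacle to be the bookkeeping of the nested telescopes: making sure the $m=0$ versus $m\ge1$ cases and the boundary indices $k=1$ versus $k\ge2$, $p\ge3$ match precisely with the displayed sums. Also delicate is producing $\theta_f$ with $|\theta_f(x,y)|\le C_1|x-y|^2$. I would obtain it as the second-order Taylor remainder of $f''$-related quantities at $W$, using the Lipschitz bound on $f''$ integrated once more. Where a term fails to match, I would absorb the mismatch into the adjacent $\cE_i$, as the statement only requires upper bounds with absolute constants.
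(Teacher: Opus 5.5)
\textbf{The gap is in the ingredient you call standard:} the passage from the Stein expansion to \eqref{eq:bound_prelim}. There is no off-the-shelf Stein bound of the shape $\cW_2\le C\sup_f\{|A_f|+\sqrt{|B_f|}\}$. The step you actually describe is to bound the leading cubic terms by $|\zeta|$ times a sup-norm of a derivative of $f$. That only gives $Z_2(\cL(W),N(0,1))\le C|\zeta|+\sum_i|\cE_i|$. Rio's inequality (Lemma \ref{lem:rio}) then yields $\cW_2(\cL(W),N(0,1))\le C(\sqrt{|\zeta|}+\sum_i\sqrt{|\cE_i|})$. Since $\zeta$ is typically of order $N^{-1/2}$, this is the suboptimal $N^{-1/4}$ rate, not \eqref{eq:bound_prelim}.

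\textbf{The missing idea is cumulant matching with an auxiliary i.i.d.\ sum.}
\begin{itemize}
\item You must keep the leading term in the exact form $-\zeta E[f_h''(W)]$, with $\zeta$ independent of $h$. You then replace $E[f_h''(W)]$ by $\Phi(f_h'')$ at cost $C|\zeta|\cW_1\le C|\zeta|\cW_2(\cL(W),N(0,1))$; this is Lemma \ref{prop:dec}.
\item You compare $W$ not with $N(0,1)$ but with $\Theta_m=m^{-1/2}\sum_{i=1}^m Z_i$. Here $\beta=2\zeta$, $m=\lfloor c_0\beta^{-2}\rfloor$, and the $Z_i$ are i.i.d.\ four-point variables with mean $0$, variance $1$, $E[Z_1^3]=\sqrt m\,\beta$ and bounded fourth moment, so that $\kappa_3(\Theta_m)=\beta$.
\item Barbour's expansion \cite{B86} gives $|E[h(\Theta_m)]-\Phi(h)+\zeta\Phi(f_h'')|\le C\zeta^2$ uniformly over $h\in\Lambda_2$. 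Hence $\zeta\Phi(f_h'')$ cancels in $E[h(W)]-E[h(\Theta_m)]$.
\item Rio's inequality applied to the pair $(\cL(W),\cL(\Theta_m))$ then yields $\cW_2(\cL(W),\cL(\Theta_m))\le C\{|\zeta|+(\sum_i|\cE_i|)^{1/2}+(|\zeta|\,\cW_2(\cL(W),N(0,1)))^{1/2}\}$.
\item Bobkov's bound (Lemma \ref{lem:bobkov}) gives $\cW_2(\cL(\Theta_m),N(0,1))\le C|\zeta|$.
\item The triangle inequality together with $\sqrt{ab}\le a/(2\ve)+\ve b/2$ absorbs the self-referential term. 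The cases $|\beta|\ge(c_0/2)^{1/2}$ (trivial) and $\beta=0$ (take $\Theta_m\sim N(0,1)$) are handled separately.
\end{itemize}
This mechanism is exactly why $\zeta$ enters linearly while only the $\cE_i$ sit under square roots, and it is absent from your plan.

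\textbf{Your telescoping is shifted by one derivative.} Otherwise the scheme follows Lemma \ref{prop:dec}. With the Stein operator $f'(w)-wf(w)$, $f=f_h$ and $f''$ Lipschitz, it is $f$ itself that must be expanded to second order:
$f(W_{n,m-1})-f(W_{n,m})=f'(W_{n,m})Y_{n,m}+\tfrac12 Y_{n,m}^2f''(W_{n,m})+Y_{n,m}^2\int_0^1\delta_f(W_{n,m}+uY_{n,m},W_{n,m})(1-u)\,du$.
Your first-order expansion of $f'$ produces neither the $\tfrac12 Y_nY_{n,m}^2$ term that feeds $\zeta$ nor a remainder of the form $\cE_1$. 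At your level, a $\theta_f$ with $|\theta_f(x,y)|\le C_1|x-y|^2$ would also require $f'''$ to be Lipschitz. In the paper it is simply $\theta_f(x,y)=f'(x)-f'(y)-(x-y)f''(x)$.
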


	\subsection{Proof of Proposition \ref{prop:general_w2}}
	
	Throughout the remainder of this section, $C, C_1, C_2,\ldots$ denote positive absolute constants whose values
	may vary from line to line.

    The proof of Proposition \ref{prop:general_w2} builds on the approach developed in
    \cite{F19}, where a rate of convergence in the $\cW_2$ metric in the CLT was established
    for locally dependent summands. We begin by recalling the definition of the Zolotarev ideal metrics. We then record two key preliminary results needed for the proof: an inequality due to Rio relating the Wasserstein-$p$ metrics to the Zolotarev ideal metrics, and a $\cW_p$ error bound in the CLT for sums of independent random variables.
	
	\begin{definition} For $p > 1$ let $\ell_p = \lceil p \rceil - 1$ 
		and denote by 
		$\Lambda_p$ the 
		family of all $f \in C^{ \ell_p }(\bR, \bR)$
		such that 
		$$|f^{(\ell_p)}(x) -  f^{(\ell_p)}(y)| \le |x - y|^{p - \ell_p} \quad \forall (x,y) \in \bR^2.$$ 
		The Zolotarev ideal metric of order $p$ is defined by
		$$
		Z_p(P, Q) = \sup_{f \in \Lambda_p} \biggl\{  \int_{\bR} f \, d P - 
		\int_{\bR} f \, d Q \biggr\}
		$$
		for any two Borel probability measures $P$ and $Q$ on $\bR$.
	\end{definition}

	\begin{lemma}[Theorem 3.1 of \cite{R09}]\label{lem:rio} 
		For any $p > 1$ 
		there exists a positive constant $C_p$ such that 
		for any Borel probability measures $P$ and $Q$ on $\bR$
		with finite absolute moments of order $p$, 
		\begin{align*}
			\cW_p(P, Q) \le C_p ( Z_p(P, Q) 
			)^{\frac1p}.
		\end{align*}
	\end{lemma}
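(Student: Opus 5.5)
By homogeneity we may normalize. If $X\sim P$ and $Y\sim Q$, then for $t>0$ the laws of $tX,tY$ satisfy $Z_p(\cL(tX),\cL(tY))=t^pZ_p(P,Q)$, because $f\in\Lambda_p$ if and only if $t^{-p}f(t\,\cdot)\in\Lambda_p$. Likewise $\cW_p(\cL(tX),\cL(tY))=t\,\cW_p(P,Q)$. Both sides of the claimed inequality therefore scale linearly in $t$. If $Z_p(P,Q)=0$, then $P=Q$, since $\Lambda_p$ contains multiples of every $C_c^\infty$ function, and there is nothing to prove. Otherwise we rescale and assume $Z_p(P,Q)=1$; it then suffices to show $\cW_p(P,Q)\le C_p$.

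\textbf{Step 1: smoothing.} Let $\gamma$ be a fixed smooth symmetric law with all moments finite, for instance $N(0,1)$. Let $\xi\sim\gamma$ be independent of $X$ and $Y$. Coupling $X+\xi$ with $X$ and $Y+\xi$ with $Y$ through the same $\xi$ gives
\[
\cW_p(P,Q)\le \cW_p(P*\gamma,Q*\gamma)+2\|\xi\|_{L^p}.
\]
The second term is an absolute constant. Moreover $Z_p$ is ideal, so $Z_p(P*\gamma,Q*\gamma)\le Z_p(P,Q)=1$: if $f\in\Lambda_p$, then $x\mapsto E f(x+\xi)\in\Lambda_p$. We may therefore assume that $P$ and $Q$ have smooth distribution functions $F,G$, while keeping $Z_p\le1$.

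\textbf{Step 2: a one-dimensional $L^1$ bound.} On $\bR$ the optimal coupling is the quantile coupling, so $\cW_p^p(P,Q)=\int_0^1|F^{-1}(u)-G^{-1}(u)|^p\,du$. A layer-cake computation (Ebralidze's inequality) then gives
\[
\cW_p^p(P,Q)\le p\,2^{p-1}\int_{\bR}|x|^{p-1}|F(x)-G(x)|\,dx .
\]
The task is thus to show that the weighted $L^1$ norm on the right is at most $C_p$ whenever $Z_p\le 1$.

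\textbf{Step 3: dyadic duality.} Partition $\bR$ into $I_0=[-1,1]$ and the shells $I_j=\{2^{j-1}<|x|\le2^j\}$. On each piece, $\int_{I_j}|x|^{p-1}|F-G|\,dx=\int_{I_j}|x|^{p-1}s_j(x)(F-G)(x)\,dx$, where $s_j=\operatorname{sign}(F-G)$. We replace $s_j$ by a smooth $[-1,1]$-valued function $g_j$ supported near $I_j$. Setting $f_j(y)=\int_{y}^{\infty}|x|^{p-1}g_j(x)\,dx$, we have
\[
\int|x|^{p-1}g_j(F-G)\,dx=\int f_j\,dQ-\int f_j\,dP .
\]
Hence each shell contributes at most $\|f_j\|_{\Lambda_p}\,Z_p(P,Q)$, where $\|\cdot\|_{\Lambda_p}$ is the $(p-\ell_p)$-H\"older seminorm of the $\ell_p$-th derivative.

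\textbf{Main obstacle.} The difficulty is that $\|f_j\|_{\Lambda_p}$ blows up when $g_j$ oscillates, i.e.\ when $F-G$ changes sign often. A naive estimate gives no summable bound over $j$. To handle this, I would first trade oscillation for scale: replace $g_j$ by a mollification at scale $h$. One gets $\|f_j\|_{\Lambda_p}\lesssim 2^{j(p-1)}h^{1-p}$, at the cost of an error $\int_{I_j}|x|^{p-1}\,|F-G-(F-G)*\rho_h|\,dx$. This error is a modulus-of-continuity term, and it is controlled because $F,G$ are smooth after Step 1. Optimizing $h$ shell by shell is the natural route.

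The decisive issue is to make the shell contributions summable in $j$. I expect to use the tails: since $Z_p\le1$ and $P,Q$ have finite $p$-th moments, $\int|F-G|\,|x|^{p-1}$ restricted to $I_j$ decays. Concretely, testing against $f(y)=(|y|-2^{j-2})_+^{p}$, suitably smoothed, yields a bound $\bigl|E(|X|-a)_+^p-E(|Y|-a)_+^p\bigr|\le C_p$. I anticipate this step will need Rio's more careful scheme of comparing $F^{-1}$ and $G^{-1}$ via such convex-order type test functions, rather than a crude termwise sum. Once that is done, combining Steps 1–3 gives $\cW_p(P,Q)\le C_p$, and undoing the scaling yields the lemma.
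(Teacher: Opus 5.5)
The paper does not prove this lemma; it is quoted from Rio's Theorem 3.1. Your proposal has a genuine gap, and it goes beyond the fact that your ``Main obstacle'' paragraph defers the decisive estimate to ``Rio's more careful scheme'': the statement you reduce to in Step 2 is false. Your scaling normalization, the unit-scale smoothing in Step 1 (which costs only an additive constant once $Z_p=1$), and Ebralidze's inequality are all correct. The failure is the target ``$\int_{\bR}|x|^{p-1}|F-G|\,dx\le C_p$ whenever $Z_p(P,Q)\le 1$''. Both $Z_p$ and $\cW_p$ are translation invariant, since $f\in\Lambda_p$ iff $f(\cdot+a)\in\Lambda_p$, whereas the weight $|x|^{p-1}$ is anchored at the origin.

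Concretely, take any $P_0\neq Q_0$ with finite $p$-th moments and $Z_p(P_0,Q_0)=1$. Let $P_a,Q_a$ be the translates by $a$ of $P_0*\gamma$ and $Q_0*\gamma$, with distribution functions $F_a,G_a$, and set $\Delta=F_0-G_0$ (the case $a=0$). Then $P_a,Q_a$ are smooth, $Z_p(P_a,Q_a)\le 1$, and $\cW_p(P_a,Q_a)\le \cW_p(P_0,Q_0)$ for all $a$. Yet
\[
\int_{\bR}|x|^{p-1}|F_a(x)-G_a(x)|\,dx=\int_{\bR}|y+a|^{p-1}|\Delta(y)|\,dy\ \ge\ (a/2)^{p-1}\int_{|y|\le a/2}|\Delta(y)|\,dy\longrightarrow\infty ,
\]
because $\int_{\bR}|\Delta|\,dy=\cW_1(P_0*\gamma,Q_0*\gamma)>0$ (Gaussian convolution is injective). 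A simple instance for $p=2$: $P=\tfrac12(\delta_{a-1}+\delta_{a+1})$ and $Q=\delta_a$ give $Z_2=\tfrac12$ and $\cW_2=1$, but $\int|x|\,|F-G|\,dx=a$. Recentering the weight does not help either. Mix the pairs translated to $+a$ and $-a$: this keeps $Z_p\le 1$ and $\cW_p$ bounded, by linearity and convexity. But $\int|x-b|^{p-1}|F-G|\,dx\gtrsim a^{p-1}$ for every center $b$ once $a$ is large.

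The same example shows why your Step 3 heuristics cannot close the argument. Finite $p$-th moments do force $F-G$ to decay over the shells $I_j$, but with no constant controlled by $Z_p$; above, the whole discrepancy sits in the shell with $2^j\approx a$. Correspondingly, the duality bound $2^{j(p-1)}h^{1-p}$ grows with $j$. So does the mollification error, which after unit smoothing you can only bound by $O(h\,2^{jp})$ on $I_j$, since $F-G$ is merely Lipschitz. Their sum grows with $j$ for every choice of $h=h_j$. The tail test functions $(|y|-a)_+^p$ only control differences of tail moments, not $\int_{I_j}|x|^{p-1}|F-G|\,dx$.

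What is missing is a translation-invariant, genuinely local mechanism. The natural weight is the local transport displacement $|x-G^{-1}(F(x))|$, not $|x|$. Supplying such a mechanism is the nontrivial content of the cited theorem, and your proposal does not provide it.
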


	\begin{lemma}[Theorem 1.1 of \cite{B18}]\label{lem:bobkov} Let $\Theta_n = 
		\sum_{i=1}^n \xi_i$ be a sum of 
		independent random variables 
		$\xi_1,\ldots, \xi_n$ with 
		$E(\xi_i) = 0$ and $E(\Theta_n^2) = 1$. 
		Then for any real $p \ge 1$,  
		\begin{align*}
			\cW_p ( \cL(\Theta_n), N(0,1)) \le 
			C_p \biggl[ \sum_{i=1}^n 
			E( |\xi_i|^{p+2} ) \biggr]^{\frac1p},
		\end{align*}
		where $C_p$ depends continuously on $p$.
	\end{lemma}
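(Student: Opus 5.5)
Since the statement is quoted from \cite{B18}, the plan below is a reconstruction of how I would prove it, not a summary of Bobkov's argument. Write $L=\sum_{i=1}^n E|\xi_i|^{p+2}$, let $F$ be the distribution function of $\Theta_n$ and $\Phi$ the standard normal one, with $\varphi=\Phi'$. In dimension one the quantile coupling is optimal, so
\begin{align*}
\cW_p^p(\cL(\Theta_n),N(0,1))=\int_0^1 |F^{-1}(u)-\Phi^{-1}(u)|^p\,du ,
\end{align*}
and the whole proof is an estimate of this integral.

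\textbf{Large $L$.} If $L\ge c_p$ for a small constant $c_p$, a crude bound is enough. By the triangle inequality, $\cW_p\le \|\Theta_n\|_{L^p}+\|G\|_{L^p}$ with $G\sim N(0,1)$. Rosenthal's inequality gives $\|\Theta_n\|_{L^p}^p\le C_p(1+\sum_i E|\xi_i|^p)$. Interpolating between $\sum_i E\xi_i^2=1$ and $L$ gives $\sum_i E|\xi_i|^p\le L^{(p-2)/p}$ when $p\ge 2$; for $p<2$ the sum is bounded by $1$ via Lyapunov. Hence $\cW_p\le C_p\le C_p' L^{1/p}$. We may therefore assume $L$ is small.

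\textbf{Moment interpolation.} Log-convexity of $q\mapsto \sum_i E|\xi_i|^q$ on $[2,p+2]$ gives the key input
\begin{align*}
L_3:=\sum_i E|\xi_i|^3\le L^{1/p},
\end{align*}
which holds for $p\ge 1$. So the Berry--Esseen quantity is already of the target size $L^{1/p}$. The strategy is to control the bulk of the quantile integral with Berry--Esseen-type bounds at level $L_3$, and the tails with the $(p+2)$-moments.

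\textbf{Bulk and tails.} Take $u$ with $|\Phi^{-1}(u)|\le R$, where $R$ is of order $\sqrt{\log(1/L)}$.
\begin{enumerate}[(i)]
\item In the bulk I would use the nonuniform Berry--Esseen bound $|F(x)-\Phi(x)|\le C L_3(1+|x|)^{-3}$. Combined with the inversion estimate $|F^{-1}(u)-\Phi^{-1}(u)|\lesssim |F-\Phi|/\varphi$ along the relevant segment, this gives a pointwise quantile error of order $L_3$ times a Gaussian weight. The weight is not harmless: $\varphi^{-1}$ grows like $e^{x^2/2}$, so the plain $(1+|x|)^{-3}$ decay does not beat it.
\item To handle this, the bulk must be split further. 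Where $L_3/\varphi(x)\le 1$ the linearisation is valid and contributes $O(L_3^p)=O(L)$. Beyond that, for $|x|\ge \sqrt{2\log(1/L_3)}$, I would instead bound $\int |F^{-1}|^p$ and $\int|\Phi^{-1}|^p$ over the tail quantile range separately.
\item For the tails of $F^{-1}$, a Fuk--Nagaev-type inequality gives $P(|\Theta_n|>x)\le e^{-cx^2}+C\,L\,x^{-(p+2)}$. The polynomial part contributes $\int_{x\ge R} x^{p-1}\,L\,x^{-(p+2)}\,dx\lesssim L$. The Gaussian part contributes $R^{p}e^{-cR^2}$, which is $\lesssim L$ for the chosen $R$ up to the constant in the exponent.
\end{enumerate}

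\textbf{Main obstacle.} The matching of bulk and tails is the hard step. At the bulk/tail transition, the nonuniform Berry--Esseen decay $(1+|x|)^{-3}$ is too weak against the Gaussian density. Plain Berry--Esseen costs a factor $\log(1/L)$ there, so it yields $L^{1/p}$ only up to logarithms. To remove the logarithm I would replace Berry--Esseen by a moderate-deviation (Cram\'er-type) relative error estimate
\begin{align*}
\frac{1-F(x)}{1-\Phi(x)}=1+O\bigl(L_3(1+|x|)^3\bigr)\quad\text{for }|x|\le c\,L_3^{-1/3},
\end{align*}
after truncating each $\xi_i$ at level of order $1$. The truncation error is controlled by $\sum_i E|\xi_i|^{p+2}$. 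In quantile form this relative bound gives $|F^{-1}(u)-\Phi^{-1}(u)|\lesssim L_3(1+|\Phi^{-1}(u)|)^2$. Raising to the power $p$ and integrating against $du$ then gives $O(L_3^p)=O(L)$ by Gaussian moments. Summing the bulk and tail contributions gives $\cW_p\le C_pL^{1/p}$. Continuity of $C_p$ in $p$ follows because every constant used (Rosenthal, Fuk--Nagaev, the Cram\'er-type estimate) depends continuously on $p$.
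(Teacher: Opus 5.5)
The paper does not prove this lemma; it imports it verbatim from Bobkov \cite{B18}, so your plan is an independent reconstruction. Much of the scaffolding is sound: the quantile coupling, the large-$L$ reduction, the interpolation $L_3\le L^{1/p}$, and a truncation at level $1$ costing $\cW_p\lesssim L^{1/p}$ (via Rosenthal for $p\ge2$ and von Bahr--Esseen for $p<2$). \textbf{The gap:} the step everything hinges on is false as stated. After truncation at level of order $1$ you cannot have $\frac{1-F(x)}{1-\Phi(x)}=1+O\bigl(L_3(1+|x|)^3\bigr)$ on $|x|\le cL_3^{-1/3}$. Take $\xi_1=\pm1$ with probability $\ve/2$ each (and $0$ otherwise), and $\xi_2,\dots,\xi_n$ i.i.d.\ $N(0,(1-\ve)/(n-1))$ with $n$ large. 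Then $L\approx L_3\approx\ve$, and truncation leaves $\xi_1$ untouched. Since $P(\Theta_n>x)\ge\frac{\ve}{2}P(N(0,1-\ve)>x-1)$, the ratio is of order $\ve e^{x}$, which is of order $\ve^{-1}$ at $x=2\log(1/\ve)$. That point lies deep inside your range $|x|\le c\ve^{-1/3}$. In general, a single summand of size $\tau$ spoils the ratio once $\tau x\gtrsim\log(1/L)$.

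This step is also not a logarithmic refinement, contrary to your ``Main obstacle'' paragraph. If relative accuracy stops at $|x|\approx\sqrt{2\log(1/L_3)}$, the leftover quantile range has mass about $L_3$. That can be as large as $L^{1/p}\gg L$; in the i.i.d.\ case it is $n^{-1/2}$ versus $n^{-p/2}$. So the loss is polynomial. You genuinely need relative accuracy out to $\sqrt{2\log(1/L)}$, and in fact out to $K\sqrt{\log(1/L)}$ with $K^2c_p>1$ to absorb the weak Fuk--Nagaev exponent.

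What is true, and suffices, is a weaker estimate on the range actually needed. Let $\Theta'$ be the sum of the summands truncated at level $1$, recentred and renormalised. Use exponential tilting with parameter $h\le K\sqrt{\log(1/L)}$, followed by Berry--Esseen under the tilted law, whose third moments are $\lesssim e^{Ch}L_3$. This gives
\begin{align*}
\frac{P(\Theta'>x)}{1-\Phi(x)}=1+O\bigl(L_3(1+|x|)^3e^{C|x|}\bigr),\qquad |x|\le K\sqrt{\log(1/L)},
\end{align*}
for $L$ small depending on $p$. The extra factor $e^{C|x|}$ is harmless because $E e^{pC|G|}<\infty$ for $G\sim N(0,1)$, so the bulk quantile integral is still $O(L_3^p)=O(L)$. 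The region beyond $K\sqrt{\log(1/L)}$ is then handled by Fuk--Nagaev applied to $\Theta'$, which still satisfies $\sum_i E|\xi_i'|^{p+2}\lesssim L$. With bulk and tail both treated for $\Theta'$, the truncation error is paid only once. With this replacement your argument closes, but as written it rests on a false lemma.
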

	
	Given $h : \bR \to \bR$, 
	we denote by $\Phi(h)$ the expectation of $h$ with respect to $N(0,1)$. We consider
	the Stein 
	equation for $N(0,1)$:
	\begin{align}\label{eq:se}
		f'(w) - wf(w) = h(w) - \Phi(h),
	\end{align}
    which is defined whenever $\Phi(h)$ is.
	Let $p > 1$ be an integer. It follows from \cite[Lemma 6]{B86} that,  if $h \in \Lambda_p$, then 
	\begin{align}\label{eq:f_h}
		f_h(x) = \int_{-\infty}^x e^{ \frac12 (x^2 - t^2 )  } (h(t)  - \Phi(h) ) \, dt 
	\end{align}
	is a solution to \eqref{eq:se}, $f_h$ is $p$ times differentiable 
	and satisfies 
	\begin{align}\label{eq:f2_lip}
		| f_h^{(p)}(x) - f_h^{(p)}(y) | \le C_p |x-y| \quad \forall x,y \in \bR,
	\end{align}
	where $C_p > 0$ is a constant depending only on $p$. In particular, we have 
	\begin{align}
	\sup_{h \in \Lambda_2}	| E[ h (W) ] - \phi(h)  | \le \sup_{ f \in \fF_2(C_0) } |  E[ f'(W)  - W f(W) ]  |,
	\end{align}
	for some constant $C_0 > 0$.
	
	We derive an expansion for $E[h(W)]$ that serves as a counterpart to \cite[Proposition 3.1]{F19} for weakly dependent sequences, including those arising from expanding/hyperbolic dynamical systems and those satisfying $\phi$ or $\psi$-mixing conditions.
	
	\begin{lemma}\label{prop:dec} Let $h \in \Lambda_2$.
		If $f = f_h$ is the solution \eqref{eq:f_h} to the Stein equation 
		\eqref{eq:se}, then there exists an absolute constants $C> 0$ such that 
		\begin{align}\label{eq:expansion_w}
			|   E[h(W)] - \Phi(h) +  \zeta \Phi(f'') | 
			\le \sum_{i=1}^{6} 
			|\cE_i| + C  |\zeta| \cW_2( \cL(W), N(0,1) ),
		\end{align}
	where $\cE_1,\ldots, \cE_6$ and $\zeta$ are defined as in Proposition \ref{prop:general_w2}.
	\end{lemma}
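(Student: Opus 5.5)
We start from the Stein identity: $E[h(W)]-\Phi(h)=E[f'(W)-Wf(W)]$ with $f=f_h$, where $f''$ is $C_0$-Lipschitz by \eqref{eq:f2_lip}. We split $E[Wf(W)]=\sum_n E[Y_nf(W)]$ and telescope $f(W)$ along the shells $Y_{n,m}$ given by \eqref{eq:relations_w}:
\begin{align*}
f(W)=f(W_{n,N-1})+\sum_{m=0}^{N-1}\bigl[f(W_{n,m-1})-f(W_{n,m})\bigr].
\end{align*}
Since $W_{n,N-1}=0$ and $E[Y_n]=0$, the first term drops out. Each increment is Taylor-expanded to second order around $W_{n,m}$:
\begin{align*}
f(W_{n,m})+Y_{n,m}f'(W_{n,m})+\tfrac12Y_{n,m}^2f''(W_{n,m})+Y_{n,m}^2\int_0^1(1-u)\,\delta_f(W_{n,m}+uY_{n,m},W_{n,m})\,du.
\end{align*}
The remainder summed against $Y_n$ is exactly $\cE_1$. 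The quadratic term $\frac12E[Y_nY_{n,m}^2f''(W_{n,m})]$ is handled by writing $f''(W_{n,m})=f''(W)+\delta_f(W_{n,m},W)$ and telescoping $\delta_f(W_{n,m},W)$ along the blocks $W_{n,m(k-1)}\to W_{n,mk}$. Centering each piece produces the covariance-type terms $\cE_6$ together with a mean part. The term with $f''(W)$ is then replaced by $E[f''(W)]$, and the resulting $E[f''(W)]-\Phi(f'')$ is controlled by $\|f'''\|_\infty\cW_1\le C\cW_2$. This gives the $\frac12\sum Y_n(Y_{n,m})^2$ part of $\zeta$ and the $C|\zeta|\cW_2$ error.

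For the linear term $\sum_{n,m}E[Y_nY_{n,m}f'(W_{n,m})]$ we compare with $E[f'(W)]=\sum_{n,m}E[Y_nY_{n,m}]E[f'(W)]$, using $E[W^2]=1$. We decompose
\begin{align*}
E[Y_nY_{n,m}f'(W_{n,m})]=E[Y_nY_{n,m}]E[f'(W_{n,m})]+E[\overline{Y_nY_{n,m}}\,f'(W_{n,m})].
\end{align*}
For the first part, $f'(W_{n,m})-f'(W)$ is expanded to second order. Its quadratic remainder is $\theta_f(W_{n,m},W)$ with $|\theta_f|\le C_1|x-y|^2$, which gives the first sum in $\cE_2$. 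The remaining first-order pieces $E[f''(\cdot)(W-W_{n,m})]$ are telescoped over blocks and rewritten through $\delta_f$ terms, giving the other two sums in $\cE_2$, plus mean contributions that feed $\zeta$.

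For the covariance part $E[\overline{Y_nY_{n,m}}f'(W_{n,m})]$ we split into $m=0$ and $m\ge1$. We telescope $f'(W_{n,m})$ outward along the scales $W_{n,m(k-1)}\to W_{n,mk}$ for $m\ge1$, and $W_{n,k-1}\to W_{n,k}$ for $m=0$, and Taylor-expand each increment to first order. Around the base point $W_{n,2mk}$ this gives
\begin{align*}
f''(W_{n,2mk})\,Y_{n,\langle m(k-1),mk\rangle}+\int_0^1\delta_f\bigl(W_{n,mk}+uY_{n,\langle m(k-1),mk\rangle},W_{n,2mk}\bigr)Y_{n,\langle m(k-1),mk\rangle}\,du.
\end{align*}
After centering, the second piece produces $\cE_3$. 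We then write $f''(W_{n,2mk})$ once more as a telescope over $p\ge3$ at scales $mkp$, ending at $f''(W)$. Centered products give $\cE_4$, and products of expectations give $\cE_5$. The leftover $f''(W)$ is replaced by $\Phi(f'')$ at cost $C\cW_2$, which yields the first two sums in $\zeta$. Condition \eqref{d cond} ensures every telescope terminates at $W_{n,N-1}=0$ or at $W$, so all sums are finite.

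The \textbf{main obstacle} is bookkeeping. Each telescoping must be arranged so that the centered pieces come out exactly in the form $\overline{\delta_f(\cdot)}\cdot(\text{product of }Y\text{'s})$ of $\cE_3$ through $\cE_6$, so that the non-centered leftovers collapse exactly to $\zeta\,E[f''(W)]$. We also need to check signs and the factors $\tfrac12$ carefully.
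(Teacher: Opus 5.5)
Your architecture matches the paper's, but you bring in $f''(W)$ in the wrong place, and as written this leaves an error term that is neither one of the $\cE_i$ nor small. The matching parts are: shell telescoping with remainder $\cE_1$; the mean/covariance split of the linear term with the $\theta_f$ remainder; the scale-$mk$ telescope around $W_{n,2mk}$ giving $\cE_3$; the further telescope at scales $mkp$; and the final bound $|E[f''(W)]-\Phi(f'')|\le C\cW_1\le C\cW_2$.

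The problem is this. In the quadratic term you write $f''(W_{n,m})=f''(W)+\delta_f(W_{n,m},W)$ pathwise, and then replace $f''(W)$ by $E[f''(W)]$. That replacement costs
\[
\tfrac12\sum_{n,m}E\bigl[Y_nY_{n,m}^2\,\overline{f''(W)}\bigr],
\]
and there is no decorrelation to exploit, because $f''(W)$ depends on $Y_n$ and $Y_{n,m}$ themselves. For i.i.d.\ summands with $d(i,n)=|i-n|$ you have $\sum_{n,m}Y_nY_{n,m}^2=W\sum_iY_i^2+2\sum_n\sum_{m\ge1}Y_{n-m}Y_nY_{n+m}$. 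So this term is asymptotically $\tfrac12E[Wf''(W)]$, which is of order one in general. Two related directional errors:
\begin{itemize}
\item The blocks $W_{n,m(k-1)}\to W_{n,mk}$ ($k\ge2$) run from $W_{n,m}$ outward to $W_{n,m(N-1)}=0$, away from $W=W_{n,-1}$. They therefore cannot telescope $\delta_f(W_{n,m},W)$.
\item Your $p$-telescope ends at $W_{n,mk(N-1)}=0$, not at $f''(W)$. Also, only the number $E[f''(W)]$, not the random variable $f''(W)$, can be traded for $\Phi(f'')$.
\end{itemize}

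The missing idea is to center first and compare with $W$ only inside expectations. For the quadratic term:
\begin{itemize}
\item Write $Y_nY_{n,m}^2f''(W_{n,m})=Y_nY_{n,m}^2\,\overline{f''(W_{n,m})}+E[f''(W_{n,m})]\,Y_nY_{n,m}^2$.
\item Telescope $\overline{f''(W_{n,m})}=\overline{f''(W_{n,m})}-\overline{f''(0)}$ outward in $k$. This gives the first two sums of $\cE_6$.
\item Split only the constant, $E[f''(W_{n,m})]=E[\delta_f(W_{n,m},W)]+E[f''(W)]$. This gives the third sum of $\cE_6$ and the $\tfrac12$-part of $\zeta$.
\end{itemize}
Treat $f''(W_{n,2mk})$ in $E[\overline{f''(W_{n,2mk})Y_{n,\langle m(k-1),mk\rangle}}\,Y_{n,m}Y_n]$ the same way. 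The centered $p$-telescope together with the terms $E[\delta_f(W_{n,2mk},W)]$ make up $\cE_4$. In the product-of-expectations piece, $f''(W_{n,2mk})$ may be centered for free because $E[Y_{n,\langle m(k-1),mk\rangle}]=0$; telescoping it gives $\cE_5$, which enters with a minus sign.

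The mean part of the linear term contributes nothing to $\zeta$. Using $E[Y_j]=0$, subtract $f''(0)$ and telescope $E[f''(W_{n,m})Y_j]$ outward to $W_{n,N-1}=0$. This gives exactly the last two sums of $\cE_2$, so all of $\zeta$ comes from the covariance part of the linear term and from the quadratic term.

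This ordering is what produces the $\cE_i$ as defined: covariances between functions of distant and nearby variables, or products of expectations. Your ordering instead leaves a non-negligible covariance with $f''(W)$.
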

	
	\begin{proof}
		We denote by $U$ a random variable with uniform distribution on $[0,1]$ such that $U$ is independent of all the 
		other variables. 

		Recall that, for $h \in \Lambda_2$, the function $f = f_h$ in \eqref{eq:f_h} 
		satisfies \eqref{eq:f2_lip} with $p = 2$.
		Applying Taylor's theorem and using \eqref{eq:relations_w}, we obtain
		\begin{align*}
			&f(W_{n,m-1}) - f(W_{n,m}) \\
			&= f'( W_{n,m} ) Y_{n,m} 
			+ (Y_{n,m})^2 
			\int_0^1 f''( W_{n,m} + uY_{n,m}
			) (1-u) \, du \\ 
			&= f'( W_{n,m} ) Y_{n,m} 
			+ (Y_{n,m})^2 
			\int_0^1 \biggl[ f''( W_{n,m} + u Y_{n,m}
			 ) - 
			f''( W_{n,m} ) \biggr] (1-u) \, du \\  
			&+ \frac12 (Y_{n,m})^2 f''( W_{n,m} ).
		\end{align*}
		Since $W_{n, -1} = W$, $W_{n, N-1} = 0$ and 
		$E(Y_n) = 0$, we arrive 
		at the following decomposition: 
		\begin{align*}
			E[ W f(W) ]  
			&=  E \biggl\{  \sum_{n=0}^{N-1}  
			Y_n ( f(W_{n, -1}) - f(W_{n,N-1})  ) \biggr\} 
			=  E \biggl\{  \sum_{n=0}^{N-1} 
			\sum_{m=0}^{N-1} 
			Y_n ( f(W_{n,m-1}) - f(W_{n,m})) \biggr\} \\ 
			&=  E \biggl\{  \sum_{n=0}^{N-1} 
			\sum_{m=0}^{N-1}  
			f'( W_{n,m} ) Y_{n,m} Y_n \biggr\} 
			+ \frac12 
			E \biggl\{
			\sum_{n=0}^{N-1} 
			\sum_{m=0}^{N-1} Y_n (Y_{n,m})^2  
			f''( W_{n,m} )  \biggr\} \\ 
			&\qquad+  E \biggl\{ \sum_{n=0}^{N-1} 
			\sum_{m=0}^{N-1}   Y_n (Y_{n,m})^2 
			\biggl[ f''( W_{n,m} + UY_{n,m}
			 ) - 
			f''( W_{n,m} ) \biggr] (1-U) 
			\biggr\} \\
		&=: S + S' + \cE_1. 
		\end{align*}
		
		\noindent\textbf{Decomposition of
			$S$}: We start by writing
		$$
		S = E \biggl\{  \sum_{n=0}^{N-1} 
		\sum_{m=0}^{N-1} \sum_{i=1}^3 R_i(n,m) \biggr\}, 
		$$
		where 
		\begin{align*}
			R_1(n,m) &=  \overline{f'( W_{n,m} )} Y_{n,m} Y_n, \quad 
			R_2(n,m) =  E[f'( W_{n,m} ) - f'(W) ] Y_{n,m} Y_n,  \\ 
			R_3(n,m) &= E[ f'(W) ] Y_{n,m} Y_n.
		\end{align*}
		
		An application of
		Taylor's theorem combined with 
		\eqref{eq:f2_lip} and 
		\eqref{eq:relations_w} yields
		\begin{align*}
			R_2(n,m) &= 
			E[  (W_{n,m} - W) f''( W_{n,m} ) ] Y_{n,m} Y_n + E[ \theta_f(  W_{n,m} , W  )] Y_{n,m} Y_n \\ 
			&=  -  \sum_{j \in [n]_m} E( f''( W_{n,m} ) Y_j ) Y_{n,m} Y_n
			+ E [  \theta_f(  W_{n,m} , W  ) ] 
			Y_{n,m} Y_n,
		\end{align*}
		where $\theta_f(x,y)$ is a measurable function with $|\theta_f(x,y)| \le C_1 |x-y|^2$ for some absolute constant $C_1 > 0$
		independent of $f \in \fF_2(C_0)$.
		
		Consequently,
		\begin{align*}
			E \biggl\{ \sum_{n=0}^{N-1} \sum_{m=0}^{N-1} R_2(n,m) \biggr\} 
			=  &- E \biggl\{ \sum_{n=0}^{N-1} \sum_{m=0}^{N-1} 
			\sum_{j \in [n]_m}  E( f''( W_{n,m} ) Y_j )  Y_{n,m} Y_n   
			\biggr\} + E_{1}
		\end{align*}
		where
		$$
		E_{1} = E \biggl\{ \sum_{n=0}^{N-1} \sum_{m=0}^{N-1}  
		E[ \theta( ( W_{n,m} - W )^2 )] 
		Y_{n,m} Y_n \biggr\}.
		$$
		Further, using  $E(Y_n) = 0$ and
		$W_{n,k} = 0$ for $k \ge N - 1$, we decompose as follows:
		\begin{align*}
			&E \biggl\{ \sum_{n=0}^{N-1} \sum_{m=0}^{N-1} R_2(n,m) \biggr\} \\
			&= E_1-  E \biggl\{  \sum_{n=0}^{N-1}
			E[ ( f''( W_{n,0} ) - f''(0) ) Y_n ] 
			Y_n^2   \biggr\}   \\ 
			&\qquad-  E \biggl\{ \sum_{n=0}^{N-1} \sum_{m=1}^{N-1} 
			\sum_{j \in [n]_m}  E[  ( f''( W_{n,m} )  - f''(0) ) Y_j  ] Y_{n,m} Y_n   
			\biggr\}   \\ 
			&=  E_1  
			- E \biggl\{  \sum_{n=0}^{N-1} 
			\sum_{k=1}^{ N -1} E[  ( f''( W_{n,k-1} ) - f''(W_{n,k}) ) Y_n ]
			Y_n^2   \biggr\}  \\ 
			&\qquad- E \biggl\{
			\sum_{n=0}^{N-1} \sum_{m=1}^{N-1} 
			\sum_{j \in [n]_m}  \sum_{k=2}^{ N -1}    
			E[ ( f''( W_{n,m(k-1)}) - f''(W_{n, mk} )  )  Y_j  ]
			Y_n Y_{n,m}   \biggr\}   = \cE_2.
		\end{align*}
		
		Recall from \eqref{eq:w_var} that $E(W^2) = 1$. Using this property we have
		\begin{align*}
			E \biggl\{ \sum_{n=0}^{N-1} \sum_{m=0}^{N-1} R_3(n,m)  \biggr\} 
			= E[  E( f'(W) ) W^2  ] =   E( f'(W) ) .
		\end{align*}
		
		We turn to $R_1(n,m)$. First, we first use the trivial identity $\overline{f'(0)} 
		= f'(0) - E( f'(0) )
		= 0$ to write
		\begin{align*}
			&\ E \biggl\{ \sum_{n=0}^{N-1} \sum_{m=0}^{N-1} R_1(n,m)  \biggr\} \\ 
			&=  E \biggl\{ \sum_{n=0}^{N-1} 
			\sum_{m=1}^{N-1} 
			[  \overline{f'( W_{n,m} )} - \overline{f'(0)} ] Y_{n,m} Y_n \biggr\} 
			+  E \biggl\{ \sum_{n=0}^{N-1} [ \overline{f'( 
				W_{n,0} )} - \overline{f'(0)} ]
			Y_n^2 ] \biggr\}  \\ 
			&=  E \biggl\{ \sum_{n=0}^{N-1} 
			\sum_{m=1}^{N-1} \sum_{k=2}^{N-1}
			[ \overline{f'( W_{n,m(k-1)} )}
			- \overline{f'( W_{n,mk} )} ] Y_{n,m} Y_n] \biggr\}  
			\\&+  E \biggl\{  \sum_{n=0}^{N-1} \sum_{k=1}^{N-1} 
			[  \overline{f'( 
				W_{n,k-1} )} - \overline{f'(W_{n,k})} ] 
			Y_n^2  \biggr\} .
		\end{align*}
		Next, 
		using the identity $g(x + a) - g(x) = a \int_0^1  g'( x + at ) \, dt$ together with \eqref{eq:relations_w} on summands 
		of the remaining two terms we obtain
		\begin{align*}
			E \biggl\{\sum_{n=0}^{N-1} \sum_{m=0}^{N-1} R_1(n,m)   \biggr\}  
			&=  E \biggl\{  \sum_{n=0}^{N-1} 
			\sum_{m=1}^{N-1} \sum_{k=2}^{N-1}
			\overline{f''( W_{n,mk} 
				+ U Y_{n, \langle m(k-1), mk \rangle } )
				Y_{n, \langle m(k-1), mk \rangle } } 
			Y_{n,m} Y_n \biggr\}  \notag \\ 
			&\quad + E \biggl\{ \sum_{n=0}^{N-1} \sum_{k=1}^{N-1} 
			\overline{f''( 
				W_{n,k} + UY_{n,k})   Y_{n,k}} 
			Y_n^2  \biggr\}.
		\end{align*}
		By adding and subtracting 
		$f''( W_{n,2mk})$ in the first term, 
		and $f''( W_{n,2k})$ in the second term, we 
		find that
		\begin{align*}
			&E \biggl\{ \sum_{n=0}^{N-1} \sum_{m=0}^{N-1} R_1(n,m)  \biggr\} \\
			&= \cE_3 
			+ E \biggl\{ \sum_{n=0}^{N-1} 
			\sum_{m=1}^{N-1} \sum_{k=2}^{N-1}
			\overline{ f''( W_{n,2mk} )
				Y_{n, \langle m(k-1), mk \rangle } } 
			Y_{n,m} Y_n  \biggl\} 
			+ E \biggl\{ \sum_{n=0}^{N-1} \sum_{k=1}^{N-1} 
			\overline{ 
				f''(W_{n, 2k})   Y_{n,k} } 
			Y_n^2  \biggr\}.
		\end{align*}
		
		In order to deal with the remaining two terms, we perform another centering:
		\begin{align*}
			R&:=E \biggl\{ \sum_{n=0}^{N-1} 
			\sum_{m=1}^{N-1} \sum_{k=2}^{N-1} 
			f''( W_{n, 2mk} ) 
			Y_{n, \langle m(k-1), mk \rangle}  Y_{n,m} Y_n  \biggl\} 
			+ 
			E \biggl\{ \sum_{n=0}^{N-1} \sum_{k=1}^{N-1}  
			f''(W_{n, 2k})   Y_{n,k}  
			Y_n^2    \biggr\} \\ 
			&= E \biggl\{ \sum_{n=0}^{N-1} 
			\sum_{m=1}^{N-1} \sum_{k=2}^{N-1}   \overline{ f''( W_{n, 2mk} ) } 
			Y_{n, \langle m(k-1), mk \rangle}  Y_{n,m} Y_n \biggr\} 
			+ E \biggl\{
			\sum_{n=0}^{N-1} \sum_{k=1}^{N-1}   
			\overline{f''(W_{n, 2k})}   Y_{n,k}  
			Y_n^2 \biggr\}   \\ 
			&+ E \biggl\{\sum_{n=0}^{N-1} 
			\sum_{m=1}^{N-1} \sum_{k=2}^{N-1} 
			E[  f''( W_{n, 2mk} ) ]  
			Y_{n, \langle m(k-1), mk \rangle}  Y_{n,m} Y_n \biggl\} 
			+ 
			E \biggl\{ \sum_{n=0}^{N-1} \sum_{k=1}^{N-1} 
			E [  
			f''(W_{n, 2k})]   Y_{n,k}  
			Y_n^2  \biggr\}.
		\end{align*}
		Exploiting $\overline{f''(0)} = 0$ together with a telescopic summation in the first two terms, 
		and by adding and subtracting $f''(W)$ in the last two terms, we see that 
		\begin{align*}
			R  &=  E \biggl\{ \sum_{n=0}^{N-1} 
			\sum_{m=1}^{N-1} \sum_{k=2}^{N-1} 
			\sum_{p=3}^{N-1}   [ \overline{  
				f''( W_{n, mk (p-1)} ) } 
			- \overline{f''( W_{n, mkp}} )  ]
			Y_{n, \langle m(k-1), mk \rangle}  Y_{n,m} Y_n \biggl\} 
			 \\
			&+ 
			E \biggl\{ \sum_{n=0}^{N-1} \sum_{k=1}^{N-1} \sum_{p=3}^{N-1} 
			[ \overline{f''(W_{n, k(p-1)})}  
			- 
			\overline{f''(W_{n, kp})} ]   Y_{n,k}  
			Y_n^2  \biggl\}   \\ 
			&+ E \biggl\{  \sum_{n=0}^{N-1} 
			\sum_{m=1}^{N-1} \sum_{k=2}^{N-1} 
			E[  f''( W_{n, 2mk} ) - f''(W) ]  
			Y_{n, \langle m(k-1), mk \rangle}  Y_{n,m} Y_n \biggr\} 
			 \\ 
			&+ E \biggl\{ 
			\sum_{n=0}^{N-1} \sum_{k=1}^{N-1} 
			E [  
			f''(W_{n, 2k}) - f''(W)]  Y_{n,k}  
			Y_n^2  \biggr\}  \\
			&+ E \biggl\{  \sum_{n=0}^{N-1} 
			\sum_{m=1}^{N-1} \sum_{k=2}^{N-1} 
			E [ f''(W) ]  
			Y_{n, \langle m(k-1), mk \rangle}  Y_{n,m} Y_n \biggr\}  + 
			E \biggl\{ \sum_{n=0}^{N-1} \sum_{k=1}^{N-1} 
			E[ f''(W)  ]   Y_{n,k}  
			Y_n^2  \biggr\} \\
			&= \cE_4 + R',\\
		\end{align*}
		where 
		\begin{align*}
			R' = E \biggl\{  \sum_{n=0}^{N-1} 
			\sum_{m=1}^{N-1} \sum_{k=2}^{N-1} 
			E [ f''(W) ]  
			Y_{n, \langle m(k-1), mk \rangle}  Y_{n,m} Y_n \biggr\}  + 
			E \biggl\{ \sum_{n=0}^{N-1} \sum_{k=1}^{N-1} 
			E[ f''(W)  ]   Y_{n,k}  
			Y_n^2  \biggr\}.
		\end{align*}
		Let
		\begin{align*}
			\bar{R} 
			= E \biggl\{ \sum_{n=0}^{N-1} 
			\sum_{m=1}^{N-1} \sum_{k=2}^{N-1} 
			E[  f''( W_{n, 2mk} ) 
			Y_{n, \langle m(k-1), mk \rangle} ] 
			Y_{n,m} Y_n  \biggl\} 
			+ 
			E \biggl\{ \sum_{n=0}^{N-1} \sum_{k=1}^{N-1} 
			E [  
			f''(W_{n, 2k})   Y_{n,k} ] \ 
			Y_n^2  \biggl\}.
		\end{align*}
		As in the case of $R$, we obtain
	
		\begin{align*}
			\bar{R} 
			&= E \biggl\{ \sum_{n=0}^{N-1} 
			\sum_{m=1}^{N-1} \sum_{k=2}^{N-1} 
			E[  \overline{ f''( W_{n, 2mk} ) } 
			Y_{n, \langle m(k-1), mk \rangle}  ]
			Y_{n,m} Y_n \biggl\} 
			+ 
			E \biggl\{  \sum_{n=0}^{N-1} \sum_{k=1}^{N-1} 
			E [  
			\overline{f''(W_{n, 2k})}   Y_{n,k} ] 
			Y_n^2   \biggr\} \\
			&=  E \biggl\{ \sum_{n=0}^{N-1} 
			\sum_{m=1}^{N-1} \sum_{k=2}^{N-1} 
			\sum_{p=3}^{N-1}
			E \biggl[ [ \overline{  
				f''( W_{n, mk (p-1)} ) } 
			- \overline{f''( W_{n, mkp}} )  ]
			Y_{n, \langle m(k-1), mk \rangle}  
			\biggr]  
			Y_{n,m} Y_n  \biggr\}   \\ 
			&+ 
			E \biggl\{  \sum_{n=0}^{N-1} \sum_{k=1}^{N-1} \sum_{p=3}^{N-1} 
			E \biggl[  
			[ \overline{f''(W_{n, k(p-1)})}  
			- 
			\overline{f''(W_{n, kp})} ]   Y_{n,k}  \biggr]
			Y_n^2 \biggr\} = \cE_5.
		\end{align*}
		Gathering,
		\begin{align*}
			&E \biggl\{  \sum_{n=0}^{N-1} \sum_{m=0}^{N-1} R_1(n,m) \biggr\} 
			= 
			\cE_3 + R - \bar{R} = \cE_3 + \cE_4 - \cE_5  + R'.
		\end{align*}
		We have established the decomposition 
		\begin{align*}
			S &=  
			E( f'(W) )  + R'
			+ \cE_2 +  \cE_3 + \cE_4 - \cE_5.
		\end{align*}
		
		\noindent\textbf{Decomposition of 
			$S'$}: We decompose $S'$
		using a procedure similar to that for $S$:
		\begin{align*}
			2 S' &=  E \biggl\{ \sum_{n=0}^{N-1} 
			\sum_{m=0}^{N-1} Y_n (Y_{n,m})^2  
			f''( W_{n,m} ) 
			\biggr\} \notag \\ 
			&= 
			E \biggr\{  \sum_{n=0}^{N-1} 
			\sum_{m=1}^{N-1} Y_n (Y_{n,m})^2  
			\overline{f''( W_{n,m  } )}
			\biggr\} + 
			E \biggl\{  \sum_{n=0}^{N-1} 
			Y_n^3  
			\overline{f''( W_{n,0} )}
			\biggr\}  \notag \\ 
			&+ E \biggl\{  \sum_{n=0}^{N-1} 
			\sum_{m=0}^{N-1}  Y_n (Y_{n,m})^2  
			E [ f''( W_{n,m  } ) - f''(W) ]
			\biggr\}   \\
			&+ E \biggl\{ 
			\sum_{n=0}^{N-1} 
			\sum_{m=0}^{N-1} 
			Y_n (Y_{n,m})^2  
			E [f''(W)]  \biggr\}  \notag \\ 
			&=  E \biggl\{   \sum_{n=0}^{N-1} 
			\sum_{m=1}^{N-1} \sum_{k=2}^{N-1}
			Y_n (Y_{n,m})^2  
			\biggl[\overline{f''( W_{n,m (k-1)  } )} 
			- \overline{f''( W_{n,m k  } )} \biggr] 
			\biggr\} \\ 
			&+ E \biggl\{  \sum_{n=0}^{N-1} \sum_{k=1}^{N-1} 
			Y_n^3  
			[ \overline{f''( W_{n,k-1} )} - 
			\overline{f''(W_{n,k})}  ]
			\biggr\}   \\
			&+    E \biggl\{  \sum_{n=0}^{N-1} 
			\sum_{m=0}^{N-1} E[ 
			f''( W_{n,m} ) - f''(W)] 
			Y_n (Y_{n,m})^2  \biggl\}
			+ E \biggl\{ 
			E [f''(W)]  \sum_{n=0}^{N-1} 
			\sum_{m=0}^{N-1} 
			Y_n (Y_{n,m})^2  \biggr\}.  
			\\
			&= 2R'' + 2 \cE_6 ,
		\end{align*}
		where 
		$$
		2R'' = E \biggl\{ 
		E [f''(W)]  \sum_{n=0}^{N-1} 
		\sum_{m=0}^{N-1} 
		Y_n (Y_{n,m})^2  \biggr\}.
		$$
		
		Note that $R' + R'' = E[ f''(W) ]  \zeta$. Combining the foregoing decompositions of 
		$S$ and $S'$ we see that 
		\begin{align*}
			&E[ W f(W) ] 
			- E[f'(W)] =
			E[ f''(W) ]  \zeta
			+ \cE_1 + \cE_2 + \cE_3 + \cE_4
			- \cE_5 + \cE_6.
		\end{align*}
		Finally, recalling that $f = f_h$ solves \eqref{eq:se}, we obtain for any $h \in \Lambda_2$ the decomposition
		\begin{align*}
			&E[ h(W) ] - \Phi(h) = E[ f'(W) - W f(W) ] 
			=
			- E [  f''(W) ] \zeta
			- \cE_1 - \cE_2 - \cE_3 - \cE_4 + \cE_5 - \cE_6.
		\end{align*}
		It follows that 
		\begin{align*}
			|   E [h(W)] - \Phi(h) + \zeta \Phi(f'') | 
			\le  \sum_{i=1}^6 |\cE_i| + | \zeta |
			| E [ f''(W) ] - \Phi (f'') |.
		\end{align*}
		To complete the proof of Lemma \ref{prop:dec}, it suffices to note that
		\begin{align*}
			| \zeta ||  E [ f''(W) ] - \Phi (f'') | \le C
			| \zeta |  \cW_1 ( \cL(W), N(0,1) ) \le C  | \zeta |  \cW_2 ( \cL(W), N(0,1) ).
		\end{align*}
	\end{proof}
	
	\begin{proof}[Proof of Proposition \ref{prop:general_w2}] The upper bound \eqref{eq:bound_prelim} follows from \eqref{eq:expansion_w} and
		Lemmas \ref{lem:rio} and \ref{lem:bobkov} by an argument analogous
		to that in \cite[Sections~3.2.2 and~3.2.3]{F19}. For completeness,
		we provide the details below.

			Let $c_0 = 2^{-1} (9/8)^2 < 1$ and $\beta = 2 \zeta$.
		Note that, if $|\beta| \ge ( c_0 / 2 )^{1/2}$, then \eqref{eq:bound_prelim} holds trivially. So, we may assume that 
		\begin{align}\label{eq:beta_ub}
			|\beta| < (  c_0 / 2 )^{ 1 / 2 }.
		\end{align}
		If $\beta \neq 0$, 
		define $m = \round{ c_0 \beta^{-2} }$. 
		Then $m \ge 1$ by \eqref{eq:beta_ub}.
		Let $\{ Z_i \}_{i=1}^m$ 
		be an i.i.d. sequence of random variables such that:
		\begin{align*}
			P \biggl( Z_1 = - \frac32 \biggr) &= \frac{3}{16} - \frac{ \sqrt{m} \beta }{6}, \quad 
			P \biggl( Z_1 = - \frac12 \biggr) = \frac{5}{16} + \frac{ \sqrt{m} \beta }{2}, \\
			P \biggl( Z_1 = \frac12 \biggr) &= \frac{5}{16} - \frac{ \sqrt{m} \beta }{2}, \quad 
			P \biggl( Z_1 =  \frac32 \biggr) = \frac{3}{16} + \frac{ \sqrt{m} \beta }{6}.
		\end{align*}
		By the definition of $c_0$ and $m$ we have $\sqrt{m} |\beta| / 6 < 3/16$, so the above defines 
		a probability distribution. Moreover, by elementary computations, it can be verified that 
		$$
		E[Z_1] = 0, \quad E[Z_1^2] = 1, \quad E[ Z_1^3 ] = \sqrt{m} \beta, 
		\quad E[Z_1^4] \le C.
		$$
		Let $\Theta_m = m^{-1/2} \sum_{i=1}^m Z_i$ 
		and denote by $\kappa_j ( \cdot )$ the $j$th cumulant of a random variable.
		Then,
		$$
		\kappa_3(\Theta_m) = \beta \quad \text{and} \quad 
		\sum_{i=1}^m \frac{E( Z_i^4 )}{m^2} \le C \frac{1}{m} 
		\le C_1  \frac{1}{ c_0 \beta^{-2} - 1 } \le C_2  \beta^2,
		$$
		where $\beta^2 \le c_0/2$ was used. It follows from \cite[Section 2]{B86} that
		\begin{align}\label{eq:fang_approx_iid}
			\sup_{h \in \Lambda_2} \biggl| E[h(\Theta_m)] - \Phi(h) + 
			\frac{\beta}{2} \Phi ( f''_h ) \biggr| 
			\le C \beta^2,
		\end{align}
		and from Lemma \ref{lem:bobkov} that 
		\begin{align}\label{eq:iid_error}
			\cW_2( \cL(\Theta_m), N(0,1) ) 
			\le C_1  m^{-1/2} \le C_2 |\beta|.
		\end{align}
		If  
		$\beta = 0$ one may take $\Theta_m \sim N(0,1)$ in 
		which case \eqref{eq:fang_approx_iid} 
		and \eqref{eq:iid_error} hold trivially.
		
		Now, Lemma \ref{lem:rio} combined with \eqref{eq:fang_approx_iid} implies
		\begin{align*}
			\cW_2(\cL(W), \cL(\Theta_m))
			&\le C \biggl\{
			\sup_{h \in \Lambda_2}  
			[E(h(W)) - E(h(\Theta_m)) ]  
			\biggr\}^{\frac12} \\ 
			&\le C \biggl\{
			\sup_{ h \in \Lambda_2 }  
			[ E[ h(W) ]  - \Phi(h) +  \zeta \Phi(f_h'') ] + \zeta^2
			\biggr\}^{\frac12}.
		\end{align*}
		Using 
		\eqref{eq:expansion_w}, we obtain
		\begin{align*}
			\cW_2(\cL(W), \cL(\Theta_m))
			\le C   \sup_{f \in \mathfrak{F}_2 } \biggl\{ 
			|\zeta| + 
			\biggl[ \sum_{i=1}^{6} 
			|\cE_i| \biggr]^{\frac12} 
			+  [ |\zeta| \cW_2( \cL(W), N(0,1) ) 
			]^{\frac12}
			\biggr\},
		\end{align*}
		so that, by \eqref{eq:iid_error}, 
		\begin{align*}
			&\cW_2(\cL(W), N(0,1))
			\le \cW_2(\cL(W),  \cL(\Theta_m) ) 
			+ \cW_2( \cL(\Theta_m), N(0,1) ) \\ 
			&\le C  \sup_{f \in \fF_2 } \biggl\{ 
			|\zeta| + 
			\sum_{i=1}^{6} 
			\sqrt{|\cE_i|} 
			+  [ |\zeta| \cW_2( \cL(W), N(0,1) ) 
			]^{\frac12} \biggr\}.
		\end{align*}
		
		The upper bound \eqref{eq:bound_prelim}
		now follows by applying the inequality
		$\sqrt{ab} \le \tfrac{1}{2\ve}a + 
		\tfrac{\ve}{2}b$ with 
		$a = |\zeta|$, $b = \cW_2( \cL(W), N(0,1) )$, 
		and $\ve > 0$ sufficiently small.
		
	\end{proof}

\section{Wasserstein-2 rates in the CLT for (nonstationary) dynamical systems}\label{Sec3}

\subsection{Wasserstein-2 bounds under functional correlation decay}\label{Sec3.1}

For every $N\ge1$, let $X_{0,N},\ldots,X_{N-1,N}$ be centered random variables defined on 
a probability space
$(S_N,\cA_N,P_N)$. We assume that
\begin{align}\label{eq:xn_bound}
| X_{n,N} | \le L_N
\end{align}
for every $0\le n<N$, where $L_N\ge1$.

Let $L \ge 1$. For a function $F : [-L, L]^k \to \bR$, where $k \geq 1$, we define 
\[
\text{Lip}(F) = \max_{1 \leq i \leq k} \sup_{x \in  [-L, L]^k } 
\sup_{  \substack{ t \neq t' \\  t,t' \in [-L,L]  }   } 
\frac{|F(x(t/i)) - F(x(t'/i))|}{|t - t'|}.
\]
Here, $x(t/i) \in [-L,L]^k$ denotes the vector obtained from $x$ by replacing its $i$ component with $t$. 
We say that $F$ is separately Lipschitz continuous if $\text{Lip}(F) < \infty$, and we set 
$$
\|F\|_{\Lip } = \|F\|_\infty + \Lip (F), \quad \Vert F \Vert_{\infty} = \sup_{ x \in [-L,L]^k } | F(x) |.
$$
\noindent\textbf{Assumption 1.} We assume that there exists a constant $\theta>0$
independent of $N$, and a sequence of positive constants
$(C_*(N))_{N\ge1}$ such that the following holds for every $N\ge1$.
Whenever $k \ge 2$, $1 \le l < k$, and 
$0 \le n_1 < \ldots < n_k < N$ are integers, and 
 $F : [-L_N, L_N]^k \to \bR$ is a separately Lipschitz continuous function, 
\begin{align}\label{eq:fcb}
	\begin{split}
	&\biggl|  
	\int_{S_N} H(x,x) \, d P_N(x) - \iint_{S_N \times S_N} H(x,y) \, d P_N(x) \, d P_N(y)  
	\biggr| \\
	&\le C_*(N)  \Vert F \Vert_{ \Lip }  ( n_{l + 1} - n_l )^{ - \theta },
	\end{split}
\end{align}
where $H$ is defined by
\begin{align}\label{eq:H}
	H(x,y) = F( X_{ n_1,N }(x), \ldots, X_{n_l,N}(x), X_{ n_{l + 1},N }(y), \ldots , X_{ n_{k},N }(y) ).
\end{align}

\begin{remark}\label{rmk:fcb_many} By induction, \eqref{eq:fcb} readily extends to the case of
"multiple gaps" as follows. For integers
$1 \le l_1 < \ldots < l_p < k$, set $l_0=0$ and $l_{p+1}=k$,
and define the blocks
$$
B_j=\{n_{l_{j-1}+1},\ldots,n_{l_j}\},
\quad j=1,\ldots,p+1.
$$
Writing
$$
X_{B_j,N}(x)
=(X_{n_{l_{j-1}+1},N}(x),\ldots,X_{n_{l_j},N}(x)),
$$
define
$$
G(x_1,\ldots,x_{p+1})
=F(X_{B_1,N}(x_1),\ldots,X_{B_{p+1},N}(x_{p+1})).
$$
	Then, 
	\begin{align}\label{eq:fcb_many_gaps}
		\begin{split}
			&\biggl| \int_{S_N} G(x,\ldots, x) \, d P_N(x) - \int_{S_N}\cdots\int_{S_N} G(x_1, \ldots, x_{p+1}) \, d P_N(x_1) \ldots   \, d P_N(x_{p+1}) \biggr| \\
			&\le C_*(N)  \Vert F \Vert_{ \Lip }  \sum_{j=1}^p  
			( n_{ l_{ j } + 1 } - n_{ l_j } )^{-\theta}.
		\end{split}
	\end{align}	
    In fact, it follows by induction that the same estimate holds if we group some of the blocks together.  In this case the product measure  is replaced by the product measure corresponding to the new block partition. See the proof of \cite[Corollary 3.3]{YeorStien} or \cite[Corollary 1.3.11]{HK}.
\end{remark}

\subsubsection{Optimal rates for classical sums}\label{Sec3.1.1}

Let 
$$
\tilde{S}_N = \sum_{n=0}^{N-1} X_{n,N}, \quad 
W_N =  V_N^{-1/2} \tilde{S}_N,   \quad
V_N = E_N( \tilde{S}_N^2 ),
$$
provided that $V_N > 0$. Here $E_N$ denotes expectation with respect to $P_N$.

\begin{theorem}\label{thm:w2_sums}
Suppose that Assumption 1 is in force with $\theta > 3$. Then there exists a constant $C_\theta > 0$ 
depending only on $\theta$
such that 
for all $N \ge 1$ with $V_N > 0$, 
\begin{align*}
	\cW_2(  \cL( W_N),  N(0,1)  ) \le  C_\theta  L_N^3 ( C_*(N) + 1  )   N V_N^{-3/2}
	+ C_\theta  L^2_N  \sqrt{  ( C_*(N) + 1 ) N } V_N^{-1}.
\end{align*}
In particular, if $V_N^{-1} = O(N^{-1})$ and $\sup_{N \ge 1} \max\{ L_N, C_*(N) \} < \infty$, then 
$$
\cW_2(  \cL( W_N),  N(0,1)  ) = O(N^{-1/2}).
$$
\end{theorem}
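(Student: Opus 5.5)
We apply Proposition \ref{prop:general_w2} with $Y_n = V_N^{-1/2} X_{n,N}$, so that $E(W^2)=1$, and with the natural metric $d(i,n)=|i-n|$, which satisfies \eqref{d cond}. Then $|Y_n|\le L_N V_N^{-1/2}$. For each $f\in\fF_2(C_0)$ the task is to bound $|\zeta|$ and each $|\cE_i|$. The target is $|\zeta| + \sum_i |\cE_i|^{1/2} \lesssim L_N^3(C_*+1)NV_N^{-3/2} + L_N^2\sqrt{(C_*+1)N}\,V_N^{-1}$.

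Each term is a sum of products of the form $Y_{i_1}Y_{i_2}Y_{i_3}$, possibly multiplied by a $\delta_f$-factor. We handle them with Assumption 1 and Remark \ref{rmk:fcb_many}, as follows.

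\emph{Bounding $\zeta$.} Expand $Y_{n,m}$ and $Y_{n,\langle m(k-1),mk\rangle}$ into single summands at distances $m$ and $\approx mk$ from $n$. The expectation of a triple product $E[Y_nY_iY_j]$ is controlled by the largest gap $g$ between consecutive indices, via \eqref{eq:fcb}. Split at that gap; one side is a single centered variable, so the product term vanishes. This gives $|E[Y_nY_iY_j]|\le C_* L_N^3V_N^{-3/2}g^{-\theta}$. For fixed $n$, the number of configurations with largest gap $g$ is $O(g)$. Hence the triple sum is $\lesssim N\sum_g g\cdot g^{-\theta}$, which is finite since $\theta>2$. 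The $(m,k)$ block structure is only a bookkeeping regrouping of indices $j$ with $d(j,n)\in(m(k-1),mk]$, $d(i,n)=m$. Summing over $i,j$ directly, the weight is $\max(m,|j-n|-m)^{-\theta}$, and summing over $m$ and $j$ gives $\sum_m\sum_{r} \max(m,r)^{-\theta}\lesssim \sum_m m^{1-\theta}<\infty$. So $|\zeta|\lesssim C_*L_N^3NV_N^{-3/2}$.

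\emph{Bounding $\cE_1$ and $\cE_2$.} Since $f''$ is Lipschitz, $|\cE_1|\le C\sum_{n,m}E|Y_n||Y_{n,m}|^3$. Here $|Y_{n,m}|\le 2L_NV_N^{-1/2}$, so $|\cE_1|\lesssim L_N^4N^2V_N^{-2}$, whose square root is $L_N^2NV_N^{-1}$. This is too crude. Instead we write $\delta_f(W_{n,m}+uY_{n,m},W_{n,m})$ and use only $|\delta_f|\le C|Y_{n,m}|$ for $m$ small. For large $m$ we exploit that $Y_nY_{n,m}^2$ decorrelates. More precisely, $|\cE_1|$ must be handled together with the decay in $m$: write $Y_nY_{n,m}^2$ and use \eqref{eq:fcb} with $F$ built from $Y_n$ and the function $(y,w)\mapsto y^2\delta_f(\cdot)$ on the far block. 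That function's Lipschitz norm is $O(1)$ in the $w$-variables only through an $L_N$-dependent scale. The centered piece is then $\lesssim C_*m^{-\theta}$, and the remainder factorizes as $E[Y_n]\cdot(\ldots)=0$. Summing gives $|\cE_1|\lesssim (C_*+1)L_N^4NV_N^{-2}$, with square root $L_N^2\sqrt{(C_*+1)N}V_N^{-1}$, the second term of the statement. For $\cE_2$ we use $|\theta_f(W_{n,m},W)|\le C(W-W_{n,m})^2$ and $E(W-W_{n,m})^2\lesssim mL_N^2V_N^{-1}(1+C_*)$, which follows from summable correlations. Together with $|E Y_nY_{n,m}|\lesssim C_*L_N^2V_N^{-1}m^{-\theta}$ and $\theta>2$, this gives $N(1+C_*)^2L_N^4V_N^{-2}$. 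The $\delta_f$ terms in $\cE_2$ are treated by the same decorrelation of $Y_j$ against $\delta_f$, a function of the shell $W_{n,k-1}-W_{n,k}$.

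\emph{Bounding $\cE_3$ through $\cE_6$.} These are centered quantities $E[\overline{A}\,B]$, where $A$ depends on indices at distance $\gtrsim mk$ (or $mk(p-1)$) from $n$ and $B$ on indices near $n$. Applying \eqref{eq:fcb} across the gap (distance $\ge m(k-1)$, respectively $mk(p-2)$) gives a factor $C_*\,(\text{gap})^{-\theta}$. Alternatively, the Lipschitz bound on $\delta_f$ gives a factor equal to the size of the shell, $O(mkL_NV_N^{-1/2})$. We take the minimum of the two. This multi-scale telescoping in $(m,k,p)$ is exactly what the nested indices are designed for: the sum $\sum_{m,k,p}(mkp)^{-\theta}\cdot\mathrm{poly}(m,k)$ converges precisely when $\theta>3$, which is where that hypothesis enters. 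One point needs care: $\delta_f$ is not Lipschitz-bounded as a function of all coordinates simultaneously on $[-L_N,L_N]^k$, only separately Lipschitz with constant $O(V_N^{-1/2})$, and $\|F\|_\Lip$ must absorb the factors $L_N$. We expect this step to be the main obstacle. Verifying that each $F$ has $\|F\|_\Lip\lesssim L_N^{3}V_N^{-3/2}$ (or $L_N^4V_N^{-2}$), uniformly in the long sums entering $W_{n,\cdot}$, requires the separate-Lipschitz structure, since each coordinate enters $W$ linearly with weight $V_N^{-1/2}$. Once these norms are in place, collecting the terms yields $\sum_i\sqrt{|\cE_i|}\lesssim L_N^3(C_*+1)NV_N^{-3/2}+L_N^2\sqrt{(C_*+1)N}V_N^{-1}$. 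The "in particular" statement follows immediately.
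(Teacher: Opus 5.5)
Your route is the paper's: Proposition~\ref{prop:general_w2} with $d(i,n)=|i-n|$, the triple-correlation bound for $\zeta$, decorrelating $Y_n$ for $\cE_1$, and \eqref{eq:fcb_many_gaps} applied to explicit $F$ for the rest. As written, however, it does not prove the displayed inequality, for two concrete reasons.

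\textbf{First, the $\theta_f$-term of $\cE_2$ gives the wrong $C_*$-dependence.} You bound $E(W-W_{n,m})^2\lesssim m(1+C_*)L_N^2V_N^{-1}$ and $|EY_nY_{n,m}|\lesssim (C_*+1)L_N^2V_N^{-1}m^{-\theta}$. As you write yourself, this gives $(1+C_*)^2L_N^4NV_N^{-2}$, hence $(C_*+1)L_N^2\sqrt N\,V_N^{-1}$ after the square root. That is larger than the second term $L_N^2\sqrt{(C_*+1)N}\,V_N^{-1}$ of the statement by a factor $\sqrt{C_*+1}$, so your closing claim contradicts your own estimate. This does not affect the ``in particular'' clause, but it matters when $C_*(N)\to\infty$, as in Theorem~\ref{thm:lsv}. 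The paper instead uses the deterministic bound $(W-W_{n,m})^2\le(2m+1)^2L_N^2V_N^{-1}$, which gives $(C_*+1)L_N^4NV_N^{-2}\sum_m m^{2-\theta}$; this is one place where $\theta>3$ is genuinely needed. The same applies to every product of two expectations (the third sum of $\cE_2$, $\cE_5$, and the last lines of $\cE_4,\cE_6$): decorrelate only one factor and bound the other by \eqref{eq:xn_bound}, or you pick up $C_*(C_*+1)$.

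\textbf{Second, the gap bookkeeping fails at $k=2$.} In $\cE_3$, the first part of $\cE_6$, and the third sum of $\cE_2$, the near factor contains $Y_{n,m}$ (resp.\ $Y_j$ with $j\in[n]_m$). Its distance to the outer variables ($|i-n|>m(k-1)$) is therefore only $m(k-2)+1$, not $m(k-1)$. At $k=2$ this gap is $1$, while $\|F\|_{\Lip}$ carries a shell factor of order $m^2$ (resp.\ $m$). Decorrelating across the outer block then produces divergent sums such as $\sum_m m^2$. For $k=2$ you must instead split off the single centred variable $Y_n$: the gap is $m$, and the product term vanishes since $EY_n=0$. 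In $\cE_2$ you can alternatively take the $m^{-\theta}$ decay from $E[Y_nY_{n,m}]$. For $k\ge3$ you split at the outer gap, with the near indices on both sides of $n$ grouped into one block, so that only the gaps $m(k-2)$ (resp.\ $mk(p-2)$) enter the error. Note also that the resulting error is $C_*\|F\|_{\Lip}(\text{gap})^{-\theta}$ with the shell size already inside $\|F\|_{\Lip}$. It is a product of the two bounds, not their minimum. With these corrections your sketch coincides with the paper's proof.
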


\begin{proof} Let $N \ge 1$ with $V_N > 0$. To simplify the notation, throughout
the proof we write
	$$
	X_j=X_{j,N},\qquad L=L_N, \quad C_* = C_*(N), \quad 
	\quad
	(S,\cA,P)=(S_N,\cA_N,P_N),
	$$
and denote expectation with respect to $P$ by $E$. Take $d(n,m)=|n-m|$.

We introduce the following conventions for vectors with entries in $[-L,L]$.
Write $I_N=\{0,\ldots,N-1\}$. Given
$x=(x_0,\ldots,x_{N-1})\in[-L,L]^N$ and $J\subset I_N$,
let $x_J=(x_j)_{j\in J}$, with coordinates listed in increasing
order of their indices. For integers $a,b$, we write
$$
x_{\le a}=(x_j)_{\substack{j\in I_N\\j\le a}},\quad
x_{[a,b]}=(x_j)_{\substack{j\in I_N\\a\le j\le b}},\quad
x_{\ge b}=(x_j)_{\substack{j\in I_N\\j\ge b}}.
$$
We use analogous notation for strict inequalities and half-open intervals, such as
$x_{<a}$ and $x_{(a,b]}$, and for the random vector
$X=(X_0,\ldots,X_{N-1})$, such as $X_{\le a}$ and $X_{[a,b]}$.
For subsets $J_1,J_2\subset I_N$ such that every element of $J_1$
is smaller than every element of $J_2$, we write
$(x_{J_1},x_{J_2})$ for the concatenation of the corresponding
vectors. The same convention applies to any finite collection
of ordered index sets. Empty vectors are omitted from
concatenations.
Individual coordinates whose indices lie outside $I_N$
are omitted from argument lists.

Throughout the proof, $C_\theta$ denotes a generic constant depending
only on $\theta$, and $C$ denotes a generic absolute constant.
These constants may vary from one occurrence to another.

The upper bound \eqref{eq:bound_prelim} 
with $d(n,m) = |n - m|$
implies
\begin{align}\label{eq:w2_prelim}
	&\cW_2(\cL(W_N), N(0,1))
	\le C \sup_{f \in \mathfrak{F}_2(C_0)} \biggl\{ 
	|\zeta| +  \sum_{i=1}^{6}  \sqrt{|\cE_i|} \biggr\},
\end{align}
where $Y_n$ in the definitions of $\zeta$ and $\cE_i$ are given by 
$$
Y_{n} = V_N^{-1/2} X_{n}.
$$ 
Note that we then have 
$$
W_N = \sum_{n=0}^{N-1} Y_{n} \quad \text{and} \quad E( W_N^2 ) = 1.
$$
It remains to control $|\zeta|, |\cE_1|, \ldots, |\cE_6|$ for
$f \in \fF(C_0)$. This will be done by applying \eqref{eq:fcb_many_gaps} for various choices of $F$.
For integers $n,m,\ell,N$ and 
$x = (x_0,\ldots, x_{N-1}) \in [-L, L]^N$ we write
\begin{align*}
	&s_{n,m} = \sum_{ \substack{ |i - n| > m \\ 
			0 \le i < N } }  x_i, \quad 
	x_{n,m} = \sum_{ \substack{ |i - n| = m \\ 
			0 \le i < N} } x_{i}, \quad 
	x_{n, \langle m, \ell \rangle}  =  \sum_{ 
		\substack{ m < |i - n| \le \ell \\ 
			0 \le i < N } 
	} x_i.
\end{align*}

\medskip 

\noindent\textbf{Bound on $|\zeta|$.} Whenever $0 \le n \le n + m \le n + m + \ell < N$, we have
\begin{align}\label{eq:multiple_corr}
|	E(  X_{n} X_{n+m} X_{n+m+\ell} ) | \le ( C_* + 1  ) L^3 \min \{  \max\{  m, \ell \}^{-\theta}, 1 \}.
\end{align}
Indeed, if $ \max\{  m, \ell \} > 0$ this follows by an application of \eqref{eq:fcb} with 
$F : [ -L, L ]^3 \to \bR$ defined by
$$
F( x_n, x_{n+m}, x_{n+m+\ell} ) =  x_n x_{n+m} x_{n+m+\ell},
$$
and otherwise the bound is a direct consequence of \eqref{eq:xn_bound}.
Consequently, as $\theta > 2$, 
\begin{align*}
|\zeta| &\le 
\sum_{n=0}^{N-1} 
\sum_{m=1}^{N-1} \sum_{k=2}^{N-1} 
| E(Y_{n, \langle m(k-1), mk \rangle}  Y_{n,m} Y_n ) |
+ 
\sum_{n=0}^{N-1} \sum_{k=1}^{N-1} 
| E( Y_{n,k}  
Y_n^2 ) |  
+ \frac12 \sum_{n=0}^{N-1} 
\sum_{m=0}^{N-1} 
|E( Y_n Y_{n,m}^2 ) |  \\
&\le C_\theta (C_*  + 1) L^3  V_N^{-3/2} \biggl\{   \sum_{n=0}^{N-1} 
\sum_{m=1}^{N-1} \sum_{k=3}^{N-1} m ( m(k - 2) + 1 )^{ - \theta  } 
+ \sum_{n=0}^{N-1} 
\sum_{m=1}^{N-1}  m^{ 1 - \theta  } 
+ \sum_{n=0}^{N-1} \sum_{k=1}^{N-1} k^{-\theta } \\
&\qquad + \frac12 \sum_{n=0}^{N-1} 
\sum_{m=1}^{N-1}  m^{ - \theta  } + N
\biggr\} \le  C_\theta (C_*  + 1)  L^3 N V_N^{-3/2}.
\end{align*}

\noindent\textbf{Bound on $\cE_1$.} Given $u \in [0,1]$ and integers $0 \le n,m < N$, we define
\begin{align*}
&F_1(  x_{  \le n - m },  x_n, x_{ \ge n  + m}   ; n,m,u ) 
= x_n x_{n,m}^2 
\delta_f \biggl(  V_N^{-1/2} s_{n,m} +   u V_N^{-1/2} x_{n,m}  , V_N^{-1/2} s_{n,m}   \biggr).
\end{align*}
Using the property that $f''$ is $C_0$-Lipschitz continuous, it is straightforward to verify that
$$
\Vert F_1(  \cdot ; n,m,u ) \Vert_{\Lip} \le C V_N^{-1/2}  L^4,
$$
where $C > 0$ is an absolute constant. Since $E(X_n)=0$, applying
\eqref{eq:fcb_many_gaps} for $m\ge1$
and estimating the terms with $m=0$ directly using the Lipschitz
continuity of $f''$, we obtain
\begin{align*}
	| \cE_1|  &=  \biggl| V_N^{ - 3/2 }  \int_0^1
	 \sum_{n=0}^{N-1} 
	\sum_{m=0}^{N-1}   
	E[ F_1(    X_{  \le n - m },  X_n, X_{ \ge n  + m}  ; n,m,u ) ]
 (1-u) \, du \biggr| \\
 &\le C_\theta  ( C_* + 1) L^4 V_N^{-2}  N  \biggl\{  \sum_{m=1}^{N-1}  m^{ - \theta } + 1  \biggr\}
 \le C_\theta ( C_* + 1)  L^4 V_N^{-2}  N .
\end{align*}

\noindent\textbf{Bound on $\cE_2$.} Similar to \eqref{eq:multiple_corr}, we have 
\begin{align}\label{eq:pair_corr}
	|	E(  X_{n} X_{n+m}  ) | \le ( C_* + 1  ) L^2 \min \{  m^{-\theta}, 1 \}.
\end{align}
Next, for integers $0 \le n < N$, $1 \le m < N$, and $1 \le k < N$, we define
\begin{align*}
	F_{2,1}(  x_{ \le n - k  }, x_n, x_{ \ge n + k}  ; n,k ) = \delta_f(  V_N^{-1/2} s_{n,k-1},  V_N^{-1/2} s_{n,k}  )  x_n,
\end{align*}
and, for $2 \le k < N$ we define
\begin{align*}
F_{2,2}(  
x_{ < n - m(k-1) } , x_{[n-m,n+m]}, x_{ > n + m(k-1) } 
; n,m,k )  =	\sum_{j \in [n]_m}  \delta_{f}(  V_N^{-1/2}  s_{n,m(k-1)},  V_N^{-1/2} s_{n, mk}  )  x_j.
\end{align*}
Then, using $f \in \fF(C_0)$ we obtain
\begin{align*}
	\Vert F_{2,1}(  \cdot  ; n,k )   \Vert_{\Lip} \le  C  L^2  V_N^{-1/2} 
	\quad \text{and} \quad 
		\Vert F_{2,2}(  \cdot  ; n,m,k )   \Vert_{\Lip} \le  C  L^2  V_N^{-1/2} m^2.
\end{align*}
Set 
$$
\cD_N = L^4 (C_* + 1) N V_N^{-2}.
$$
Recall that $\theta_f(x,y) \le C|x-y|^2$.
Applying \eqref{eq:fcb_many_gaps} together with \eqref{eq:pair_corr} and \eqref{eq:xn_bound} gives
\begin{align*}
	|\cE_2| &\le  \sum_{n=0}^{N-1} \sum_{m=0}^{N-1}  
	| E[ \theta_f( W_{n,m} , W  )] 
	E[ Y_{n,m} Y_n ]  | \\
	&\qquad+  V_N^{-1/2} \sum_{n=0}^{N-1} 
	\sum_{k=1}^{ N -1} |E[  F_{2,1}(  X_{ \le n - k  }, X_n, X_{ \ge n + k}  ; n,k )  ]
	E[ Y_n^2 ] |
	\\ 
	&\qquad +
	V_N^{-1/2} \sum_{n=0}^{N-1} \sum_{m=1}^{N-1} 
	|E[   F_{2,2}(  
	X_{ < n - m } , X_{[n-m,n+m]}, X_{ > x + m } 
	; n,m,2 )  ]
	E[ Y_n Y_{n,m} ]| \\
	&\qquad +
	V_N^{-1/2}
	\sum_{n=0}^{N-1} \sum_{m=1}^{N-1}  \sum_{k=3}^{ N -1}   
	|E[  
	F_{2,2}(  
	X_{ < n - m(k-1) } , X_{[n-m,n+m]}, X_{ > x + m(k-1) } 
	; n,m,k ) 
	  ]
	E[ Y_n Y_{n,m} ]|  \\
	&\le   C_\theta  \cD_N \biggl( \sum_{m=1}^{N-1}  m^{2-\theta} + 1 \biggr)
	+ C_\theta \cD_N  \sum_{k=1}^{N-1}  k^{ - \theta  }   
	+ C_\theta \cD_N  \sum_{m=1}^{N-1}  m^{ 2 - \theta } \\
	&\qquad + C_\theta \cD_N
	\sum_{m=1}^{N-1} 
	  \sum_{k=3}^{ N -1}   
	m^2  ( m(k-2) )^{-\theta}  \\
	&\le  C_\theta \cD_N,
\end{align*}
where $\theta > 3$ was used in the last inequality.

\medskip

\noindent\textbf{Bounds on $\cE_3,\ldots,\cE_6$.}
The remaining four terms are estimated in the
same manner as $\cE_1$ and $\cE_2$, by applying
\eqref{eq:fcb_many_gaps} with suitable choices of $F$. To avoid
repeating essentially identical computations, for each term we only
specify the functions used, bounds on their Lipschitz norms, and the
resulting estimates obtained from \eqref{eq:fcb_many_gaps}.

For $\cE_3$, we define
\begin{align*}
	&F_{3,1}(   x_{  < n - m(k-1)  ) }, x_{n-m}, x_n, x_{n+m}, x_{ > n + m(k-1) }   ; n,m,k,u  ) \\
	&=\delta_f (   V_N^{-1/2} s_{n,mk}  + u  V_N^{-1/2} x_{n, \langle m(k-1), mk \rangle } ,  V_N^{-1/2} s_{n, 2mk} ) 
		V_N^{-1/2} x_{n, \langle m(k-1), mk \rangle } 
	x_{n,m} x_n \\
	&\qquad - E[    \delta_f(  W_{n,mk}  + u Y_{n, \langle m(k-1), mk \rangle } ,  W_{n, 2mk} ) 
	Y_{n, \langle m(k-1), mk \rangle }  ] x_{n,m} x_n,
\end{align*}
and 
\begin{align*}
	F_{3,2}(   x_{ \le n - k }, x_n, x_{ \ge n + k}  ; n,k,u ) &= 
		\delta_f(  V_N^{-1/2} s_{n,k} + u V_N^{-1/2}  x_{n,k},  V_N^{-1/2}  s_{n, 2k} )   V_N^{-1/2}  x_{n,k} 
	x_n^2 \\
	&\qquad -  	E[
		\delta_f( W_{n,k} + u Y_{n,k},  W_{n, 2k} )    Y_{n,k} ]
	x_n^2.
\end{align*}
These functions satisfy
\begin{align*}
	\Vert F_{3,1}(  \cdot ; n,m,k,u  ) \Vert_{ \Lip } \le C L^4  V_{N}^{-1} m^2 k 
	\quad \text{and} \quad 
	\Vert F_{3,2}( \cdot ; n,k, u ) \Vert_{\Lip} \le  C L^4 V_N^{-1}k.
\end{align*}
Applying \eqref{eq:fcb_many_gaps} gives
\begin{align*}
	|\cE_3| &= \biggl| \int_0^1   V_N^{-1}  \sum_{n=0}^{N-1} 
	\sum_{m=1}^{N-1} \sum_{k=2}^{N-1}
	 E [ F_{3,1}(   X_{  < n - m(k-1)  ) }, X_{n-m}, X_n, X_{n+m}, X_{ > n + m(k-1) }   ; n,m,k,u  )  ] 
	\, du \\ 
	&\qquad+ \int_0^1  V_N^{-1}  \sum_{n=0}^{N-1} \sum_{k=1}^{N-1} 
	E[  F_{3,2}(   X_{ \le n - k }, X_n, X_{ \ge n + k}  ; n,k,u )   ]
	\, du \biggr|
	\le  C_\theta \fD_N.
\end{align*}

For $\cE_4$, define
\begin{align*}
	&F_{4,1}( x_{ < n - mk(p-1) },  x_{ [ n - mk, n - m(k-1)  )  }, x_{n-m}, x_n, x_{n+m} ,  x_{ ( n + m(k-1), n + mk ]  }, x_{ > n + mk(p-1) }; n,m,k,p )
	\\
	&=   
		\delta_f(   V_N^{-1/2} s_{n, mk (p-1)},   V_N^{-1/2} s_{n, mkp}  )	
	x_{n, \langle m(k-1), mk \rangle}  x_{n,m} x_n 
	\\
	&\quad-
	E[
		\delta_f( W_{n, mk (p-1)}, W_{n, mkp}  )	
	]
	x_{n, \langle m(k-1), mk \rangle}  x_{n,m} x_n
\end{align*}
and
\begin{align*}
	&F_{4,2}( x_{ < n - k(p-1) }, x_{n-k}, x_n, x_{n+k}, x_{  > n + k(p-1) }; n,k,p ) \\
	 &=
		\delta_f(  V_N^{-1/2} s_{n, k(p-1)},   V_N^{-1/2} s_{n, kp}  )
	   x_{n,k}  
	x_n^2 
	- E[
		\delta_f( W_{n, k(p-1)},  W_{n, kp}  ) ]   x_{n,k}  
	x_n^2.
\end{align*}
These functions satisfy
\begin{align*}
	\Vert F_{4,1} (\cdot; n,m,k,p) \Vert_{\Lip} \le  C L^4  V_{N}^{-1/2} m^2 k \quad \text{and} \quad 
	\Vert F_{4,2} (\cdot; n,k,p) \Vert_{\Lip} \le  C L^4  V_{N}^{-1/2} k.
\end{align*}
Applying \eqref{eq:fcb} together with \eqref{eq:multiple_corr} gives
		\begin{align*}
	|\cE_4| &= \biggl| V_N^{-3/2}  E \biggl\{ \sum_{n=0}^{N-1} 
	\sum_{m=1}^{N-1} \sum_{k=2}^{N-1} 
	\sum_{p=3}^{N-1}  
	F_{4,1}( X_{ < n - mk(p-1) },  X_{ [ n - mk, n - m(k-1)  )  }, X_{n-m}, X_n, X_{n+m} ,  \\
	&\qquad X_{ ( n + m(k-1), n + mk ]  }, X_{ > n + mk(p-1) }; n,m,k,p )
	\biggl\} \\
	&\qquad +  
	V_N^{-3/2} E \biggl\{ \sum_{n=0}^{N-1} \sum_{k=1}^{N-1} \sum_{p=3}^{N-1} 
	F_{4,2}( X_{ < n - k(p-1) }, X_{n-k}, X_n, X_{n+k}, X_{  > n + k(p-1) }; n,k,p )  \biggl\} \\
	&\qquad + E \biggl\{  \sum_{n=0}^{N-1} 
	\sum_{m=1}^{N-1} \sum_{k=2}^{N-1} 
	E[ \delta_f(  W_{n, 2mk} , W )  ]
	Y_{n, \langle m(k-1), mk \rangle}  Y_{n,m} Y_n \biggr\} \\ 
	&\qquad + E \biggl\{
	\sum_{n=0}^{N-1} \sum_{k=1}^{N-1} 
	E [  \delta_f ( W_{n, 2k}, W  )]
	Y_{n,k}  
	Y_n^2  \biggr\} \biggr| \le  C_\theta \fD_N.
\end{align*}

Next, for $\cE_5$ we define 
\begin{align*}
	&F_{5,1}(
	x_{ < n - mk(p-1) }, x_{  [  n - mk , n - m(k-1)  ) }, x_{  (  n + m(k-1) , n + mk  ] }, x_{ > n + mk(p-1) }; n,m,k,p
	) \\
	&= 
		\delta_f(  V_N^{-1/2} s_{n, mk (p-1)}, V_N^{-1/2} s_{n, mkp}   )
	x_{n, \langle m(k-1), mk \rangle}  - 
	E[
		\delta_f( W_{n, mk (p-1)}, W_{n, mkp}   )
	]
	x_{n, \langle m(k-1), mk \rangle}  
\end{align*}
and
\begin{align*}
	&F_{5,2}(
	x_{ < n - k(p-1) }, x_{n-k}, x_{n+k}, x_{ >  n + k(p-1) }; n,k,p
	) \\
	&=   \delta_f(  V_N^{-1/2} s_{n, k(p-1)}, V_N^{-1/2} s_{n, kp} )    x_{n,k}  - E[\delta_f( W_{n, k(p-1)}, W_{n, kp} ) ]   x_{n,k}.
\end{align*}
These functions satisfy
\begin{align*}
	\Vert F_{5,1}(\cdot; n,m,k,p) \Vert_{\Lip} \le C L^2  V_{N}^{-1/2} m^2 k, \quad 
	\Vert F_{5,2}(\cdot; n,k,p) \Vert_{\Lip} \le C L^2  V_{N}^{-1/2}  k,
\end{align*}
and \eqref{eq:fcb_many_gaps} combined with \eqref{eq:xn_bound}  yields
\begin{align*}
	|\cE_5| &= \biggl|   V_N^{-1/2} E \biggl\{  \sum_{n=0}^{N-1} 
	\sum_{m=1}^{N-1} \sum_{k=2}^{N-1} 
	\sum_{p=3}^{N-1}
	E \biggl[ 
	F_{5,1}(
	X_{ < n - mk(p-1) }, X_{  [  n - mk , n - m(k-1)  ) }, X_{  (  n + m(k-1) , n - mk  ] }, \\
	&\qquad X_{ > n + mk(p-1) }; n,m,k,p
	)
	\biggr]  
	Y_{n,m} Y_n  \biggr\}  \\
	&\qquad + 
	V_N^{-1/2}
	E \biggl\{  \sum_{n=0}^{N-1} \sum_{k=1}^{N-1} \sum_{p=3}^{N-1} 
	E \biggl[  F_{5,2}(
	X_{ < n - k(p-1) }, X_{n-k}, X_{n+k}, X_{ >  n - k(p-1) }; n,k,p
	)  \biggr]
	Y_n^2 \biggr\} \biggr| \\
	&\le C_\theta \fD_N.
\end{align*}

Finally, for $\cE_6$ we define
\begin{align*}
	&F_{6,1}( x_{ < n - m(k-1) }, x_{n-m}, x_n, x_{n+m}, x_{n + m(k-1) }; n,m,k ) \\
	&= 	
		\delta_f( V_{N}^{-1/2} s_{n,m (k-1)  } , V_{N}^{-1/2} s_{n,m k  }  )	 x_n x_{n,m}^2 
		- 	E[
			\delta_f( W_{n,m (k-1)  } , W_{n,m k  }  )	] x_n x_{n,m}^2 
\end{align*}
and
\begin{align*}
	F_{6,2} ( x_{\le n - k}, x_n, x_{ \ge n + k }; n,k  )
	= \delta_f( V_{N}^{-1/2}  s_{n,k-1} , V_{N}^{-1/2} s_{n,k} ) x_n^3 
	- E[ \delta_f( W_{n,k-1} , W_{n,k} )] x_n^3.
\end{align*}
Then,
\begin{align*}
	\Vert F_{6,1}( \cdot ; n,m,k ) \Vert_{\Lip} \le C L^4  V_{N}^{-1/2} m, \quad 
	\Vert F_{6,2}( \cdot ; n,k ) \Vert_{\Lip} \le C L^4  V_{N}^{-1/2},
\end{align*}
and \eqref{eq:fcb_many_gaps} combined with \eqref{eq:multiple_corr} and $f \in \fF(C_0)$ yields
\begin{align*}
	|\cE_6| &= \biggl| \frac12  V_{N}^{-3/2}  E \biggl\{   \sum_{n=0}^{N-1} 
	\sum_{m=1}^{N-1} \sum_{k=2}^{N-1} 
	F_{6,1}( X_{ < n - m(k-1) }, X_{n-m}, X_n, X_{n+m}, X_{n + m(k-1) }; n,m,k )
	\biggr\} \\
	&\qquad+ \frac12 V_{N}^{-3/2} E \biggl\{  \sum_{n=0}^{N-1} \sum_{k=1}^{N-1} 
	F_{6,2} ( X_{\le n - k}, X_n, X_{ \ge n + k }; n,k  )
	\biggr\} \\
	&\qquad + \frac12 E \biggl\{  \sum_{n=0}^{N-1} 
	\sum_{m=0}^{N-1} E[ 
	\delta_f(  W_{n,m } , W  ) ] 
	Y_n (Y_{n,m})^2  \biggl\} \biggr|
	\le C_\theta \fD_N.
\end{align*}

\noindent\textbf{Conclusion.} Combining \eqref{eq:w2_prelim} with the bounds on $|\zeta|, |\cE_1|, \ldots, |\cE_6|$,
we obtain
\begin{align*}
	\cW_2(\cL(W_N), N(0,1)) &\le C_\theta (C_*  + 1)  L^3 N V_N^{-3/2}
	+ \sqrt{
	C_\theta \fD_N } \\
	&\le C_\theta L^3 (C_*  + 1)   N V_N^{-3/2}
	+ 
		C_\theta L^2 \sqrt{ (C_* + 1 ) N }  V_N^{-1},
\end{align*}
as wanted.
\end{proof}

\subsubsection{Close to optimal rates for nonconventional sums}\label{Sec3.1.2}
Let us take $\bar\ell\in\mathbb N$ and increasing polynomials $q_1(n)\leq q_2(n)\leq\ldots  \leq q_{\bar\ell}(n)$ which take positive integer values on the positive integers. Suppose also that $q_1(n)=n$. Let us further take  bounded Lipchitz functions $F_{n,N}$. 
Suppose that $X_{n,N}$ is defined for all $n\leq q_{\bar\ell}(N)$ and satisfies the functional decay of correlations \eqref{eq:H}. 
Set
$$
S_N = \sum_{n=0}^{N-1}\left(F_{n,N}(X_{q_1(n),N},\ldots, X_{q_{\bar\ell}(n),N})- E_N[F_{n,N}(X_{q_1(n),N},\ldots, X_{q_{\bar\ell}(n),N})]\right),
$$
$$
W_N =  V_N^{-1/2} S_N,   \quad
V_N = E_N( S_N^2 ),
$$
provided that $V_N > 0$. Here $E_N$ denotes expectation with respect to $P_N$.
Let us take some $l_N\in\mathbb N, l_N\geq2$ and define a graph on $\{0,1,\ldots ,N-1\}$ by connecting $k$ and $m$ if 
$$
\min\{|q_i(k)-q_j(m)|: 1\leq i,j\leq \bar\ell\}\leq l_N.
$$
Denote by $d_N$ the resulting distance on this graph. Note that as $q_1(n)=n$ we have that $d_N(\cdot,\cdot)<N$ and so \eqref{d cond} holds.
Then the size of a ball of radius $1$ around $n$ is $O(l_N)$, uniformly in $n$ (see \cite[page 57]{HK}). Therefore, for $k\geq2$ the size of ball of radius $k$ is $O(l_N^k)$, uniformly in $n$.  

\begin{theorem}\label{Noncon}
Suppose that Assumption 1 is in force. Assume also that $\|F_{n,N}\|\leq L_N$ where $\|\cdot\|$ is the Lipchitz norm and $L_N\geq1$.
Then there exists a constant $C_0 > 0$ 
such that 
for all $N \ge 1$ with $V_N > 0$, 
\begin{align*}
	\cW_2(  \cL( W_N),  N(0,1)  ) \le L_N^3C_0(NV_N^{-3/2}+\sqrt{N}V_N^{-1})l_N^3+C_0L_N^2\sqrt{C_*(N)}N^{4}l_N^{-\theta/2}.
\end{align*}
Thus, if $V_N\geq sN$ for some $s>0$, $C_*(N)$ and $L_N$ are bounded in $N$,  choosing $l_N = N^{\frac{9}{6+\theta}}$ we get the  rates
$$
	\cW_2(  \cL( W_N),  N(0,1)  )=O(N^{ -1/2 + 27/(6 + \theta) })
$$
and so as $\theta\to\infty$ we get close to the optimal rates $O(N^{-1/2})$.
\end{theorem}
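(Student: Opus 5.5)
We apply Proposition \ref{prop:general_w2} with $d=d_N$, the graph distance defined above, and with
$$
Y_n=V_N^{-1/2}\bigl(F_{n,N}(X_{q_1(n),N},\ldots,X_{q_{\bar\ell}(n),N})-E_N[\,\cdot\,]\bigr).
$$
Condition \eqref{d cond} holds because $q_1(n)=n$. The task is then to bound $|\zeta|$ and $|\cE_1|,\ldots,|\cE_6|$ uniformly over $f\in\fF_2(C_0)$.

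The key structural fact is a decoupling property of the graph. Suppose $A,B\subset\{0,\ldots,N-1\}$ satisfy $d_N(a,b)\ge 2$ for all $a\in A$, $b\in B$. Then the sets of underlying indices $\{q_i(a)\}$ and $\{q_j(b)\}$ are separated by gaps larger than $l_N$. When these indices are listed in increasing order, the blocks coming from $A$ and from $B$ interlace. Even so, each transition between an $A$-index and a $B$-index is a gap exceeding $l_N$, and there are at most $2\bar\ell N$ such gaps. Remark \ref{rmk:fcb_many} with block regrouping therefore lets us replace the joint expectation of any functional $G(\text{$A$-variables},\text{$B$-variables})$ by its expectation under the product of the two marginal laws. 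The error is at most
$$
C_*(N)\,\|G\|_{\Lip}\,N\,l_N^{-\theta}.
$$
This replaces the polynomial decay in $|i-n|$ used in the proof of Theorem \ref{thm:w2_sums}. Here the decay is replaced by a uniform small error $l_N^{-\theta}$ per gap, while separation $d_N\ge2$ plays the role of independence, exactly as in local dependence.

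For each term in $\zeta$ and the $\cE_i$, we then split according to the values of $m,k,p$. In every term the "centered" factor, for example $\overline{\delta_f(W_{n,m(k-1)},W_{n,mk})}$ multiplied by $Y_nY_{n,m}^2$, depends on variables in a $d_N$-ball of radius $\le mk$ around $n$ and on variables outside a ball of radius $m(k-1)\ge mk-mk/2$. Whenever the radii differ by at least $2$, the decoupling above makes the expectation equal to its product-measure value. That product value vanishes because of the centering or because $E(Y_n)=0$. What remains is an error of size
$$
C_*\,L_N^{4}\,(\text{Lipschitz constant})\,N\,l_N^{-\theta}.
$$
The Lipschitz constants are bounded polynomially by ball sizes, which are at most $N$. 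Summing over $n,m,k,p\le N$ gives a crude total of $C L_N^4 C_* N^{8}l_N^{-\theta}$ after normalisation; its square root produces the term $L_N^2\sqrt{C_*}N^4l_N^{-\theta/2}$. I expect the exponent bookkeeping here to be the main obstacle. In particular, one must check that each of the (up to four) summation indices and the Lipschitz norms together cost at most $N^8$ in total, possibly using $V_N\le CL_N^2N^2$ trivially.

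The terms that cannot be decoupled are the "local" ones, in which the relevant radii are $O(1)$; for instance $m\le 2$, small $k,p$, or the $m=0$ terms. For these we use the ball-size bound $|\{i:d_N(i,n)\le r\}|=O(l_N^r)$ together with $|Y_i|\le 2L_NV_N^{-1/2}$ and the Lipschitz continuity of $f''$. This gives $|\zeta|\le CL_N^3l_N^3NV_N^{-3/2}$, since products $Y_nY_{n,m}Y_{n,\langle\cdot\rangle}$ involve at most about $l_N^3$ terms. It also gives $|\cE_i|\le CL_N^4l_N^{6}NV_N^{-2}$, whose square root is $L_N^2l_N^3\sqrt N V_N^{-1}$. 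Combining these with the nonlocal errors in \eqref{eq:bound_prelim} yields the stated bound. Finally, setting $V_N\ge sN$ and balancing $N^{-1/2}l_N^3$ against $N^4l_N^{-\theta/2}$ gives $l_N=N^{9/(6+\theta)}$ and the rate $N^{-1/2+27/(6+\theta)}$.
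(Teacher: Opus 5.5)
Your proposal follows the paper's proof essentially step for step: Proposition \ref{prop:general_w2} with $d=d_N$, decoupling of index sets at $d_N$-distance at least $2$ via \eqref{eq:fcb_many_gaps} with an $l_N^{-\theta}$ error per gap and a crude power of $N$ for the nonlocal terms, ball-size bounds $O(l_N^r)$ for the local terms, and the same choice of $l_N$. One bookkeeping caveat, which the paper's sketch also passes over: at $(m,k)=(1,2)$ the first sum of $\cE_3$ and the third sum of $\cE_4$ involve $W_{n,4}$, so your naive count (ball sizes, $|Y_i|\le 2L_NV_N^{-1/2}$, Lipschitz $f''$) gives $NL_N^4V_N^{-2}l_N^{7}$ rather than $l_N^{6}$, and reaching the stated $l_N^3$ needs one more decoupling, e.g.\ $|E(Y_{n,2}Y_{n,1}Y_n)|=O(L_N^3V_N^{-3/2}l_N^2)$ up to decoupling errors, combined for $\cE_3$ with splitting off $f''(W_{n,3})-f''(W_{n,4})$, which depends only on $\{i:d_N(i,n)\ge4\}$ and so decouples from $\{i:d_N(i,n)\le2\}$.
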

\begin{remark}
 When $F_{n,N}=F$  does not depend on $n$ and $N$ then $V_N$ indeed grows linearly fast and almost optimal CLT rates where obtained in the uniform (Kolmogorov) distance,  see \cite[Chapter 1]{HK} and \cite{YeorStien}. The above theorem complements these results, and obtains similar estimates in $\cW_2$.
\end{remark}

\begin{proof} Let $N \ge 1$ with $V_N > 0$. To simplify the notation, throughout
the proof we write
	$$
	X_j=X_{j,N},\qquad L=L_N, \quad C_* = C_*(N), \quad 
	\quad
	(S,\cA,P)=(S_N,\cA_N,P_N),\quad F_{n,N}=F_n
	$$
and denote expectation with respect to $P$ by $E$.
We also adopt the conventions for vectors with entries in $[-L,L]$
introduced at the beginning of the proof of Theorem \ref{thm:w2_sums}.

Throughout the proof, we use $C$ to denote a generic absolute constant.
The upper bound \eqref{eq:bound_prelim} implies
\begin{align*}
	&\cW_2(\cL(W_N), N(0,1))
	\le C \sup_{f \in \mathfrak{F}_2(C_0)} \biggl\{ 
	|\zeta| +  \sum_{i=1}^{6}  \sqrt{|\cE_i|} \biggr\}
\end{align*}
for some absolute constant $C > 0$,
where $Y_n$ in the definitions of $\zeta$ and $\cE_i$ are given by
$$
Y_{n} = V_N^{-1/2} \overline{F_n(X_{q_1(n)},...,X_{q_{\bar\ell}(n)})}.
$$ 
Note that we then have 
$$
W_N = \sum_{n=0}^{N-1} Y_{n} \quad \text{and} \quad E( W_N^2 ) = 1.
$$
It remains to control $|\zeta|, |\cE_1|, \ldots, |\cE_6|$ for
$f \in \fF_2(C_0)$. This will be done by applying \eqref{eq:fcb_many_gaps} for various choices of $F$.
Define 
$$
\tilde x_n=(x_{q_k(n)})_{k=1}^{\bar\ell}
$$
and 
$$
\check x_n=\overline{F}_{n}(\tilde x_n).
$$
For convenience, we introduce the following notation for integers $n,m,\ell,N$ and 
$x \in [-L, L]^N$:
\begin{align*}
	&s_{n,m} = \sum_{ \substack{ d_N(i,n) > m \\ 
			0 \le i < N } }  \check x_i, \quad 
	x_{n,m} = \sum_{ \substack{ d_N(i,n) = m \\ 
			0 \le i < N} } \check x_{i}, \quad 
	x_{n, \langle m, \ell \rangle}  =  \sum_{ 
		\substack{ m < d_N(i,n) \le \ell \\ 
			0 \le i < N } 
	} \check x_i.
\end{align*}
Denote also 
$$
\check X_n=V_N^{1/2}Y_n=\overline{F_n(X_{q_1(n)},...,X_{q_{\bar\ell}(n)})}.
$$
\medskip 

\noindent\textbf{Bound on $|\zeta|$.} Whenever   
$$
\min \{d_N(n,k),d_N(n,s) \}\geq 2
$$
we have
\begin{align}\label{eq:multiple_corr_nonconv}
|	E(  \check X_{n} \check X_{k} \check X_{s} ) | \le ( C_* + 1  ) L^3 l_N^{-\theta}.
\end{align}
Indeed, this follows by an application of \eqref{eq:fcb} with 
$F : [ -L, L ]^{3\bar\ell} \to \bR$ defined by
$$
F( (x_{q_j(n)})_{j=1}^{\bar\ell}, (x_{q_j(n+m)})_{j=1}^{\bar\ell}, (x_{q_j(n+m+\ell)})_{j=1}^{\bar\ell} )
$$
$$
=  \overline F_n((x_{q_j(n)})_{j=1}^{\bar\ell}) \overline F_{n+m}((x_{q_j(n+m)})_{j=1}^{\bar\ell}) \overline F_{n+m+\ell}((x_{q_j(n+m+\ell)})_{j=1}^{\bar\ell}).
$$
Now, as $|F_n|\leq L$ when
$$
\min \{d_N(n,k),d_N(n,s) \}<2
$$
we will use that for all $n,k,s$,
\begin{align}\label{eq:multiple_corr_nonconv1}
|	E(  \check X_{n} \check X_{k} \check X_{s} ) | \le CL^3.
\end{align}
Consequently,  using the triangle inequality for $d_N$,
\begin{align*}
|\zeta| &\le 
\sum_{n=0}^{N-1} 
\sum_{m=1}^{N-1} \sum_{k=2}^{N-1} 
| E(Y_{n, \langle m(k-1), mk \rangle}  Y_{n,m} Y_n ) |
+ 
\sum_{n=0}^{N-1} \sum_{k=1}^{N-1} 
| E( Y_{n,k}  
Y_n^2 ) |  
+ \frac12 \sum_{n=0}^{N-1} 
\sum_{m=0}^{N-1} 
|E( Y_n Y_{n,m}^2 ) |  \\
&\le C(C_*  + 1) L^3  V_N^{-3/2}(N^5 l_N^{-\theta}+Nl_N^3+N^2l_N^{-\theta}+N l_N^2+N^3l_N^{-\theta}+Nl_N^2)
\end{align*}
where the second term in the brackets accounts to the case $m=1,k=2$, the fourth to the case $k=1$ and the sixth to the case $m=0,1$.

\medskip

\noindent\textbf{Bound on $\cE_1$.} 
First when $m=0,1$ we simply bound the sums  by $CL^3V_N^{-3/2}Nl_N^2$ using that $\delta_f$ is bounded above by some constant. Consider indexes $(n,m)$ with $m\geq2$.
Given $u \in [0,1]$ and integers $0 \le n,m < N$, we define
\begin{align*}
&F_1(  \tilde x_{ J_{n,m} } ; n,m,u ) 
= \check x_n \check x_{n,m}^2 
\delta_f \biggl(  V_N^{-1/2} s_{n,m} +   u V_N^{-1/2} \check x_{n,m}  , V_N^{-1/2}s_{n,m}   \biggr),
\end{align*}
where 
$$
J_{n,m} = \{ n \}\cup\{0\leq i<N-1: d(i,n)\geq m\}
$$
Using the property that $f''$ is $C_0$-Lipschitz continuous, it is straightforward to verify that
$$
\Vert F_1(  \cdot ; n,m,u ) \Vert_{\Lip} \le CN V_N^{-1/2}  L^4.
$$
Since $E(\check X_n) = 0$, an application of \eqref{eq:fcb_many_gaps} (when $m\geq2$) yields
\begin{align*}
	| \cE_1|  &=  \biggl| V_N^{ - 3/2 }  \int_0^1
	 \sum_{n=0}^{N-1} 
	\sum_{m=0}^{N-1}   
	E[ F_1(  \tilde X_{ J_{n,m} }; n,m,u ) ]
 (1-u) \, du \biggr| \\
 &\le C  ( C_* + 1) L^4 V_N^{-2}(Nl_N^{2}+N^3 l_N^{-\theta}).
\end{align*}

\noindent\textbf{Bound on $\cE_2$.}
First, in the first sum in the definition of $\cE_2$, for $m=0,1$ we just bound the resulting sum by $CL^4V_N^{-2}Nl_N^3$. In the second sum,  for $k=1$ we bound the resulting sum by $CL^3NV_N^{-4}l_N$. In the third sum for $m=1$ and $k=2$ we bound the sum by $CL^3Nl_N^4V_N^{-2}$. Otherwise, we use our mixing condition as explained in what follows.

Similar to \eqref{eq:multiple_corr_nonconv}, we have 
\begin{align}\label{eq:pair_corr_nonconv}
	|	E(  \check X_{n} \check X_{k}  ) | \le ( C_* + 1  ) L^2 l_N^{-\theta}
\end{align}
if $d_N(n,k)\geq 2$.
Next, for integers $0 \le n < N$, $2 \le m < N$, and $2 \le k < N$, we define
\begin{align*}
	F_2(\tilde x_{J_{n,k}} ; n,k ) = \delta_f(  V_N^{-1/2} s_{n,k-1},  V_N^{-1/2} s_{n,k}  ) V_N^{-1/2} x_n,
\end{align*}
and, if $3 \le k < N$, we define
\begin{align*}
\tilde{F}_2(\tilde x_{J_{n,m, k}} ; n,m,k )  =	\sum_{j \in [n]_m}  \delta_{f}(  V_N^{-1/2}  s_{n,m(k-1)},  V_N^{-1/2} s_{n, mk}  ) V_n^{-1/2}  x_j,
\end{align*}
where 
$$
J_{n,m, k} =  [n]_m \cup\{0\leq i<N: d_N(i,n)\geq m(k-1)\} 
$$
Then, using $f \in \fF(C_0)$ we obtain
\begin{align*}
	\Vert F_2(  \cdot  ; n,k )   \Vert_{\Lip} \le  C N  L^2  V_N^{-1} 
	\quad \text{and} \quad 
		\Vert \tilde{F}_2(  \cdot  ; n,k )   \Vert_{\Lip} \le  C L^2N^2  V_N^{-1}.
\end{align*}
Consequently, \eqref{eq:fcb_many_gaps} together with the triangle inequality implies
\begin{align*}
	|\cE_2| &\le  \sum_{n=0}^{N-1} \sum_{m=0}^{N-1}  
	| E[ \theta_f( W_{n,m} , W  )] 
	E[ Y_{n,m} Y_n ]  |
	+ \sum_{n=0}^{N-1} 
	\sum_{k=1}^{ N -1} |E[  \delta_f(  W_{n,k-1}, W_{n,k}  ) Y_n ]
	E[ Y_n^2 ] |
	\\ 
	&\qquad +
	\sum_{n=0}^{N-1} \sum_{m=1}^{N-1} 
	\sum_{j \in [n]_m} 
	|E[  
	\delta_{f}(   W_{n,m},  W_{n, 2m}  )  Y_j  ]
	E[ Y_n Y_{n,m} ]| \\
	&\qquad +
	\sum_{n=0}^{N-1} \sum_{m=1}^{N-1} 
	\sum_{j \in [n]_m}  \sum_{k=3}^{ N -1}   
	|E[  
	\delta_{f}(   W_{n,m(k-1)},  W_{n, mk}  )  Y_j  ]
	E[ Y_n Y_{n,m} ]|  \\
	&\le CL^3V_N^{-2}Nl_N^4+  C L^4 (C_* + 1)   N^6l_N^{-\theta}
	V_N^{-2}
	+  L^4  (C_* + 1) N^5l_N^{-\theta}  V_N^{-2}  \\
	&\qquad + C L^4(C_* + 1) N^6l_N^{-\theta}V_N^{-2} 
	+ C_\theta L^4   (C_* + 1)N^7l_N^{-\theta}V_N^{-2}  \\
	&\le  CL^3V_N^{-2}Nl_N^4+C(C_* + 1) L^4 N^7l_N^{-\theta}V_N^{-2}.
\end{align*}

\medskip

\noindent\textbf{Bounds on $\cE_3,\ldots,\cE_6$.}
The remaining terms four terms are estimated in the
same manner as $\cE_1$ and $\cE_2$, by applying
\eqref{eq:fcb_many_gaps} with suitable choices of $F$. 

For $\cE_3$, in the first sum, when $m=1$ and $k=2$ we just bound the term by $CL^3NV_N^{-3/2}l_N^3$. In the second sum when $k=1$ we bound the sum by $CL^3NV_N^{-3/2}l_N$. For the rest of the terms we define 
\begin{align*}
	&F_{3,1}(  \tilde x_{ \bar{J}_{ n,m,k } }; n,m,k,u  ) \\
	&=\delta_f (   V_N^{-1/2} s_{n,mk}  + u  V_N^{-1/2}\check x_{n, \langle m(k-1), mk \rangle } ,  V_N^{-1/2} s_{n, 2mk} ) 
		V_N^{-1/2} \check x_{n, \langle m(k-1), mk \rangle } 
	\check x_{n,m} \check x_n \\
	&\qquad - E[    \delta_f(  W_{n,mk}  + u Y_{n, \langle m(k-1), mk \rangle } ,  W_{n, 2mk} ) 
	Y_{n, \langle m(k-1), mk \rangle }  ] \check x_{n,m} \check x_n,
\end{align*}
where 
\begin{align*}
	\bar{J}_{n,m,k} =  \{ n\}\cup\{0\leq i<N: d(i,n)\geq  m(k-1)\} 
	\cup (  [n]_m \setminus [n]_{m-1} ),
\end{align*}
and 
\begin{align*}
	F_{3,2}(\tilde x_{ J_{n,k} } ; n,k,u ) &= 
		\delta_f(  V_N^{-1/2} s_{n,k} + u V_N^{-1/2}  \check x_{n,k},  V_N^{-1/2}  s_{n, 2k} )   V_N^{-1/2}  \check x_{n,k} 
	x_n^2 \\
	&\qquad -  	E[
		\delta_f( W_{n,k} + u Y_{n,k},  W_{n, 2k} )    Y_{n,k} ]
	(\check{x}_n)^2.
\end{align*}
These functions satisfy
\begin{align*}
	\Vert F_{3,1}(  \cdot ; n,m,k,u  ) \Vert_{ \Lip } \le C L^4  V_{N}^{-1}N^3 
	\quad \text{and} \quad 
	\Vert F_{3,2}( \cdot ; n,k ) \Vert_{\Lip} \le  C L^4 V_N^{-1}N.
\end{align*}
Applying \eqref{eq:fcb_many_gaps}, using the triangle inequality with the distance $d_N$ and bounding the number of variable by $N$ 
therefore gives
\begin{align*}
	|\cE_3| &= \biggl| \int_0^1   V_N^{-1}  \sum_{n=0}^{N-1} 
	\sum_{m=1}^{N-1} \sum_{k=2}^{N-1}
	 E [ F_{3,1}( \tilde X_{ \bar{J}_{ n,m,k } }; n,m,k,u  ) ] 
	\, du \\ 
	&\qquad+ \int_0^1  V_N^{-1}  \sum_{n=0}^{N-1} \sum_{k=1}^{N-1} 
	E[  F_{3,2}(\tilde X_{ J_{n,k} } ; n,k, u )   ]
	\, du \biggr|
	\le CL^3NV_N^{-2}l_N^4+ C_\theta C_* L^4N^7 V_N^{-2}l_N^{-\theta}.
\end{align*}

Similar arguments show that 
$$
\max(|\mathcal E_4|, |\mathcal E_5|,|\mathcal E_6|)\leq CL^4 C_*N^8l_N^{-\theta}V_N^{-2}+CL^4NV_N^{-2}l_N^3.
$$
We omit the details to avoid repetition.
\end{proof}

\subsection{A dynamical scheme for verifying \eqref{eq:fcb}}\label{Sec3.2}
We construct a dynamical scheme to streamline the verification of \eqref{eq:fcb}, which we apply in the subsequent sections to random LSV maps and random piecewise expanding maps.

We assume that $(M_k,d_k)$ is a metric space 
equipped with the sigma-algebra of Borel sets $\cB_k$ 
for each $k \ge 1$. Further, we  assume that $(T_n)_{n \ge 1}$ is a sequence 
of measurable maps $T_n : M_n \to M_{n+1}$.
For $n \ge 0$, we set
$$
\cT^n = T_n \circ \cdots \circ T_1,
$$
with the convention that $\cT^0 = \text{id}_{M_1}$.
Let $\mu$ be a Borel probability measure on $M_1$. 

As in Section \ref{Sec3.1}, 
for a  function $F : \prod_{i=1}^k M_{n_i} \to \bR$ where
$n_1, \ldots, n_k \in \bN$, we define
\[
\text{Lip}(F) = \max_{1 \leq i \leq k} \sup_{x \in  \prod_{i=1}^k M_{n_i} } 
\sup_{  \substack{ t \neq t' \\  t,t' \in M_{n_i}  }   } 
\frac{|F(x(t/i)) - F(x(t'/i))|}{ d_{ n_i }(t,t') }.
\]
Here, $x(t/i) \in \prod_{i=1}^k M_{n_i}$ denotes the vector obtained from $x$ by replacing its $i$ component with $t$. 
We say that $F$ is separately Lipschitz continuous if $\text{Lip}(F) < \infty$, and we set 
$$
\|F\|_{\Lip } = \|F\|_\infty + \Lip (F), \quad \Vert F \Vert_{\infty} = \sup_{ x \in   \prod_{i=1}^k M_{n_i}} | F(x) |.
$$

\begin{proposition}\label{prop:scheme} 
	Suppose there exist constants $A_i, \theta_i > 0$ 
	and $r_i \ge 0$
	for $i \in \{1,2,3\}$ such 
	that the following hold.
	Whenever $k \ge 2$, $1 \le l < k$, and $0 \le n_1 < \ldots < n_k < N$ are 
	integers, and $F :  \prod_{i=1}^k M_{n_i + 1} \to \bR$ is
	a bounded
	separately Lipschitz
	continuous, there exists a countable (mod $\mu$) measurable partition $\cQ$ of $M_1$ such that the following hold:
	\begin{itemize}
		\item[(A1)] There exists a decomposition $\cQ = \cQ^{(g)} \cup \cQ^{(b)}$ where the subcollections 
		$\cQ^{(g)}$ and $\cQ^{(b)}$ are disjoint, and 
		\begin{align*}
			\sum_{ B \in  \cQ^{(b)} } \mu(B) \le A_1 n_{ l + 1  }^{ r_1 } ( n_{l + 1} - n_l )^{ - \theta_1 }.
		\end{align*}
		\item[(A2)] For any $B \in \cQ^{(g)}$, 
		\begin{align}\label{eq:a2}
			\sum_{j = 1}^{l} \textnormal{diam}( \cT^{n_j} B) \le A_2 n_{l + 1}^{r_2} ( n_{l + 1} - n_l  )^{ - \theta_2 }.
		\end{align}
		\item[(A3)] For each $B \in \cQ^{(g)}$ there exists $c_B \in B$ such that 
		\begin{align*}
			&\biggl| \sum_{ B \in \cQ^{(g)} } 
			\int_{M_1} \left (\tilde H(c_B, \cdot)-\int_{M_1} H(c_B, y)\, d\mu(y)\right ) \circ \cT^{n_{l+1}}
			\cdot 
			(\mathbf 1_B-\mu(B))\, d\mu \biggr| 
			\\
			&\le A_3 \Vert F \Vert_{ \Lip } n_{l+1}^{r_3} (n_{l+1} - n_l)^{ - \theta_3 },
		\end{align*}
		where $\tilde H$ is the function that satisfies $\tilde H(x,  \cT^{ n_{l + 1} } ( y) ) = H(x,y)$, 
		and 
		$$
		H(x,y) = F( \cT^{n_1}(x), \ldots, \cT^{n_l}(x), 
		 \cT^{n_{l + 1}}(y), \ldots, \cT^{n_k}(y)
		 ).
		$$
	\end{itemize}
	Then, there exists an absolute constant $C > 0$ such that
	\begin{align}\label{eq:fcb-scheme}
		\begin{split}
			&\biggl| \int_{M_1} H(x,x) \, d\mu(x) - \iint_{ M_1^2 } H(x,y) \, d \mu(x) \, d \mu(y) \biggr| \\
			&\le  C \Vert F \Vert_\infty A_1 n_{ l + 1  }^{ r_1 } ( n_{l + 1} - n_l )^{ - \theta_1 } 
			+   C  \Vert F \Vert_\Lip A_2 n_{l + 1}^{r_2} ( n_{l + 1} - n_l  )^{ - \theta_2 } 
			\\
			&+ C A_3  \Vert F \Vert_{ \Lip } n_{l+1}^{r_3} (n_{l+1} - n_l)^{ - \theta_3 }.
		\end{split}
	\end{align}
\end{proposition}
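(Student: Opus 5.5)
We decompose the covariance-type quantity by conditioning the ``past'' variable $x$ on the partition $\cQ$. Write
$$
D := \int_{M_1} H(x,x)\,d\mu(x) - \iint_{M_1^2} H(x,y)\,d\mu(x)\,d\mu(y)
= \sum_{B\in\cQ} \biggl[ \int_B H(x,x)\,d\mu(x) - \int_B\int_{M_1} H(x,y)\,d\mu(y)\,d\mu(x)\biggr],
$$
which is legitimate since $\cQ$ is a countable partition mod $\mu$ and $H$ is bounded. We then split the sum into $B\in\cQ^{(b)}$ and $B\in\cQ^{(g)}$. For the bad part, each bracket is bounded by $2\Vert F\Vert_\infty\mu(B)$, so by (A1) its total contribution is at most $2\Vert F\Vert_\infty A_1 n_{l+1}^{r_1}(n_{l+1}-n_l)^{-\theta_1}$. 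This gives the first term of \eqref{eq:fcb-scheme}.

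For a good atom $B$, we replace $x$ by the chosen point $c_B$ in the first $l$ coordinates. For $x\in B$ and any $y$, changing the first $l$ arguments one at a time and using separate Lipschitz continuity gives
$$
|H(x,y)-H(c_B,y)| \le \Lip(F)\sum_{j=1}^l d_{n_j+1}(\cT^{n_j}x,\cT^{n_j}c_B) \le \Lip(F)\sum_{j=1}^l \textnormal{diam}(\cT^{n_j}B).
$$
By (A2) this is at most $\Vert F\Vert_\Lip A_2 n_{l+1}^{r_2}(n_{l+1}-n_l)^{-\theta_2}$, uniformly in $y$. We apply this substitution both in $\int_B H(x,x)\,d\mu$ (with $y=x$) and in $\int_B\int H(x,y)\,d\mu(y)\,d\mu(x)$. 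Each replacement costs at most $\mu(B)$ times this bound, and summing over $B$ with $\sum_B\mu(B)\le1$ produces the second term of \eqref{eq:fcb-scheme}, up to a factor $2$.

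After the substitution, the good part becomes
$$
\sum_{B\in\cQ^{(g)}}\biggl[\int_{M_1} H(c_B,x)\mathbf 1_B(x)\,d\mu(x) - \mu(B)\int_{M_1} H(c_B,y)\,d\mu(y)\biggr].
$$
We now check that this matches the expression in (A3). Since $H(c_B,x)=\tilde H(c_B,\cT^{n_{l+1}}x)$, each bracket equals
$$
\int_{M_1}\Bigl(\tilde H(c_B,\cdot)-\int H(c_B,y)\,d\mu(y)\Bigr)\circ\cT^{n_{l+1}}\cdot(\mathbf 1_B-\mu(B))\,d\mu .
$$
To see this, expand the product. The subtracted constant integrated against $\mathbf 1_B-\mu(B)$ gives zero. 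The term $-\mu(B)\int H(c_B,x)\,d\mu(x)$ is exactly the second piece of the bracket. Assumption (A3) then bounds the good sum by the third term of \eqref{eq:fcb-scheme}.

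Collecting the three estimates gives the claim with an absolute constant such as $C=2$. The main point needing care is that $\tilde H$ is well defined, i.e.\ $H(x,y)$ depends on $y$ only through $\cT^{n_{l+1}}y$. This holds because $\cT^{n_i}=T_{n_i}\circ\cdots\circ T_{n_{l+1}+1}\circ\cT^{n_{l+1}}$ for $i>l$. A second point is the measurability and interchange of sums and integrals over the countable partition. Both are handled by boundedness of $F$ and dominated convergence.
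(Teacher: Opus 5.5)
Your proposal is correct and follows the paper's proof essentially step for step: the same split of the partition sum into good and bad atoms, the bound $2\Vert F\Vert_\infty\mu(B)$ on each bad atom combined with (A1), the replacement $x\mapsto c_B$ in the first $l$ coordinates controlled by (A2), and the identification of the remaining good sum with the expression in (A3). Your explicit check that the subtracted constant integrates to zero against $\mathbf 1_B-\mu(B)$ is the one algebraic step the paper leaves implicit.
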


\begin{proof}
	First, we decompose 
	\begin{align*}
		&\int_{M_1} H(x,x) \, d\mu(x) - \iint_{ M_1^2 } H(x,y) \, d \mu(x) \, d \mu(y) \\
		&= \sum_{ B \in \cQ } \int_{B} \biggl[   H(x,x) - \int_{M_1} H(x,y) \, d \mu(y)  \biggr] \, d \mu(x) \\
		&= \sum_{ B \in \cQ^{(g)} } \int_{B} \biggl[   H(x,x) - \int_{M_1} H(x,y) \, d \mu(y)  \biggr] \, d \mu(x) \\
		&+ \sum_{ B \in \cQ^{(b)} } \int_{B} \biggl[   H(x,x) - \int_{M_1} H(x,y) \, d \mu(y)  \biggr] \, d \mu(x) =: \cI_g + \cI_b.
	\end{align*}
	From (A1) we obtain 
	\begin{align*}
		|\cI_b| \le 2 \Vert F \Vert_\infty \sum_{ B \in \cQ^{(b)} } \mu(B) \le  2 \Vert F \Vert_\infty  A_1 n_{ l + 1  }^{ r_1 } ( n_{l + 1} - n_l )^{ - \theta_1 }.
	\end{align*}
	Next,  using (A2) and the Lipschitz continuity of $F$ separately in each coordinate, we obtain for any $x \in B$, $B \in \cQ^{(g)}$, 
	\begin{align*}
		&\biggl|  H(x,x) - \int_{M_1} H(x,y) \, d \mu(y) - \biggl\{    H(c_B,x) - \int_{M_1} H(c_B,y) \, d \mu(y)  \biggr\} \biggr| \\
		&\le 2 \sum_{ j = 1 }^\ell \Lip(F)  d_{ n_j + 1 }( \cT^{n_j}(x), \cT^{n_j}(c_B) )
		\le 2   \Lip(F)  \sum_{ j = 1 }^\ell  \text{diam} ( \cT^{n_j} B )  \\
		&\le 2 \Vert F \Vert_{ \Lip } A_2 n_{l + 1}^{r_2} ( n_{l + 1} - n_l  )^{ - \theta_2 }.
	\end{align*}
	Therefore,
	\begin{align*}
		\cI_g = \sum_{ B \in \cQ^{(g)} } \int_{B} \biggl[   H(c_B,x) - \int_{M_1} H(c_B,y) \, d \mu(y)  \biggr] \, d \mu(x) 
		+ O(  \Vert F \Vert_{ \Lip } A_2 n_{l + 1}^{r_2} ( n_{l + 1} - n_l  )^{ - \theta_2 } ),
	\end{align*}
	where the constant in the error term is absolute. Finally, by (A3), 
	\begin{align*}
		&\sum_{ B \in \cQ^{(g)} } \int_{B} \biggl[   H(c_B,x) - \int_{M_1} H(c_B,y) \, d \mu(y)  \biggr] \, d \mu(x) \\
		&= \sum_{ B \in \cQ^{(g)} } 
		\int_{M_1} \left (\tilde H(c_B, \cdot)-\int_{M_1} H(c_B, y)\, d\mu(y)\right ) \circ \cT^{n_{l+1}}
		\cdot 
		(\mathbf 1_B-\mu(B))\, d\mu \\
		&= O( A_3 \Vert F \Vert_{\Lip} n_{l+1}^{r_3} (n_{l+1} - n_l)^{ - \theta_3 } ).
	\end{align*}
	Gathering the estimates above we obtain \eqref{eq:fcb-scheme}.
\end{proof}

\begin{remark}\label{rem:fcb_ds} 
Suppose that (A1)-(A3) in Proposition \ref{prop:scheme} are in force. Let $(f_n)_{ 0 \le n \le N }$
be a sequence of Lipschitz continuous and bounded functions
$f_n : M_{n+1} \to \bR$ such that 
$\sup_{0 \le n \le N }\Vert f_n \Vert_{ \Lip } \le L_N$ for $L_N \ge 1$.
Define 
$$
X_n = f_n \circ \cT^n, \quad 0 \le n < N.
$$
Let $0 \le n_1 < \ldots < n_k < N$ be a sequence of integers. If 
$F : [-L_N, L_N]^k \to \bR$ is separately Lipschitz continuous, then
the function $\tilde{F} : \prod_{i=1}^k M_{n_i + 1} \to \bR$ defined by
$$
\tilde{F}(x_1, \ldots, x_k) = F( f_{ n_1 }(x_1), \ldots, f_{n_k}(x_k)   )
$$
is separately Lipschitz continuous with 
$$
\Vert \tilde{F} \Vert_{\Lip} \le L_N \Vert F \Vert_{\Lip}.
$$
In particular, we see that \eqref{eq:fcb} holds with
$$
C_*(N) = C A L_N N^r, \quad A = \max\{A_1,A_2,A_3\}, \quad 
r = \max\{  r_1, r_2, r_3 \}, \quad 
\theta = \min \{  \theta_1, \theta_2, \theta_3  \},
$$
where $C > 0$ is an absolute constant.
\end{remark}

Estimates of the form \eqref{eq:fcb-scheme} have been established
for various classes of stationary and nonstationary
chaotic dynamical systems; see
\cite{LNN25, LS17, F22, LNN26, LT26}.
These include both uniform estimates, with $r_1=r_2=r_3=0$,
and nonuniform estimates, with some of the $r_i$ positive.

In particular, under suitable parameter assumptions, uniform
estimates with $\theta_i>3$ for $1\leq i\leq 3$ are available
for Lipschitz observables of sequential
compositions of LSV maps \cite{L17}
(see also Remark \ref{rem:lsv_uniform}), Sinai billiards with
convex scatterers \cite{LS17}, intermittent maps with unbounded
derivatives \cite{LNN25}, and random Lasota--Yorke maps with
holes \cite{LNN26}.
By the previous remark, these systems satisfy
\eqref{eq:fcb} with $\theta>3$ and $C_*(N)$ bounded uniformly in
$N$, provided that the observables have uniformly bounded Lipschitz norms.
Thus, for these systems, Theorem \ref{thm:w2_sums} gives
$$
\cW_2\bigl(\cL(W_N),N(0,1)\bigr)=O(N^{-1/2}),
$$
provided that $V_N\geq cN$ for some $c>0$ and all sufficiently
large $N$.
In Sections \ref{Sec3.3} and \ref{Sec3.4}, we derive
nonuniform estimates of the form \eqref{eq:fcb-scheme}
for two classes of random dynamical systems with weakly dependent
driving processes:
random LSV maps and random piecewise non-uniformly expanding
maps on the unit interval.

\subsection{Application to random LSV maps}\label{Sec3.3}

Let $\sigma : \Omega \to \Omega$ be an invertible and measure preserving
map on a probability space 
$(\Omega, \cF, \bP)$. We assume that $(\sigma, \bP)$ is ergodic.

For $\eta \in (0,1)$, let $T_\eta : M \to M$ be the LSV map with parameter $\eta$ on $M := [0,1]$:
\begin{align}\label{eq:lsv}
T_\eta(x)=\begin{cases}
	x(1+2^\eta x^\eta) & 0\le x<\frac 1 2 \\
	2x-1 & \frac 1 2 \le x\le 1.
\end{cases}
\end{align}
Let $\beta : \Omega \to (0,1)$ be a measurable map and, for 
each each $\omega \in \Omega$, let $T_\omega = T_{\beta(\omega)}$ be the LSV map with 
parameter $\beta(\omega)$. Set 
\begin{align}\label{eq:random_lsv}
	T_\omega^n:=T_{\sigma^{n-1}\omega}\circ \ldots \circ T_{\sigma \omega}\circ T_\omega, \quad \omega \in \Omega,  \ n\in \mathbb N.
\end{align}
In the same manner, denoting by $\mathcal L_\omega \colon L^1(m)\to L^1(m)$ the transfer operator associated to $T_\omega$ and $m$, where $m$ is the Lebesgue measure on $[0,1]$, set
\[
\mathcal L_\omega^n:=\mathcal L_{\sigma^{n-1}\omega}\circ \ldots \circ \mathcal L_{\sigma \omega}\circ \mathcal L_\omega, \quad \omega \in \Omega,  \ n\in \mathbb N.
\]
We also set $T_\omega^0 := \Id$ and $\mathcal L_\omega^0 :=\Id$ for $\omega \in \Omega$.
In the sequel, we assume that
\begin{align}\label{eq:ass_1}
	\overline \beta:=\esssup_{\omega \in \Omega}\beta(\omega)< 1
\end{align}
and that, for some $\gamma \in (0, \overline \beta]$, 
\begin{align}\label{eq:ass_2}
	b_0 := \bP( \beta(\omega) \le \gamma  ) > 0.
\end{align}
Note that, letting $\underline \beta =\essinf_{\omega\in\Omega} \beta(\omega)$, \eqref{eq:ass_2} holds for every  $\underline\beta <\gamma<1$.

We will assume that the random environment is mixing in a suitable sense. To this end, we define 
the $\alpha$-mixing coefficients for the stationary sequence 
$(  \beta \circ \sigma^{i}  )_{i \in \mathbb{Z}}$ as follows:
$$
\alpha(n) = \sup
\{ | \bP(A \cap B) - \bP(A) \bP(B) | \: : \: 
A \in \mathcal{F}_{-\infty, i}, \: 
B \in \mathcal{F}_{i+n, \infty}, \: i \in \bZ
\}
$$
Here, $\cF_{-\infty, i}$ is the sub-sigma-algebra generated by $( \beta \circ \sigma^j )_{j \le i}$ and 
$\cF_{i,\infty}$ is the sub-sigma-algebra generated by $( \beta \circ \sigma^j )_{j \ge i}$.

\medskip 

\noindent\textbf{Assumption.} Let $p \ge 1$. Throughout we assume that $\gamma < 1/2$, 
and that 
\begin{align}\label{eq:mixing_env}
	\alpha(n) = O( n^{ - q  }   ) \quad \text{holds with $q >  \frac{p}{\gamma}  + 1$.}
\end{align}

We define 
$$
\tau_\omega(x) = \inf \{ n \ge 1 \: : \:  T_\omega^n(x) \in Y  \}, \quad Y = [1/2, 1].
$$
Further, we denote by $\cP_\omega$ the canonical partition (mod $m$) of $[0,1]$ into open subintervals
such that $\tau_\omega$ is constant on each $a \in \cP_\omega$. That is, $\cP_\omega$ consists 
of intervals 
$$
I_n^{(0)}(\omega) := (x_{n+1}(\omega),  x_n(\omega) ) \quad \text{and} \quad 
I^{(1)}_n(\omega) := (y_{n+1}(\omega),  y_n(\omega) ), \quad n \ge 1,
$$
where 
\begin{align*}
	x_n(\omega) =   T_\omega^{-n}(1) , \quad y_n(\omega) = 
	\frac{ x_{n-1}(\sigma \omega) + 1 }{2}, \quad n \ge 1.
\end{align*}
Here, the preimages are taken with respect to the left branch of $T_\omega$ and
$x_0(\omega) = 1$. We denote by $\tau_\omega(a)$ the constant value of 
$\tau_\omega$ on $a \in \cP_\omega$. Note that $T_\omega^n$ maps 
$I_n^{(\theta)}(\omega)$ diffeomorphically onto $(1/2,1)$ for $\theta = 0,1$.

Recall from \cite[Proposition 9]{DGTS} that the cocycle of maps $(T_\omega)_{\omega \in \Omega}$ admits a unique random a.c.i.m $\mu$ on $\Omega \times [0, 1]$, which can be identified with a family of probability measures $(\mu_\omega)_{\omega \in \Omega}$ on $[0, 1]$ such that
\[
T_\omega^*\mu_\omega=\mu_{\sigma \omega} \quad \text{for $\mathbb P$-a.e. $\omega \in \Omega$.}
\]
Moreover, $d\mu_\omega=h_\omega \, dm$ with $h_\omega \in \mathcal C_*$ for some $a>1$, where $\mathcal C_*=\mathcal C_*(a)$ is a cone as in~\cite[Section 2.2]{DGTS}. That is, $\mathcal C_*$ consists of  $\phi \in C^0(0, 1]\cap L^1(m)$ such that $\phi \ge 0$, $\phi$ is decreasing, $X^{\beta+1}\phi$ is increasing (where $X$ denotes the identity map), and 
\[
\int_0^x \phi (t)\, dt\le ax^{1- \overline\beta}\int_0^1 \phi (t)\, dt \quad x\in (0, 1].
\]

Throughout the remainder of this section $C, C_1, C_2, \ldots$ denote positive constants
depending only on the random dynamical systems, whose values may vary from line to line.

\begin{lemma} There exist $K_1, K_2 \in L^p(\Omega)$ such that, for $\bP$-a.e. $\omega \in \Omega$,
	\begin{align}\label{eq:tails}
		m( \tau_\omega \ge n ) \le K_1(\omega) n^{-1/\gamma} \quad \text{and} \quad 
		\mu_\omega( \tau_\omega \ge n ) \le K_2(\omega) n^{1 -1/\gamma}.
	\end{align}
	
\end{lemma}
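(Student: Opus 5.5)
The plan is to reduce both tail bounds to a single deterministic estimate for LSV maps along the random orbit, and then control the random constant by the mixing assumption \eqref{eq:mixing_env}.

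\textbf{Reduction to the first visit of a good parameter.} The set $\{\tau_\omega \ge n\}$ lies inside the interval $[0, x_{n-1}(\omega)]$, up to a right-branch interval of comparable size. So the main task is to estimate $x_n(\omega) = T_\omega^{-n}(1)$, where the preimages are taken along the left branches $T_{\sigma^j\omega}$. The left branch $x\mapsto x(1+2^{\beta}x^{\beta})$ moves points away from $0$ more slowly when $\beta$ is larger. For a single parameter $\gamma$ one has the classical bound $x_n \asymp n^{-1/\gamma}$.

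\textbf{The deterministic comparison.} Since every parameter satisfies $\beta\le\bar\beta<1$, each backward step contracts. Consider the steps $j<n$ with $\beta(\sigma^j\omega)\le\gamma$, and let $N_n(\omega)$ be their number. I will show that $x_n(\omega)\le C\,N_n(\omega)^{-1/\gamma}$. The argument is the standard one from \cite{DGTS}. Set $z\mapsto z^{-\gamma}$. Under a step with parameter $\beta\le \gamma$ this quantity increases by at least $c>0$, because the inverse branch satisfies $y\approx x - 2^\beta x^{1+\beta}$ and $x^{1+\beta}\ge x^{1+\gamma}$ for $x\le1$. Under every other step this quantity does not decrease.

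This gives the Lebesgue tail bound
\[
m(\tau_\omega\ge n)\le C\,x_{n-1}(\omega)\le C\,N_{n}(\omega)^{-1/\gamma}.
\]
I will then define
\[
K_1(\omega)=C\sup_n \frac{n^{1/\gamma}}{\max(N_n(\omega),1)^{1/\gamma}}.
\]
To get the $\mu_\omega$ tail, I will use $h_\omega\in\mathcal C_*$. The cone condition gives $\int_0^x h_\omega\,dm\le a x^{1-\bar\beta}$. This is weaker than needed, so I will instead use the monotonicity of $X^{\beta+1}h_\omega$ together with the explicit form $h_\omega(x)\lesssim x^{-\beta}$ near the relevant scale. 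Computing $\int_0^{x_{n}} h_\omega$ piecewise then gives $\mu_\omega(\tau_\omega\ge n)\le K_2(\omega)n^{1-1/\gamma}$. If the cone bound is too crude, I will instead write $\mu_\omega(\tau_\omega\ge n)=\sum_{k\ge n}m_{\sigma^{-k}\omega}(\tau\ge k)$-type via the tower / first-return decomposition (the density at $\omega$ is the pushforward from past returns). Summing the Lebesgue tails over $k\ge n$ then gives exponent $1-1/\gamma$, with $K_2(\omega)$ built from $K_1(\sigma^{-k}\omega)$.

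\textbf{Integrability of $K_1$ (main obstacle).} I need $K_1\in L^p$, which requires
\[
\bP(N_n<b_0 n/2)\ \text{summable against } n^{p/\gamma-1}.
\]
Here $N_n$ is a sum of indicators $\mathbf 1\{\beta\circ\sigma^j\le\gamma\}$ with mean $b_0$. This is a large deviation estimate for an $\alpha$-mixing sequence. I will use a moment bound of Rio/Yokoyama type, or a blocking argument with $\alpha(n)=O(n^{-q})$. This yields
\[
\bP\bigl(|N_n-b_0n|>b_0n/2\bigr)\le C n^{-q},
\]
up to a lower-order factor. Then
\[
E K_1^p\lesssim \sum_k 2^{kp/\gamma}\,\bP(\exists\, n:\ N_n< n/ 2^{k}\cdots),
\]
and this is finite exactly when $q>p/\gamma+1$, which is \eqref{eq:mixing_env}. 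Obtaining the polynomial deviation bound with the sharp exponent $q$ from $\alpha$-mixing alone is the step I expect to be hardest. If needed, I will use a Fuk–Nagaev inequality for strongly mixing sequences (Rio) to get it. Integrability of $K_2$ then follows from that of $K_1$ and stationarity of $\sigma$.
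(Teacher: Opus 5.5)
\textbf{The gap is in the $\mu_\omega$-tail; the rest holds up.} For $y\le 1/2$ and $\beta\le\gamma$, the inverse left branch raises $z^{-\gamma}$ by at least $1-2^{-\gamma}$. Also $m(\tau_\omega\ge n)=x_n(\omega)+\tfrac12 x_{n-1}(\sigma\omega)$ involves only the forward parameters $\beta(\sigma^j\omega)$, $0\le j<n$. Your $K_1$ is dominated by $C(1+N_\ve(\omega))^{1/\gamma}$, whose $p$-th moment is finite given a deviation bound $\bP(|S^\sigma_n-b_0n|>\ve b_0 n)=O(n^{-s})$ with $s>p/\gamma+1$; this is what the paper imports from \cite{DHL25}.

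\textbf{Your first route for $\mu_\omega$ cannot work.} The cone $\cC_*$ only gives $h_\omega(x)\le a x^{-\overline\beta}$, hence $\mu_\omega(\tau_\omega\ge n)\lesssim n^{-(1-\overline\beta)/\gamma}$. A bound $h_\omega(x)\lesssim x^{-\beta}$ with a better exponent does not follow from anything you have. The behaviour of $h_\omega$ near $0$ is governed by long stretches of past parameters $\beta(\sigma^{-j}\omega)$, not by a single $\beta$, and such a bound is essentially the statement to be proved.

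\textbf{Your second route has the right idea but an unjustified last step.} Decomposing by the last visit to $Y$, and using $h\le C$ on $Y$, gives
\[
\mu_\omega(\tau_\omega\ge n)\le C\sum_{k\ge 0} m\bigl(Y\cap\{\tau_{\sigma^{-k}\omega}\ge n+k\}\bigr).
\]
If you bound the $k$-th term by $K_1(\sigma^{-k}\omega)(n+k)^{-1/\gamma}$, you need one $K_2\in L^p$ with $\sum_k K_1(\sigma^{-k}\omega)(n+k)^{-1/\gamma}\le K_2(\omega)n^{1-1/\gamma}$ for all $n$. Stationarity controls $\Vert K_1\circ\sigma^{-k}\Vert_{L^p}$ for each $k$, so by Minkowski it handles each fixed $n$. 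It does not give a random constant uniform in $n$, since $\sup_k K_1(\sigma^{-k}\omega)=\infty$ a.s.

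\textbf{What is missing is control of good parameters in \emph{backward} time.} This is exactly why the paper defines $N_\ve$ with both $\delta=\sigma$ and $\delta=\sigma^{-1}$. With it, the number of good indices in the window $[-k,n)$ is at least $b_0(1-\ve)(n+k)-O(1+N_\ve(\omega))$, uniformly in $k$. So each term is at most $C(1+N_\ve(\omega))^{1/\gamma}(n+k)^{-1/\gamma}$. Summing gives $K_2\le C(1+N_\ve)^{1/\gamma}$, which is in $L^p$ by the same deviation bounds applied in both time directions. For $p>1$ you could instead dominate the decreasing-kernel average $n^{1/\gamma-1}\sum_k K_1(\sigma^{-k}\omega)(n+k)^{-1/\gamma}$ by the ergodic maximal function of $K_1$. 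That fails at $p=1$, which the assumption allows.
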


\begin{proof} 	For $\delta \in \{ \sigma, \sigma^{-1} \}$, define 
	$S_n^{\delta}(\omega) = \sum_{i=0}^{n-1} \psi \circ  \delta^i(\omega)$, where  $\psi(\omega) = \mathbf{1}_{  (0, \gamma] } ( \beta(\omega) )$. Then
	$$
	S_n^\sigma(\omega) = \# \{  0 \le j \le n - 1 \: : \:  \beta( \sigma^j \omega ) \le \gamma  \}.
	$$
	For $\ve \in (0,1/2)$, define 
	\begin{align}\label{eq:N_eps}
		N_\ve(\omega) = \max_{ \delta \in \{\sigma, \sigma^{-1} \} } \max \biggl( \biggl\{ n \ge 1 \: : \:  n^{-1} S_n^{\delta }(\omega) \notin [ b_0(1-\ve), b_0(1+\ve) ]  \biggr\} \cup \{0\} \biggr).
	\end{align}
	It follows from \cite[Proposition A4]{DHL25} that for some constant $C > 0$
    depending only on the random dynamical system,
	\begin{align}\label{eq:tails}
		m( \tau_\omega \ge n ) \le  C( 1 + N_\ve(\omega) )^{1/\gamma}  n^{-1/\gamma} \quad \text{and} \quad 
		\mu_\omega( \tau_\omega \ge n ) \le 
		C ( 1 + N_\ve(\omega) )^{1/\gamma - 1} n^{1 -1/\gamma}.
	\end{align}
	Further, as in the proof of \cite[Corollary 27]{DHL25} we see that 
	\eqref{eq:mixing_env} implies 
    \begin{align}\label{eq:Ne_Lp}
        K_1(\omega) := C( 1 + N_\ve(\omega) )^{1/\gamma} \in L^p \quad \text{and}
        \quad 
        K_2(\omega) := C( 1 + N_\ve(\omega) )^{1/\gamma - 1} \in L^p.
    \end{align}
\end{proof}

For $j \ge 1$ and $x \in [0,1]$, we define recursively
$$
\tau_\omega^{(j)}(x) = \inf \{ n > \tau_\omega^{(j-1)} (x) \: : \: T_\omega^n(x) \in Y \},
$$
where $\tau_\omega^{(0)}(x) = 0$. Thus, $\tau_\omega^{(j)}$ is the $j$th return time to $Y$.  Further, 
we define $L_{\omega, n}$ as the number of returns to $Y$ by time $n$:
$$
L_{\omega, n}(x) = \max \{ 0 \le j \le n \: : \:  \tau_\omega^{(j)}(x) \le n \}.
$$
By definition we have $\tau_\omega^{ (L_{\omega, n}) } \le n < \tau_\omega^{ (L_{\omega, n} + 1) }$. Let 
$$
\cJ_\theta = \begin{cases}
	[0,1/2], &\text{if $\theta = 0$,} \\
	[1/2, 1], &\text{if $\theta = 1$.}
\end{cases}
$$
We consider the (mod $m$) partition of $M$ formed by all intervals of the form
\begin{align}\label{eq:cylinder}
	\begin{split}
		C_{\omega, n}^{(\theta)}(\ell_1, \ldots, \ell_{q+1}) &= \{   L_{\omega, n} = q  \} \cap \bigcap_{j=1}^{q+1} \{ \tau_\omega^{(j)}  = \ell_j \} \cap  \cJ_\theta  \\
		&= I_{ \ell_1 }^{(\theta)}(\omega) \cap \bigcap_{j=1}^q  T_\omega^{ - \ell_j } I^{(1)}_{ \ell_{j + 1} - \ell_j } (  \sigma^{\ell_j} \omega  ).
	\end{split}
\end{align}
Here $\theta \in \{0,1\}$, $1 \le \ell_1 < \ldots < \ell_q \le n < \ell_{q+1}$ and $0 \le q \le n$. Moreover, if $q = 0$, then 
$\ell_1 > n$ and $C_{\omega, n}^{(\theta)}(\ell_1) = I^{(\theta)}_{\ell_1}(\omega)$. Let us denote this partition by $\cC_{\omega, n}$.

\begin{lemma} There exists $K_1 \in L^p(\Omega)$ such that, 
	for any $B \in \cC_{\omega,n}$, and for $\bP$-a.e. $\omega \in \Omega$,
	\begin{align}\label{eq:estim_cylinders}
		|B| \le  \max_{ 0 \le k \le n } K_1(\sigma^k \omega)  n^{ - 1 / \gamma }.
	\end{align}
	Moreover, whenever $0 \le \ell < n$ and $B \in \cC_{\omega,n}$, 
	\begin{align}\label{eq:estim_cylinders_image}
		| T_\omega^\ell B | \le  
		\max_{ \ell \le k \le n } K_1(\sigma^{ k } \omega) 
		(n - \ell )^{ - 1 / \gamma  }.
	\end{align}
	
\end{lemma}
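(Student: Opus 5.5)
We reduce both estimates to the tail bound \eqref{eq:tails} for the Lebesgue measure of long first-return intervals, applied along the fibres $\sigma^k\omega$, together with the structure \eqref{eq:cylinder} of the cylinders. Fix $B = C^{(\theta)}_{\omega,n}(\ell_1,\ldots,\ell_{q+1})$. The key observation is that the last block of time before $n$ is governed by a single excursion. Namely, $T_\omega^{\ell_q}$ (with $\ell_0 := 0$ if $q=0$) maps $B$ injectively into a single interval of $\cP_{\sigma^{\ell_q}\omega}$, whose points have first return time $\ell_{q+1}-\ell_q$. This interval is $I^{(1)}_{\ell_{q+1}-\ell_q}(\sigma^{\ell_q}\omega)$ if $q\ge1$, or $I^{(\theta)}_{\ell_1}(\omega)$ if $q=0$.

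Two facts then need to be established. First, $|I^{(\theta)}_j(\omega')| \le m(\tau_{\omega'}\ge j) \le K_1(\omega') j^{-1/\gamma}$ by \eqref{eq:tails}. Second, inverse branches of returns to $Y$ contract Lebesgue measure. Each LSV branch has derivative $\ge1$, so every inverse branch of $T^{\ell}_{\omega}$ is a contraction. Hence for any $0\le\ell\le\ell_q$,
\[
|T^{\ell}_\omega B| \le |T_\omega^{\ell_q} B| \le \bigl|I_{\ell_{q+1}-\ell_q}(\sigma^{\ell_q}\omega)\bigr| \le K_1(\sigma^{\ell_q}\omega)(\ell_{q+1}-\ell_q)^{-1/\gamma},
\]
where the first inequality uses that $T^{\ell_q-\ell}_{\sigma^\ell\omega}$ is injective on $T^\ell_\omega B$ and expanding. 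This alone is not enough, because $\ell_{q+1}-\ell_q$ may be small. We therefore need to use the full chain: the whole orbit segment up to time $n$ consists of excursions of lengths $\ell_1,\ell_2-\ell_1,\ldots$, and $B$ is the pullback of the last one.

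The natural fix is a case split. If $q=0$, then $\ell_1>n$ and $|B|\le K_1(\omega)n^{-1/\gamma}$ directly. If $q\ge1$, then we use that the last excursion straddles $n$ and that the preceding returns land in $Y$, where the map $2x-1$ expands by $2$. The plan is to bound $|B|$ by the length of the last excursion interval multiplied by the product of contraction factors from the earlier excursions. Each full excursion starting from $Y$ and returning to $Y$ has derivative at least $2$ on $Y$. Thus
\[
|B| \le 2^{-q}\,\bigl|I^{(1)}_{\ell_{q+1}-\ell_q}(\sigma^{\ell_q}\omega)\bigr|.
\]
Then either some excursion has length $\ge n/(2q)$ or similar, or $q$ is large enough that $2^{-q}\lesssim n^{-1/\gamma}$. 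More concretely, if $q \ge C\log n$, we are done with the constant absorbed into $K_1\ge1$. Otherwise $\ell_{q+1}-\ell_q$ might still be small, so instead we pick the longest excursion $j$ among the blocks up to $n$, which has length $\gtrsim n/q \ge n/(C\log n)$. We bound $|B|\le |T^{\ell_{j-1}}_\omega B| \le K_1(\sigma^{\ell_{j-1}}\omega)(n/q)^{-1/\gamma}$.

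This loses a logarithm, and removing it is \textbf{the main obstacle}. For it I would first try to use the sharper distortion estimate. On an excursion of length $t$, the derivative of $T^t$ is $\gtrsim t^{1/\gamma+1}$ (this is the source of the $m(\tau\ge n)\sim n^{-1/\gamma}$ tail for $\beta\le\gamma$ fibres). The product of such derivatives over excursions with $\sum t_i \ge n$ is then $\gtrsim \prod t_i^{1/\gamma}\ge n^{1/\gamma}$ up to $K_1$ factors. Failing that, I would appeal to the corresponding statement from \cite[Proposition A4]{DHL25}, from which \eqref{eq:tails} was taken.

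The image estimate \eqref{eq:estim_cylinders_image} then follows by the same argument applied to $T^\ell_\omega B$. This set is contained in a cylinder of $\cC_{\sigma^\ell\omega,n-\ell}$, since the return structure after time $\ell$ refines it. We therefore apply \eqref{eq:estim_cylinders} at base point $\sigma^\ell\omega$ with horizon $n-\ell$, which gives the maximum over $\ell\le k\le n$.
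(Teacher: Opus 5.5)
Your treatment of $q=0$ is correct, and so are your per-excursion ingredients: $|I^{(\theta)}_t(\omega')|\le K_1(\omega')t^{-1/\gamma}$ from \eqref{eq:tails}, non-contraction of the LSV branches, and the factor $2$ at each return to $Y$. Your reduction of \eqref{eq:estim_cylinders_image} to \eqref{eq:estim_cylinders} at base point $\sigma^\ell\omega$ with horizon $n-\ell$ is also correct and matches the paper.

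The gap is where you place it. For $q\ge1$ you only prove $|B|\lesssim\max_{0\le k\le n}K_1(\sigma^k\omega)(n/\log n)^{-1/\gamma}$. This happens because you use the contraction gained at returns and the tail of a long excursion in separate cases, not in one chain of inequalities.

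Neither remedy you suggest closes this. The derivative heuristic fails as stated, for three reasons:
\begin{enumerate}
\item A lower bound on $(T^t)'$ throughout an excursion interval does not follow from \eqref{eq:tails} alone. It needs bounded distortion, and even then it only gives $\gtrsim K_1(\cdot)^{-1}t^{1/\gamma}$.
\item Multiplying such bounds over $q+1$ excursions produces a product of $q+1$ random factors $K_1(\sigma^{\ell_j}\omega)^{-1}$, not a single $\max_k K_1(\sigma^k\omega)$.
\item The inequality $\prod_i t_i\ge n$ is false when many excursions are short. The cylinder of points whose orbit stays in $(3/4,1)$ up to time $n$ has all $t_i=1$ and $q=n$, so all the contraction must come from the returns.
\end{enumerate}
Deferring to \cite{DHL25} is not a proof.

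The paper combines the two effects multiplicatively. By uniform expansion at returns and the distortion estimate of \cite[Proposition 3.12]{KL21}, on each excursion interval $I$ of length $t$ one has $\sup_I 1/(T^t)'\le\min\{\lambda^{-1},C|I|\}\le\min\{\lambda^{-1},CK_1 t^{-1/\gamma}\}$. It iterates over all $q+1$ excursions and uses the $K_1$-bound for only one excursion of length $\ge n/(q+1)$. This gives $|B|\le C\max_k K_1(\sigma^k\omega)\,\lambda^{-q}(q+1)^{1/\gamma}n^{-1/\gamma}$, and $\sup_q\lambda^{-q}(q+1)^{1/\gamma}<\infty$.

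In fact your own tools suffice if you replace ``longest excursion'' by a weighted pigeonhole. Put $\ell_0=0$ and $t_j=\ell_{j+1}-\ell_j$. Then $T^{\ell_j}_\omega$ is injective on $B$, every factor of $(T^{\ell_j}_\omega)'$ is $\ge1$, and the factors at the returns $\ell_1,\dots,\ell_{j-1}$ equal $2$. Hence for every $0\le j\le q$
\[
|B|\le 2^{1-j}\,|T^{\ell_j}_\omega B|\le 2^{1-j}K_1(\sigma^{\ell_j}\omega)\,t_j^{-1/\gamma}.
\]
Since $\sum_{j=0}^q t_j=\ell_{q+1}>n$ and $\sum_{j\ge0}(1-2^{-\gamma})2^{-j\gamma}=1$, some $j$ satisfies $t_j\ge(1-2^{-\gamma})2^{-j\gamma}n$. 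For that $j$, $|B|\le 2(1-2^{-\gamma})^{-1/\gamma}K_1(\sigma^{\ell_j}\omega)n^{-1/\gamma}$, and $\ell_j\le n$. This yields \eqref{eq:estim_cylinders}, after enlarging $K_1$ by a constant, with no distortion estimate at all. Your derivation of \eqref{eq:estim_cylinders_image} then goes through unchanged.
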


\begin{proof} Let $x, x' \in B$ where $B = C_{\omega,n}^{(\theta)}(\ell_1, \ldots, \ell_{q+1})$ is a set of the form \eqref{eq:cylinder}. 
	If $q = 0$, then both \eqref{eq:estim_cylinders} and \eqref{eq:estim_cylinders_image} follow from \eqref{eq:tails}. 
	
	Let $q > 0$. We have that
	\begin{align*}
		|x - x'| \le  \sup_{ \xi \in I_{\ell_1}^{(\theta)}(\omega) }  \frac{1}{ ( T_\omega^{\ell_1} )' \xi  } | T_\omega^{\ell_1}(x) - T_\omega^{\ell_1}(x')   |.
	\end{align*}
	By uniform expansion at returns we have 
	$$
	\sup_{ \xi \in I_{\ell_1}^{(\theta)}(\omega) }  \frac{1}{ ( T_\omega^{\ell_1} )' \xi  }  \le \lambda^{-1}
	$$
	for some $\lambda > 1$ independent of $\omega$. On the other hand, it follows from the bounded distortion estimate in \cite[Proposition 3.12]{KL21}) combined with \eqref{eq:tails} that, for some constant $C > 0$,
	$$
	\sup_{ \xi \in I_{\ell_1}^{(\theta)}(\omega) }  \frac{1}{ ( T_\omega^{\ell_1} )' \xi  }  \le  C | I_{\ell_1}^{(\theta)}(\omega) | \le C K_1(\omega) \ell_1^{-1 / \gamma}.
	$$
	Consequently, 
	\begin{align}\label{eq:estim_ind}
		|x - x'| \le \min\{  \lambda^{-1},   C K_1(\omega) \ell_1^{-1 / \gamma}   \} | T_\omega^{\ell_1}(x) - T_\omega^{\ell_1}(x')   |,
	\end{align}	
	where $T_\omega^{\ell_1}(x), T_\omega^{\ell_1}(x') \in I_{  \ell_{2} - \ell_1 }^{(1)}(  \sigma^{ \ell_1 } \omega  )$. Iterating \eqref{eq:estim_ind} 
	we obtain 
	\begin{align*}
		|x - x'| \le \min\{  \lambda^{-1},   C K_1(\omega) \ell_1^{-1 / \gamma}   \}  \cdot \prod_{j=1}^q  \min\{  \lambda^{-1},   C K_1( \sigma^{ \ell_j } \omega) ( \ell_{j + 1} - \ell_j )^{-1 / \gamma}   \}.
	\end{align*}
	Either $\ell_1 \ge n / (q + 1)$ or $\ell_{j + 1} - \ell_j \ge n / (q+1)$ for some $1 \le j \le q$. Therefore, setting $\ell_0 = 0$, we have 
	\begin{align*}
		|x - x'| \le C  \max_{ 0 \le j \le q  } K_1( \sigma^{ \ell_j} \omega )  \cdot \lambda^{-q}  \biggl(  \frac{n}{ q + 1}   \biggr)^{ - 1 / \gamma  }
		\le C \max_{ 0 \le k \le n  } K_1( \sigma^{ k } \omega ) n^{-1/\gamma}.
	\end{align*}
	This proves \eqref{eq:estim_cylinders}.
	
	For \eqref{eq:estim_cylinders_image}, observe that 
	\begin{align*}
		&T_\omega^\ell ( C_{\omega, n}^{(\theta)}(\ell_1, \ldots, \ell_{q+1} )  ) \\
		&\subset 
		\begin{cases}
			\{  \tau_{ \sigma^\ell \omega }^{(1)} = \ell_{j+1} - \ell, \ldots,  \tau^{ ( q + 1 - j  ) } = \ell_{q + 1} - \ell   \}, & \ell_j \le \ell < \ell_{j+1}, \: 1 \le j \le q, \\
			\{  \tau^{ ( 1) }_{ \sigma^\ell \omega } = \ell_1 - \ell, \ldots,   \tau^{(q + 1)}_{ \sigma^\ell \omega  } = \ell_{q+1} - \ell \}, &\ell < \ell_1,
		\end{cases}
	\end{align*}
	where $\ell_{q} - \ell  \le n - \ell$ and 
	$\ell_{q+1} - \ell > n - \ell > 0$. In particular, $T_\omega^\ell ( C_{\omega, n}^{(\theta)}(\ell_1, \ldots, \ell_{q+1} )  )$ is contained in a set belonging to $\cC_{ \sigma^\ell \omega, n - \ell}$ and \eqref{eq:estim_cylinders_image} follows from \eqref{eq:estim_cylinders}.
\end{proof}

\begin{proposition}\label{prop:fcb_lsv} 
	Assume \eqref{eq:mixing_env}. Let  $F : [0,1]^k \to \bR$ be separately Lipschitz
    continuous, where $k \ge 2$. Let $0 \le n_1 < \ldots < n_k$ and define 
	\begin{align}\label{eq:def_H_lsv}
	H_\omega(x, y) =F(T_{\omega}^{n_1}(x), \ldots, T_{\omega}^{n_l}(x), T_{\omega}^{n_{l+1}}(y), \ldots, T_\omega^{n_k}(y)).
	\end{align}
	Let $\delta > 0$. Then, there exists 
	$\tilde K_\delta \in L^p(\Omega)$ 
    such that for $\bP$-a.e. 
	$\omega \in \Omega$, 
	\begin{align}\label{eq:fcb_lsv}
		\begin{split}
			&\biggl| \int_M H(x,x) \, d\mu_\omega(x) - \iint_{ M^2 } H(x,y) \, d \mu_\omega(x) \, d \mu_\omega(y) \biggr| \\
			&\le     \Vert F \Vert_{ \Lip } \tilde K_{ \delta}(\omega) n_{l + 1}^{  1 / p + \delta   } ( n_{l + 1} - n_l )^{ 1 - 1 / \gamma }.
		\end{split}
	\end{align}
\end{proposition}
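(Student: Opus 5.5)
We will derive \eqref{eq:fcb_lsv} from Proposition \ref{prop:scheme}, applied fiberwise with $M_j = [0,1]$, $T_j = T_{\sigma^{j-1}\omega}$ (so $\cT^n = T^n_\omega$) and $\mu = \mu_\omega$. We verify (A1)--(A3) with constants that are random but controlled in $L^p$.

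\textbf{Choice of partition.} Take $\cQ = \cC_{\omega,n_l}$, the partition into cylinders of return times up to time $n_l$. Call $B \in \cQ$ \emph{bad} if its last return before $n_l$ happened long ago. Concretely, in the notation \eqref{eq:cylinder}, $B$ is bad if $\ell_{q+1} - n_l$ is large, or more precisely if $T^{n_l}_\omega B$ is not yet ``renewed'' within time $(n_{l+1}-n_l)/2$. Otherwise $B$ is good.

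\textbf{(A1) and (A2).} For (A1), $\mu_\omega$ of the union of bad sets is bounded by $\mu_{\sigma^{j}\omega}(\tau \ge (n_{l+1}-n_l)/2)$ for some $j \le n_l$. This uses invariance $T^*\mu_\omega = \mu_{\sigma\omega}$ and the second estimate in \eqref{eq:tails}, giving $\max_{k\le n_{l+1}} K_2(\sigma^k\omega)\,(n_{l+1}-n_l)^{1-1/\gamma}$. For (A2), apply \eqref{eq:estim_cylinders_image} to each $j \le l$: $\operatorname{diam}(T^{n_j}_\omega B) \le \max_k K_1(\sigma^k\omega)\,(n_l - n_j)^{-1/\gamma}$. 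This estimate is summable over $j$ except for $j$ close to $l$. The terms near $j=l$ are the reason for refining the partition at time $n_l + (n_{l+1}-n_l)/2$ instead of at $n_l$. Concretely, we use $\cC_{\omega,m}$ with $m = \lfloor (n_l+n_{l+1})/2\rfloor$, so that every diameter is $\lesssim K\,(n_{l+1}-n_l)^{-1/\gamma}$ up to the number $l \le n_{l+1}$ of summands. This loses a polynomial factor in $n_{l+1}$, which we absorb later.

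\textbf{(A3).} On a good $B$, the set $T^{m}_\omega B$ is a full branch image onto (part of) a return interval. Hence the pushed density $\mathcal L^{m}_\omega(\mathbf 1_B h_\omega)/\mu_\omega(B)$ lies in the cone $\mathcal C_*$, by the bounded distortion of \cite{KL21} and the cone invariance of \cite{DGTS}. The (A3) sum is then a correlation between $\tilde H(c_B,\cdot)$ evaluated at time $n_{l+1}$ and the difference of two cone densities at time $m$. The quenched memory-loss estimate for $\mathcal C_*$-densities from \cite{DHL25} (polynomial decay with rate $(n_{l+1}-m)^{1-1/\gamma}$ and a random constant $K(\sigma^m\omega)$) bounds this by $\Vert F\Vert_\Lip\,K(\sigma^m\omega)\,(n_{l+1}-n_l)^{1-1/\gamma}$. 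The function $\tilde H(c_B,\cdot)$ is Lipschitz in the remaining coordinates but composed with later iterates. We handle this exactly as in the proof of Proposition \ref{prop:scheme}: pair it only against the density at time $n_{l+1}$, and use that Lipschitz observables against differences of cone densities decay.

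\textbf{Collecting random constants (main obstacle).} Each bound carries a factor of the form $\max_{0\le k\le n_{l+1}} K(\sigma^k\omega)$ with $K \in L^p$. For this we use the elementary fact: if $K\in L^p$, then $\max_{k\le n} K\circ\sigma^k \le \tilde K_\delta(\omega)\,n^{1/p+\delta}$ with $\tilde K_\delta := \sup_{k\ge0} K(\sigma^k\omega)(k+1)^{-1/p-\delta} \in L^p$. This holds since $E\,\tilde K_\delta^p \le \sum_k E K^p (k+1)^{-1-p\delta} < \infty$ by stationarity. Applying this produces exactly the factor $n_{l+1}^{1/p+\delta}$. The delicate point is arranging (A2) without a further polynomial loss in $l$. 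For this we try to sum the diameters geometrically using the $\lambda^{-q}$ factor from the proof of \eqref{eq:estim_cylinders}, rather than crudely. If that fails, we accept an extra factor of $n_{l+1}$ and adjust $\delta$ or the exponent accordingly. Finally, Proposition \ref{prop:scheme} gives \eqref{eq:fcb_lsv}, since $1-1/\gamma$ is the worst of the exponents $\theta_i$.
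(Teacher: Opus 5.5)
Your architecture matches the paper's: Proposition \ref{prop:scheme} with a return-time cylinder partition at an intermediate time, bad cylinders given by a late first return, \eqref{eq:estim_cylinders_image} for (A2), renewal plus the memory-loss bound of \cite{DHL25} for (A3), and the maximal bound for $K\circ\sigma^k$, which you prove correctly. However, two steps fail as written.

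\textbf{(A2).} The point you flag as the main obstacle is left open, and neither remedy works. Between $n_j$ and $n_l$ there may be no return to $Y$, and $T_\omega'(0)=1$, so $\textnormal{diam}(T^{n_j}_\omega B)$ need not decay geometrically in $l-j$. The fallback gives $n_{l+1}^{1+1/p+\delta}(n_{l+1}-n_l)^{-1/\gamma}$, which is strictly weaker than \eqref{eq:fcb_lsv} and cannot be absorbed into $n_{l+1}^{\delta}$. The missing observation is elementary. The $n_j$ are distinct integers, so for a partition time $n_*>n_l$ one has $n_*-n_j\ge (n_*-n_l)+(l-j)$. Since $1/\gamma>2$, this gives $\sum_{j=1}^{l}(n_*-n_j)^{-1/\gamma}\le\sum_{i\ge n_*-n_l}i^{-1/\gamma}\le C(n_*-n_l)^{1-1/\gamma}$. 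This is exactly where the exponent $1-1/\gamma$ (rather than $-1/\gamma$) in the statement comes from, and there is no loss in $l$.

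\textbf{(A3) and the good/bad split.} At the partition time $m$ the set $T^m_\omega B$ is not a full branch. As in the proof of \eqref{eq:estim_cylinders_image}, it lies in the level set $\{\tau_{\sigma^m\omega}=\ell_{q+1}-m\}$, which is bounded away from $0$. So the pushed density is not decreasing on $(0,1]$ and is not in $\mathcal C_*$. Cone invariance does not help either, since $\mathbf 1_B h_\omega\notin\mathcal C_*$.

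The renewal happens at the first return $\ell_{q+1}>m$. Then $T^{\ell_{q+1}}_\omega B=Y$, and $\cL^{\ell_{q+1}}_\omega(\mathbf 1_B h_\omega)/\mu_\omega(B)$ is supported on $Y$ with a uniformly bounded log-Lipschitz constant. This uses distortion from \cite{KL21}, expansion at returns, and the fact that $h_\omega$ is Lipschitz and bounded below on $Y$. This is still not a $\mathcal C_*$-density. The paper instead uses that this measure and $\mu_{\sigma^{\ell_{q+1}}\omega}$ are regular in the sense of \cite[Definition A2]{DHL25}, with a common tail of order $(N_\ve+1)^{1/\gamma}n^{1-1/\gamma}$. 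Then \cite[equation (A.10)]{DHL25} bounds the total variation of the difference of their push-forwards over the remaining $n_{l+1}-\ell_{q+1}$ steps. This $L^1$ bound is paired with $\tilde H(c_B,\cdot)$ through $\Vert F\Vert_\infty$ only. That is what is needed, because $\tilde H(c_B,\cdot)$ involves later iterates and has no controlled Lipschitz constant.

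For the right rate you need $n_{l+1}-\ell_{q+1}\gtrsim n_{l+1}-n_l$ on good cylinders and $\ell_{q+1}-m\gtrsim n_{l+1}-n_l$ on bad ones. So the split must be measured from the partition time. With your midpoint partition and bad meaning ``not renewed within $(n_{l+1}-n_l)/2$ of $n_l$'', every cylinder is bad, since $\ell_{q+1}>m$ always. The paper uses three scales: the partition $\cC_{\omega,n_*}$ with $n_*=n_l+\round{(n_{l+1}-n_l)/3}$, and $B$ good if and only if $\ell_{q+1}\le n_l+\round{2(n_{l+1}-n_l)/3}$.
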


\begin{proof} We verify (A1)-(A3) in Proposition \ref{prop:scheme}. Without loss of generality we assume that $n_{l + 1} - n _l > 3$. We define 
	$$
	n_* = n_l + \round{ \frac{ n_{l + 1} - n_l  }{3} }.
	$$
	We let $\cQ_\omega$ be the partition of $M$ consisting of all intervals $C_{\omega, n_*}^{(\theta)}(\ell_1, \ldots, \ell_{q+1})$ as 
	in \eqref{eq:cylinder}. Set 
	$$
	\ell_{\#} = n_l + \round{   \frac{  2 ( n_{l + 1} - n_l ) }{3} }.
	$$
	We decompose $\cQ_\omega = \cQ_\omega^{(g)} \cup \cQ_\omega^{(b)}$ where $\cQ_\omega^{(g)}$ consists of all 
	partition elements $C_{\omega, n_*}^{(\theta)}(\ell_1, \ldots, \ell_{q+1})$  with $\ell_{q + 1} \le \ell_\#$ and 
	$\cQ^{(b)}_\omega$ consists of all 
	partition elements $C_{\omega, n_*}^{(\theta)}(\ell_1, \ldots, \ell_{q+1})$  with $\ell_{q + 1} > \ell_\#$.

    In the remainder of the proof, $C$ denotes a positive constant depending only 
    on the random dynamical system, whose value may vary from line to line.
	
	\medskip 
	
	\noindent\textbf{(A1):} We have
	\begin{align*}
		\sum_{ B \in \cQ_\omega^{(b)} } \mu_\omega(B) 
		\le   \mu_\omega(  \tau_\omega^{ ( L_{ \omega, n_* } + 1 ) }  \ge \ell_\#  ).
	\end{align*}
	Next, observe that 
	$$
	\{  \tau_\omega^{ ( L_{ \omega, n_* } + 1 ) }  \ge \ell_\#   \} \subset   T_\omega^{-n_*}  \{   \tau_{  \sigma^{n_*} \omega  } \ge  \ell_\# - n_*   \}.
	$$
	Let $\delta > 0$. Using \eqref{eq:tails} we obtain 
	\begin{align*}
		\sum_{ B \in \cQ_\omega^{(b)} } \mu_\omega(B)  &\le \mu_{ \sigma^{n_*} \omega }(   \tau_{  \sigma^{n_*} \omega  } \ge  \ell_\# - n_*   ) \le K_2(  \sigma^{n_*} \omega ) ( \ell_\# - n_* )^{ 1 - 1 / \gamma  } \\
		&\le C K_{2, \delta}(  \omega ) n_*^{  1/p + \delta  } ( n_{l+1}  - n_l )^{ 1 - 1 / \gamma  } 
		\le C K_{2, \delta}(  \omega ) n_{l + 1}^{  1/p + \delta  } ( n_{l+1}  - n_l )^{ 1 - 1 / \gamma  },
	\end{align*}
	where $K_{2, \delta} \in L^p$. Here, the second-to-last inequality follows 
    from \cite[Lemma 2]{DH CMP}.
	
	\medskip 
	
	\noindent\textbf{(A2):}  Let $B \in \cQ_\omega^{(g)}$. Since $n_l < n_* \le n_{l + 1}$, using 
	\eqref{eq:estim_cylinders_image} 
	we obtain 
	\begin{align*}
		\sum_{ j = 1 }^{ l }  \text{diam}( T_\omega^{ n_j } B  ) &\le \sum_{j=1}^l C   \max_{ 0 \le k \le n_* } K_1( \sigma^k \omega ) (n_* - n_j)^{-1/\gamma} \\
		&\le C \max_{0 \le k \le n_*} K_1( \sigma^k \omega ) (n_{l + 1} - n_l)^{ 1 - 1 / \gamma } \\
		&\le C K_{1, \delta}(\omega) n_{l+1}^{ 1/p + \delta } (n_{l + 1} - n_l)^{ 1 - 1 / \gamma },
	\end{align*}
	where $K_{1, \delta} \in L^p$.
    
    \medskip
    
	\noindent\textbf{(A3):} Fix any $c_B \in B$ for each $B \in \cQ_\omega^{(g)}$. We have 
	\begin{align*}
		\cI &:= \biggl| \sum_{ B \in \cQ_\omega^{(g)} } 
		\int_M \left (\tilde H(c_B, \cdot)-\int_M H(c_B, y)\, d\mu_\omega(y)\right ) \circ T_\omega^{n_{l+1}}
		\cdot 
		(\mathbf 1_B-\mu_\omega(B))\, d\mu_\omega \biggr| 
		\\
		&\le 2 \Vert F \Vert_\infty  \sum_{ B \in \cQ_\omega^{(g)} } \mu_\omega(B) \Vert \cL_{\omega}^{ n_{ l + 1 } } (   h_\omega - \tilde h_{\omega, B}   ) \Vert_{L^1(m)},
	\end{align*}
	where 
	$$
	\tilde h_{\omega, B} = \frac{ \mathbf{1}_B   h_\omega   }{ \mu_\omega( B )  }.
	$$
	We write 
	$$
	\cL_{\omega}^{ n_{ l + 1 } } (   h_\omega - \tilde h_{\omega, B}   ) =  \cL_{ \sigma^{ \ell_{q + 1} } \omega }^{ n_{l + 1} - \ell_{q + 1}   }  \cL_{\omega}^{ \ell_{  q + 1 } } (   h_\omega - \tilde h_{\omega, B}   ).
	$$
	The density $\cL_{\omega}^{ \ell_{  q + 1 } } (   \tilde h_{\omega, B}  )$ is supported on $Y$ and 
	\begin{align*}
		\cL_{\omega}^{ \ell_{  q + 1 } } (   \tilde h_{\omega, B}  ) (y) =  \frac{  h_\omega( y_{ \omega, B }  )   }{ ( T_\omega^{ \ell_{q + 1} } )' y_{\omega, B}  } 
		\frac{1}{ \mu_\omega(B) },
	\end{align*}
	where $y_{ \omega, B } \in B$ is the unique preimage of $y \in Y$ under $T_\omega^{ \ell_{q + 1} }$. Let us denote the corresponding
    measure by $\nu_{ \sigma^{ \ell_{q+1} } \omega, B  }$:
    $$
    \nu_{ \omega, B }(A) = \int_A \cL_{ \sigma^{ -\ell_{q+1} } \omega}^{ \ell_{  q + 1 } } (   \tilde h_{ \sigma^{ -\ell_{q+1} } \omega, B}  ) \, d m
    $$
    
    Let $y,z \in Y$. Then,
	\begin{align*}
		&| \log  \cL_{\omega}^{ \ell_{  q + 1 } } (   \tilde h_{\omega, B}  ) (y) - \log \cL_{\omega}^{ \ell_{  q + 1 } } (   \tilde h_{\omega, B}  ) (z) | \\
		&\le | \log h_\omega (  y_{\omega, B} )  -   \log h_\omega (  z_{\omega, B} )   | + | \log  ( T_\omega^{ \ell_{q + 1} } )' y_{\omega, B} - 
		\log ( T_\omega^{ \ell_{q + 1} } )' z_{\omega, B} 
		| =: I + II.
	\end{align*}
	Recalling that $h_\omega \ge b > 0$ is Lipschitz continuous on $Y$ with Lipschitz constant independent of $\omega$, we have 
	\begin{align*}
		I \le b^{-1} \Lip(h_\omega |_Y) | y_{\omega, B} - z_{\omega, B} | \le   C b^{-1} \Lip(h_\omega |_Y) |  y - z | \le C |y - z|.
	\end{align*}
	Moreover, using the chain rule we obtain
	\begin{align*}
		&II \le | \log (T_\omega^{ \ell_1 }  )' ( y_{\omega, B} )  -  \log (T_\omega^{ \ell_1 }  )' ( z_{\omega, B} )  |
		\\&+ \sum_{j=1}^q  | \log (T_\omega^{ \ell_{j+1} - \ell_j  }  )' ( T_\omega^{ \ell_j } y_{\omega, B} )  -  
		\log (T_\omega^{ \ell_{j+1} - \ell_j  }  )' ( T_\omega^{ \ell_j } z_{\omega, B} ) 
		|.
	\end{align*}
	The distortion bound in \cite[Proposition 3.12]{KL21} together with uniform expansion at returns then implies
	\begin{align*}
		II \le \sum_{j=1}^{q+1} C |  T_\omega^{ \ell_j } (y_{\omega, B}) -  T_\omega^{ \ell_j } (z_{\omega, B})  | \le  C \sum_{j=0}^{q} \lambda^{ - j } |y - z| \le C |y - z|.
	\end{align*}
	Now, it follows from \cite[Proposition A3]{DHL25}-(c) and \cite[Proposition 3.14]{KL21}
    that $\mu_\omega$ and
    $\nu_{\omega, B}$ are both
    regular
	in the sense of \cite[Definition A2]{DHL25} for $\bP$-a.e. $\omega \in \Omega$, provided that $K$ there is chosen sufficiently large (independent of $\omega$). Moreover, by \eqref{eq:tails},
    $$
    \nu_{\omega, B}( \tau_\omega \ge n ) \le C m( \tau_\omega \ge n  )
    \le 
    C(N_\ve(\omega) + 1 )^{1/\gamma} n^{ - 1 / \gamma}
    $$
    and
    $$
    \mu_\omega( \tau_\omega \ge n ) \le C (N_\ve(\omega) + 1 )^{1/\gamma - 1}  n^{1 - 1 / \gamma}.
    $$
    Consequently, in the terminology of \cite[Definition A2]{DHL25},
    $\mu_\omega$ and $\nu_{\omega, B}$ are regular with the same tail bound
    $$
    r_\omega(n) = C (N_\ve(\omega) + 1 )^{1/\gamma}  n^{1 - 1 / \gamma}.
    $$
    It follows by \cite[equation (A.10)]{DHL25} that 
    \begin{align*}
	&\Vert  \cL_{\omega}^{ n_{ l + 1 } } (   h_\omega - \tilde h_{\omega, B}   )  \Vert_{  L^1(m) } \\
    &\le 
    | ( T_{ \sigma^{ \ell_{q + 1}  } \omega  }^{ n_{ l + 1 } } )_* \nu_{ \sigma^{ \ell_{q + 1}  }  \omega, B} - ( T_{ \sigma^{ \ell_{q + 1}  } \omega  }^{ n_{ l + 1 } } )_* \mu_{ \sigma^{ \ell_{q + 1}  } \omega  }  | \\
    &\le C (N_\ve( \sigma^{ \ell_{q + 1}  } \omega) + 1 )^{1/\gamma}  (
    n_{ l + 1 } - \ell_{q+1}
    )^{1 - 1 / \gamma} = K_3( \sigma^{ \ell_{q + 1}  } \omega ) ( 
    n_{ l + 1 } - \ell_{q+1}
    )^{1 - 1 / \gamma} 
	\end{align*}
    where $|\cdot|$ denotes the total variation of signed measures 
    and $K_3 \in L^p(\Omega)$ by \eqref{eq:Ne_Lp}.
    Hence, for any $\delta > 0$,
    \begin{align*}
	&\Vert  \cL_{\omega}^{ n_{ l + 1 } } (   h_\omega - \tilde h_{\omega, B}   )  \Vert_{  L^1(m) } \le K_{3,\delta}( \omega ) \ell_{q + 1}^{1/p + \delta} ( n_{ l + 1 } - \ell_{q+1} )^{1 - 1 / \gamma},
	\end{align*}
    where $K_{3, \delta} \in L^p(\Omega)$.
    Finally, since $\ell_{q + 1} \le \ell_\#$, we conclude that 
	\begin{align*}
		|\cI| \le C \Vert F \Vert_{\Lip} K_{3, \delta}(\omega) n_{l + 1}^{  1 / p + \delta   } ( n_{l + 1} - n_l )^{ 1 - 1 / \gamma },
	\end{align*}
    
    The desired inequality now follows by Proposition \ref{prop:scheme}.
	
\end{proof}

Let $\varphi : \Omega \times M \to \bR$ be a measurable map such that 
\begin{itemize}
	\item $\int_M \varphi_\omega \, d \mu_\omega = 0$ 
	for $\bP$-a.e. $\omega \in \Omega$, 
	where $\varphi_\omega(x) = \varphi(\omega ,x)$, and 
	\item $\omega \mapsto \Vert \varphi_\omega \Vert_{ \Lip } \in L^r(\Omega)$ for some $r \ge 2$.
\end{itemize}
We define  
$$
S_{\omega, N} = \sum_{n=0}^{N-1} \varphi_{ \sigma^n \omega } \circ T_\omega^n, \quad V_{\omega, N} = \mu_\omega( S_{\omega, N}^2 ), 
\quad W_{\omega, N} = V_{\omega, n}^{-1/2} S_{\omega, N},
$$
provided that $V_{\omega, N} > 0$.

\begin{theorem}\label{thm:lsv} Assume that $\gamma < 1/4$. Then, for any $\delta > 0$, 
	\begin{align*}
		\cW_2(  \cL(  W_{\omega, N}  ), N(0,1)  ) \le C_\delta(\omega) 
		\biggl(  N^{ 1 + \frac3p + \frac3r + \delta } V_N^{-3/2} 
		+
		N^{\frac12 + \frac{1}{2p} + \frac{5}{2r} + \delta} V_N^{-1}
		   \biggr),
	\end{align*}    
	where $C_\delta \in L^q$ for $1/3q = 1/r + 1/p$.
\end{theorem}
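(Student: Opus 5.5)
We plan to derive Theorem \ref{thm:lsv} from Theorem \ref{thm:w2_sums}, using Proposition \ref{prop:fcb_lsv} to verify Assumption 1. We fix $\omega$ and $N$ and take $(S_N,\cA_N,P_N)=(M,\cB,\mu_\omega)$ with $X_{n,N}=\varphi_{\sigma^n\omega}\circ T^n_\omega$. These variables are centered because $(T^n_\omega)_*\mu_\omega=\mu_{\sigma^n\omega}$ and $\int\varphi_{\sigma^n\omega}\,d\mu_{\sigma^n\omega}=0$.

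\textbf{Step 1: constants for Assumption 1.} The natural bound is $L_N=\max_{0\le n<N}\Vert\varphi_{\sigma^n\omega}\Vert_{\Lip}\vee 1$. Since $\omega\mapsto\Vert\varphi_\omega\Vert_{\Lip}$ lies in $L^r$, the Borel--Cantelli argument behind \cite[Lemma 2]{DH CMP} gives, for each $\delta>0$, some $G_\delta\in L^r$ with $L_N\le G_\delta(\omega)N^{1/r+\delta}$. As in Remark \ref{rem:fcb_ds}, composing $F:[-L_N,L_N]^k\to\bR$ with the $\varphi_{\sigma^{n_i}\omega}$ yields a separately Lipschitz $\tilde F$ with $\Vert\tilde F\Vert_{\Lip}\le L_N\Vert F\Vert_{\Lip}$.

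\textbf{Step 2: decay exponent.} Proposition \ref{prop:fcb_lsv} applies to $\tilde F$, but with a nonautonomous observable, which is harmless: the proof only uses the separate Lipschitz property in each slot. Bounding $n_{l+1}\le N$, we obtain \eqref{eq:fcb} with
\[
C_*(N)=L_N\tilde K_\delta(\omega)N^{1/p+\delta}, \qquad \theta=1/\gamma-1 .
\]
The hypothesis $\gamma<1/4$ gives exactly $\theta>3$, which Theorem \ref{thm:w2_sums} requires.

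\textbf{Step 3: plug in and check integrability.} Theorem \ref{thm:w2_sums} then gives
\[
\cW_2\le C\bigl(L_N^3(C_*+1)NV_N^{-3/2}+L_N^2\sqrt{(C_*+1)N}\,V_N^{-1}\bigr).
\]
With the bounds above, the first term is of size $G_\delta^4\tilde K_\delta\,N^{1+4/r+1/p+5\delta}V_N^{-3/2}$. The second is of size $G_\delta^{5/2}\tilde K_\delta^{1/2}\,N^{1/2+5/(2r)+1/(2p)+\delta'}V_N^{-1}$, which matches the stated second exponent exactly.

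For the first term the crude count gives $N^{1+4/r+1/p}$, while the statement claims $N^{1+3/p+3/r}$. I would reconcile this by redistributing the factor $L_N$ hidden inside $C_*$: either estimate the correlation bounds in the $\zeta$ term using $\Vert F\Vert_{\Lip}$ without the extra $L_N$ factor, or control $\tilde K_\delta(\omega)\max_{k\le N}\ldots$ via the $L^p$ maximal bound $N^{1/p}$ applied three times. Either way, the product of the random constants must lie in $L^q$ with $1/(3q)=1/r+1/p$, and Hölder's inequality gives that when the random factor has the form $(G\cdot K)^3$ with $G\in L^r$ and $K\in L^p$. I would therefore absorb all $\omega$-dependence into $C_\delta(\omega)=C\bigl(G_\delta(\omega)\tilde K_\delta(\omega)+1\bigr)^3$, and if necessary trade the powers of $N$ between the two factors, taking the $\max$ over $k\le N$ as $N^{1/p}$ or $N^{1/r}$.

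\textbf{Main obstacle.} The main difficulty is this bookkeeping of the random constants. It means matching the exponents $3/p+3/r$ and $5/(2r)+1/(2p)$ exactly while keeping $C_\delta\in L^q$. It may require tracking separately, in each $\cE_i$, which factors of $L_N$ come from sup bounds and which come from Lipschitz constants, rather than applying Theorem \ref{thm:w2_sums} as a black box.
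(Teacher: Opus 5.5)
Your Steps 1--3 are exactly the paper's proof. The paper takes $L_N\le \hat K_\delta(\omega)N^{1/r+\delta}$ (your $G_\delta$), and from Remark \ref{rem:fcb_ds} and Proposition \ref{prop:fcb_lsv} it gets $C_*(N)=CN^{1/r+1/p+\delta}\tilde K_\delta(\omega)\hat K_\delta(\omega)$ and $\theta=1/\gamma-1>3$. It then simply invokes Theorem \ref{thm:w2_sums}. Your bookkeeping is correct, and the paper does not address the discrepancy you found. The black-box substitution gives the second term exactly as stated. The first term, however, comes out as $\hat K_\delta^4\tilde K_\delta N^{1+4/r+1/p+\delta}V_N^{-3/2}$. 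This implies the stated exponent $1+3/p+3/r$ and the stated integrability $1/q=3/r+3/p$ only when $1/r\le 2/p$.

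Of your two remedies, commit to the first; it closes the gap for all $p,r$. Only $\zeta$ needs it, because the first term of Theorem \ref{thm:w2_sums} comes solely from $|\zeta|$ (via \eqref{eq:multiple_corr}), while the $\cE_i$ already give the stated second term. Apply Proposition \ref{prop:fcb_lsv} directly to
$$\tilde F(y_1,y_2,y_3)=\varphi_{\sigma^n\omega}(y_1)\varphi_{\sigma^{n+m}\omega}(y_2)\varphi_{\sigma^{n+m+\ell}\omega}(y_3).$$
Here $\Vert\tilde F\Vert_\infty\le L_N^3$ and $\Lip(\tilde F)\le L_N^3$, so the extra factor $L_N$ from Remark \ref{rem:fcb_ds} is never incurred. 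Since the decoupled integral vanishes by centering,
\[
|E(X_nX_{n+m}X_{n+m+\ell})|\le 2\tilde K_\delta(\omega)N^{1/p+\delta}L_N^3\max\{m,\ell\}^{1-1/\gamma}.
\]
Summing as in the proof of Theorem \ref{thm:w2_sums} gives $|\zeta|\le C(1+\tilde K_\delta)\hat K_\delta^3N^{1+3/r+1/p+4\delta}V_N^{-3/2}$, which is stronger than claimed.

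Your alternative of trading $N^{1/r}$ against $N^{1/p}$ in maximal bounds does not work. The growth of $\max_{k<N}\Vert\varphi_{\sigma^k\omega}\Vert_{\Lip}$ is dictated by the $L^r$-integrability. Passing from $N^{4/r+1/p}$ to $N^{3/r+3/p}$ is merely a weakening, available exactly when $1/r\le 2/p$, i.e.\ when no fix is needed.

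Finally, take $C_\delta=C(1+\hat K_\delta)^3(1+\tilde K_\delta)^3$ rather than $C(G_\delta\tilde K_\delta+1)^3$. The latter need not dominate $\hat K_\delta^3\tilde K_\delta$ when $\tilde K_\delta$ is small. By H\"older the former lies in $L^q$ with $1/q=3/r+3/p$, and it dominates the random factors of both terms.
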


\begin{proof} Since $\omega \mapsto \Vert \varphi_\omega \Vert_{ \Lip } \in L^r(\Omega)$, 
\cite[Lemma 2]{DH CMP} implies
	$$\max_{  0 \le k \le n - 1 } \Vert \varphi_{ \sigma^k \omega } \Vert_{\Lip} \le \hat K_\delta (\omega) n^{ 1 / r + \delta }$$
    where $\hat K_\delta \in L^r(\Omega)$, for any $\delta > 0$
    . 
    Therefore, Remark \ref{rem:fcb_ds} together with Proposition \ref{prop:fcb_lsv}
    implies that \eqref{eq:fcb} holds for $\bP$-a.e. $\omega \in \Omega$ with
    $$
    C_*(N) = C N^{ 1 / r + 1/p + \delta } \tilde K_\delta (\omega)  \hat K_\delta (\omega)  , \quad 
    \theta = 1/\gamma - 1,
    $$
    where $\delta$ can be taken arbitrarily small and $C > 0$ is independent of 
    $\omega$.
	We obtain the desired estimate by applying Theorem \ref{thm:w2_sums}.
\end{proof}

\begin{remark}\label{rem:lsv_uniform}
	The proof of Proposition \ref{prop:fcb_lsv} also gives
	\begin{align}\label{eq:fcb_slow_lsv}
		\biggl| \int_M H(x,x) \, d\mu_\omega(x)
		- \iint_{M^2} H(x,y) \, d\mu_\omega(x) \, d\mu_\omega(y) \biggr|
		\le C_* \Vert F \Vert_{\Lip}
		(n_{l+1}-n_l)^{1-1/\overline{\beta}},
	\end{align}
	where $H$ is defined as in \eqref{eq:def_H_lsv} and $C_*>0$
	depends only on the cone $\cC_*(a)$ and $\overline{\beta}$.
	This bound does not require the mixing assumption
	\eqref{eq:mixing_env}; ergodicity of
	$(\Omega,\cF,\bP,\sigma)$ suffices.
	Whereas the decay rate in \eqref{eq:fcb_lsv} is governed by
	the fastest mixing map in the family, the rate in
	\eqref{eq:fcb_slow_lsv} is governed by the slowest mixing map.
	The latter bound, however, has a deterministic constant $C_*$.
	In particular, if $\overline{\beta}<1/4$, by applying Theorem \ref{thm:w2_sums}
	we obtain
	\begin{align*}
		\cW_2\bigl(\cL(W_{\omega,N}),N(0,1)\bigr)
		= O\bigl(N V_{\omega,N}^{-3/2}   + N^{1/2} V_{\omega, N}^{-1}  \bigr).
	\end{align*}
	Consequently, if $V_{\omega,N}^{-1}=O(N^{-1})$, we obtain
	the optimal rate of convergence in the central limit theorem
	in the $\cW_2$ distance:
	\begin{align}\label{eq:w2_lsv_optimal}
		\cW_2\bigl(\cL(W_{\omega,N}),N(0,1)\bigr)
		= O(N^{-1/2}).
	\end{align}
    To the best of our knowledge, this is the first optimal
$\cW_2$ rate in the central limit theorem for nonautonomous
intermittent maps. In the autonomous case, \eqref{eq:w2_lsv_optimal}
was established in \cite[Section 3.2]{DMR23} under the assumption that 
the parameter $\eta$ of the LSV map $T_\eta$ satisfies $\eta < 1/4$.
\end{remark}

\subsection{Application to random  piecewise non-uniformly  expanding maps}\label{Sec3.4}

Let $(\Omega, \mathcal F, \mathbb P)$ be a probability space equipped with measurable invertible preservation $\sigma \colon \Omega \to \Omega.$ We suppose that $\mathbb P$ is ergodic.
For each $\omega \in \Omega$, let $T_\omega$ be a piecewise $C^2$ map from $[0, 1]$ to itself and denote by $\lambda_\omega$ its minimal expansion of $T_\omega$, i.e. 
$\lambda_\omega:=\essinf |T_\omega'|$. As usual,
\[
T_\omega^n:=T_{\sigma^{n-1}\omega}\circ \ldots \circ T_{\sigma \omega}\circ T_\omega.
\]
Henceforth we will work with the class of maps and environments $(\Omega, \mathcal F, \mathbb P)$  in \cite[Section 5]{DH CMP}. These maps satisfy the following conditions.
\begin{enumerate}
\item[(i)] there exists of an equivariant family $\{\mu_\omega\}_{\omega \in \Omega}$ of Borel
probability measures on $[0, 1]$ such that $T_\omega^*\mu_\omega=\mu_{\sigma \omega}$ for $\mathbb P$-a.e. $\omega \in \Omega$. Moreover, $d\mu_\omega=h_\omega \, dm$, where $m=\text{Lebesgue}$. Furthermore,  there is $\tilde E\in L^{q_3}(\Omega, \mathbb P)$ 
for some $q_3 \ge 1$
such that 
\[
\|h_\omega\|_{\Lip}\le \tilde E(\omega), 
\]
for $\mathbb P$-a.e. $\omega \in \Omega$.
\vskip0.1cm
    \item[(ii)] there is a  random variable $E\colon \Omega \to (0, \infty)$, $E\in L^{q_0}(\Omega, \mathbb P)$ with $q_0 \ge 1$ and $a_0>1$ such that 
\begin{equation}\label{C2}
\prod_{i=0}^{n-1}\lambda_{\sigma^i \omega}^{-1}\le E(\omega)n^{-a_0},    
\end{equation}
for $\mathbb P$-a.e. $\omega \in \Omega$ and $n\in \mathbb N$. 
\vskip0.1cm
\item [(iii)] we have 
\begin{equation}\label{C3}
\|\mathcal L_\omega^n \phi \|_{L^1(m)}\le E'(\omega)n^{-a_1}\|\phi\|_{\Lip}    
\end{equation}
for $\mathbb P$-a.e. $\omega \in \Omega$, $n\in \mathbb N$ and $\phi$ Lipschitz with $\int \phi \, dm=0$. Here, $a_1>0$ and $E'\in L^{q_1}(\Omega, \mathbb P)$.
\vskip0.1cm
\item[(iv)] $\text{Lip}(\ln|T_{\omega}'|),\|T_\omega''\|_\infty\in L^{q}$ for some $q\ge1$.
\vskip0.1cm
\item[(v)] let $\mathcal A_{\omega}$ be the partition of $[0, 1]$ into the monotonicity intervals for $T_\omega$ and $|\mathcal A_\omega|$ denotes its cardinality. Then  $|\mathcal A_{\omega}|\in L^{r}$ for some $r\geq1$.
\vskip0.1cm
\item[(vi)] for every $p \in \bN$
we have
$$
\|\mathcal L_{\omega}^p\textbf{1}\|_{\infty}\leq Q(\omega)R(\sigma^p\omega)
$$
where $Q\in L^{t}$ and $R\in L^{z}$ for some $t, z \ge 1$.
\end{enumerate}

Before we proceed we need the following two consequences of our assumptions.
We first prove the following result.
Denote by $D(T)$ the smallest constant $D$ such that for all $I\in\mathcal A_T$ and all $x,y\in I$ we have $|T'(x)-T'(y)|\leq D|T'(x)||Tx-Ty|$.
We need the following result.
\begin{lemma}
Suppose that $\Lip(\ln|T_{\omega}'|)\in L^{q}$ for some $q>1$.
For every $\delta>0$ we have
$$
D(T_\omega^n)\leq E_0(\omega)n^{1/q_0+1/q+\delta}
$$
where $E_0(\omega)\in L^{s}(\Omega)$ with $1/s=1/q+1/q_0$.
\end{lemma}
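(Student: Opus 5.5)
The plan is to prove a chain-rule estimate for the distortion constant of the composition $T_\omega^n = T_{\sigma^{n-1}\omega}\circ\cdots\circ T_\omega$, and then convert the resulting random sum into a polynomial bound with an $L^s$ prefactor via \cite[Lemma 2]{DH CMP}. Fix a monotonicity interval $I$ of $T_\omega^n$ and $x,y\in I$. Write $x_j=T_\omega^j x$ and $y_j=T_\omega^j y$. Then for every $j<n$, $x_j$ and $y_j$ lie in a common branch of $T_{\sigma^j\omega}$.

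By the chain rule,
\[
\bigl|\ln|(T_\omega^n)'(x)|-\ln|(T_\omega^n)'(y)|\bigr|\le \sum_{j=0}^{n-1}\Lip(\ln|T_{\sigma^j\omega}'|)\,|x_j-y_j|.
\]
To control $|x_j-y_j|$, I apply the mean value theorem to the branch of $T_{\sigma^j\omega}^{\,n-j}$ containing $x_j,y_j$, together with \eqref{C2} applied at $\sigma^j\omega$. This gives
\[
|x_j-y_j|\le \prod_{i=j}^{n-1}\lambda_{\sigma^i\omega}^{-1}\,|x_n-y_n|\le E(\sigma^j\omega)(n-j)^{-a_0}|T_\omega^nx-T_\omega^ny|.
\]
Since $a_0>1$, we have $\sum_{j}(n-j)^{-a_0}\le C$. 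Bounding $\Lip(\ln|T'_{\sigma^j\omega}|)\le \max_{k<n}\Lip(\ln|T'_{\sigma^k\omega}|)$ and $E(\sigma^j\omega)\le \max_{k<n}E(\sigma^k\omega)$, the log-difference is at most $C\,\max_{k<n}\Lip(\ln|T'_{\sigma^k\omega}|)\,\max_{k<n}E(\sigma^k\omega)\,|T_\omega^nx-T_\omega^ny|$.

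Next I apply \cite[Lemma 2]{DH CMP}, which was already used above for $\hat K_\delta$. For $g\in L^a$ it gives $\max_{k<n}|g(\sigma^k\omega)|\le G_\delta(\omega)n^{1/a+\delta}$ with $G_\delta\in L^a$. Applying it with $a=q$ to $\Lip(\ln|T'|)$ and with $a=q_0$ to $E$, and using Hölder to see that the product of an $L^q$ and an $L^{q_0}$ function lies in $L^s$ with $1/s=1/q+1/q_0$, I obtain
\[
\bigl|\ln|(T_\omega^n)'x|-\ln|(T_\omega^n)'y|\bigr|\le E_0(\omega)n^{1/q+1/q_0+\delta}|T_\omega^nx-T_\omega^ny|.
\]
Here $\delta$ has been renamed.

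The main obstacle is passing from this log-Lipschitz bound to the required form $|T'(x)-T'(y)|\le D|T'(x)||Tx-Ty|$. We have $|T'(x)-T'(y)|=|T'(x)|\,|1-e^{u}|$, where $u$ is the log-difference, and $|1-e^u|\le |u|e^{|u|}$. The factor $e^{|u|}$ is harmless only when $|u|$ is bounded, and it need not be. My first attempt is to avoid the exponential: bound $|T'(x)-T'(y)|\le |T'(x)|\,\bigl|1-T'(y)/T'(x)\bigr|$ and handle the ratio product $\prod_j |T'_{\sigma^j\omega}(y_j)|/|T'_{\sigma^j\omega}(x_j)|$ directly via the factorwise bound $|T'(y_j)-T'(x_j)|\le \|T''\|_\infty|x_j-y_j|$, using assumption (iv). If that fails, I would fall back on the convention, standard in this literature, that $D$ controls the log-Lipschitz constant $|\ln|T'x|-\ln|T'y||\le D|Tx-Ty|$. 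The estimate above is exactly of that form, and that is what distortion arguments actually use.
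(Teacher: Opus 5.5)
Your argument is the paper's argument. The paper also expands $\ln|(T_\omega^n)'|$ by the chain rule and bounds each term by $\Lip(\ln|T'_{\sigma^j\omega}|)\,|x_j-y_j|$. It then uses \eqref{C2} backwards to get $|x_j-y_j|\le E(\sigma^j\omega)(n-j)^{-a_0}|T_\omega^nx-T_\omega^ny|$, and controls $\Lip(\ln|T'_{\sigma^j\omega}|)$ and $E(\sigma^j\omega)$ along the orbit with the same maximal-growth lemma used for $\hat K_\delta$. Your indexing $x_j=T^j_\omega x$ is cleaner than the paper's $T^{j-1}_\omega x$. The paper stops at exactly the same log-ratio estimate and declares that the result follows. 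In other words, it tacitly adopts your fallback reading of $D$ as the Lipschitz constant of $\ln|(T_\omega^n)'|$ in the image variable.

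The step you flag is nevertheless a real gap for the literal definition of $D(T)$, and your first attempt does not close it. Writing each factor as $1+\epsilon_j$ with $|\epsilon_j|\le\|T''_{\sigma^j\omega}\|_\infty\lambda_{\sigma^j\omega}^{-1}|x_j-y_j|$ only yields $|\prod_j(1+\epsilon_j)-1|\le e^{\sum_j|\epsilon_j|}-1$. That is the same $|u|e^{|u|}$ obstruction, with a sum of the same size.

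Since $|1-e^u|\le|u|e^{\max(u,0)}$, what is actually missing is a polynomial bound on the multiplicative distortion $\Theta_n=\sup_I|(T^n_\omega)'|/\inf_I|(T^n_\omega)'|$. With such a bound you would get $D(T^n_\omega)\le K_n\Theta_n$, where $K_n=E_0(\omega)n^{1/q+1/q_0+\delta}$. The log-estimate alone gives only $\Theta_n\le e^{K_n}$.

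Neither your proposal nor the paper supplies this bound, and it cannot be borrowed from the next lemma. That lemma derives a bound on $\sup_I 1/|(T^n_\omega)'|$ in terms of $\inf_I 1/|(T^n_\omega)'|$ by using $D(T^n_\omega)$ linearly. Under your log-Lipschitz fallback, that step would only give a factor $e^{D}$, so the fallback is not harmless downstream. What you (and the paper) have actually proved is the log-distortion version of the lemma; the literal statement needs an additional ingredient.
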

\begin{proof}
For any $x,y \in I$, $I \in \cA_{ T_\omega^n }$, we have
\[
\left|\ln\frac{\left|(T_\omega^n)'(x)\right|}{\left|(T_\omega^n)'(y)\right|}\right|\leq \sum_{j=0}^{n-1}\left|\ln \left|T_{\sigma^j\omega}'(T_\omega^{j-1}x)\right|-\ln \left|T_{\sigma^j\omega}'(T_\omega^{j-1}y)\right|\right|
 \]
 \[
\leq \sum_{j=0}^{n-1}\text{Lip}(\ln|T_{\sigma^j\omega}'|)\left|T_\omega^{j-1}x-T_\omega^{j-1}y\right|.
 \]
 Next, we have 
 $$
\text{Lip}(\ln|T_{\sigma^j\omega}'|)\leq R_\delta(\omega)n^{1/q+\delta}
 $$
 with $R_\delta\in L^q$.
Since the map $T_{\sigma^k\omega}$ stretches distances by at least $\lambda_{\sigma^k\omega}$ and $x$ and $y$
belong to the same partition element, 
$$
\left|T_\omega^{j-1}x-T_\omega^{j-1}y\right|\leq \prod_{k=j}^{n-1}\lambda_{\sigma^k\omega}^{-1}|T_\omega^nx-T_\omega^n y|\leq E(\sigma^{j}\omega)(n-j)^{-a_0}|T_\omega^nx-T_\omega^n y|.
$$
Now, we have $E(\sigma^{j}\omega)\leq R'_\delta(\omega)n^{1/q_0+\delta}$ with $R'_\delta\in L^{q_0}$. The result follows now by the above estimates.
\end{proof}

The next result we need is.
\begin{lemma}
Suppose that $|\mathcal A_{\omega}|\in L^{r}$ and 
$$
\|\mathcal L_{\omega}^p\textbf{1}\|_{\infty}\leq Q(\omega)R(\sigma^p\omega)
$$
where $Q\in L^{t}$ and $R\in L^{z}$. Then for every $\delta>0$ we have 
\begin{equation}\label{der}
    \sum_{I\in\mathcal A_{\omega,n}}\sup_I\frac{1}{|(T_\omega^n)'|}\leq 2n^{1/q_2+\delta}E_{2,\delta}(\omega)
\end{equation}   
 where  $1/s=1/q+1/q_0$,
 $1/q_2=1/s+1/s'$,
 $1/s'=1/t+1/z$ and 
 $E_{2,\delta}\in L^{q_2}(\Omega,\mathbb P)$.
\end{lemma}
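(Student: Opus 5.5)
The plan is to compare $\sup_I |(T_\omega^n)'|^{-1}$ with the Lebesgue mass that $\cL_\omega^n\mathbf 1$ places on the image of $I$, using the distortion bound of the previous lemma. Then the total mass of $\cL_\omega^n\mathbf 1$ controls the sum, up to a small exceptional family of branches.

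\textbf{Step 1: one branch.} Fix $I\in\cA_{\omega,n}$, the partition into monotonicity intervals of $T_\omega^n$. By the previous lemma, for $x,y\in I$ we have $|\ln|(T^n_\omega)'(x)|-\ln|(T^n_\omega)'(y)||\le D(T^n_\omega)|T^n_\omega x-T^n_\omega y|\le D(T^n_\omega)$. This gives $\sup_I |(T_\omega^n)'|^{-1}\le e^{D(T_\omega^n)}\inf_I|(T_\omega^n)'|^{-1}$. The exponential is too lossy, so I would instead use the linear form of the distortion inequality, $|(T^n)'(x)-(T^n)'(y)|\le D|(T^n)'(x)||T^nx-T^ny|$, directly. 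It yields
\[
\sup_I\frac{1}{|(T_\omega^n)'|}\le \frac{1}{|T_\omega^n I|}\int_I dm + D(T_\omega^n)\,|I|,
\]
obtained by integrating $1/|(T^n)'|$ against the change of variables on $T^n_\omega I$. This splits the sum into a main term and a distortion term.

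\textbf{Step 2: summing.} The distortion term is summed trivially: $\sum_I |I|\le 1$, so it contributes at most $D(T^n_\omega)\le E_0(\omega)n^{1/s+\delta}$. For the main term, write $\int_I dm=\int_{T^n_\omega I}(\mathbf 1_I/|(T^n_\omega)'|)\circ (T^n_\omega|_I)^{-1}\,dm$. The branches covering a given point sum to $\cL_\omega^n\mathbf 1$. Hence, whenever $|T^n_\omega I|$ is bounded below, the main term is at most a constant times $\|\cL^n_\omega\mathbf 1\|_\infty\le Q(\omega)R(\sigma^n\omega)$. By \cite[Lemma 2]{DH CMP}, $R(\sigma^n\omega)\le R_\delta(\omega)n^{1/z+\delta}$ with $R_\delta\in L^z$, so this part is at most $Q(\omega)R_\delta(\omega)n^{1/z+\delta}$. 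The product $Q R_\delta$ lies in $L^{s'}$ by Hölder. Multiplying by the $L^s$ factor $E_0$ where the two pieces interact, and using $x+y\le 2\max$, gives the exponent $1/q_2$. I expect the stated form arises from bounding both terms by $E_0\cdot QR_\delta\cdot n^{1/q_2+\delta}$, which Hölder places in $L^{q_2}$.

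\textbf{Main obstacle.} The difficulty is branches with short images $|T^n_\omega I|$, where the ratio $\int_I dm/|T^n_\omega I|$ is not controlled by $\cL^n\mathbf 1$. I would handle these by noting that for any $x\in I$, $\sup_I 1/|(T^n)'|\le (1+D|T^nI|)/|(T^n)'(x)|$. Averaging over the preimages of a single point $y$ (one per branch whose image contains $y$) gives $\sum 1/|(T^n)'(x_I)|=\cL^n\mathbf 1(y)$, with the factor $(1+D)$. Branches whose image misses a given $y$ are then treated by choosing finitely many points $y$. That requires bounding how many distinct images occur, and this is where I would try to use $|\cA_\omega|\in L^r$, controlled again via \cite[Lemma 2]{DH CMP}. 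This step is the one I would expect to need the most care to get exactly the claimed integrability exponent.
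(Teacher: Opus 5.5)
\textbf{Gap: short branch images.} Your Step 1 is correct, and you have found the real difficulty. The images $T^n_\omega I$, $I\in\mathcal A_{\omega,n}$, need not share a common point.

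The paper's proof does not address this. After $\sup_I|(T^n_\omega)'|^{-1}\le(1+\cdots)\inf_I|(T^n_\omega)'|^{-1}$, it bounds $\sum_I\inf_I|(T^n_\omega)'|^{-1}$ by $\|\mathcal L^n_\omega\mathbf 1\|_\infty$ in one line. That step presupposes a point lying in every image, as with full branches, and the paper never uses $|\mathcal A_\omega|\in L^r$.

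Your remedy, however, does not reach the claimed order. Covering all images by points $y_1,\dots,y_K$ gives $\sum_I\sup_I|(T^n_\omega)'|^{-1}\le(1+D(T^n_\omega))\,K\,\|\mathcal L^n_\omega\mathbf 1\|_\infty$. The endpoints of the images are the one-sided values $T^{n-j}_{\sigma^j\omega}(c^\pm)$, $c\in\partial\mathcal A_{\sigma^j\omega}$, for \emph{every} $0\le j<n$. So $K$ is only controlled by $\sum_{j<n}(|\mathcal A_{\sigma^j\omega}|+1)$, which is of order $n$ by the ergodic theorem. A tempered bound $\max_{j<n}|\mathcal A_{\sigma^j\omega}|\le Cn^{1/r+\delta}$ does not help with a sum of $n$ terms. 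Your bound is therefore of order $n^{1+1/s+1/z+\delta}$, a full power of $n$ worse than $n^{1/q_2+\delta}$.

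\textbf{Missing idea: weight by contraction since the last cut.} Do not give every covering point the weight $\|\mathcal L^n_\omega\mathbf 1\|_\infty$. Instead, weight each group of branches by the contraction accumulated since the cut that created it.

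For $I\in\mathcal A_{\omega,n}$, let $j<n$ be the last time at which an endpoint of $T^j_\omega I$ is a point $c\in\partial\mathcal A_{\sigma^j\omega}\cup\{0,1\}$. Record also on which side of $c$ the interval $T^j_\omega I$ lies.

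Branches with the same label $(j,c,\pm)$ have images $T^j_\omega I$ sharing a one-sided neighbourhood of $c$. Since $T^j_\omega$ is injective on each of its branches, they lie in distinct branches of $T^j_\omega$. Take $z$ in that common neighbourhood and $x_I\in I$ with $T^j_\omega x_I=z$. Then
\[
\sum_{I\ \text{with label}\ (j,c,\pm)}\sup_I\frac{1}{|(T^n_\omega)'|}\le\bigl(1+D(T^n_\omega)\bigr)\,\mathcal L^j_\omega\mathbf 1(z)\prod_{i=j}^{n-1}\lambda_{\sigma^i\omega}^{-1}\le\bigl(1+D(T^n_\omega)\bigr)\,Q(\omega)R(\sigma^j\omega)E(\sigma^j\omega)(n-j)^{-a_0}.
\]
For each $j$ there are at most $2(|\mathcal A_{\sigma^j\omega}|+1)$ labels. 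Since $a_0>1$, the sum over $j$ converges. Only tempered maxima of $|\mathcal A_{\sigma^j\omega}|$, $R(\sigma^j\omega)$ and $E(\sigma^j\omega)$ then enter.

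This is where $|\mathcal A_\omega|\in L^r$ and condition (ii) are genuinely needed. The resulting power of $n$ also involves $1/r$ and $1/q_0$, so the exponent bookkeeping in the statement has to be adjusted accordingly.
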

\begin{proof}
Notice that for all $I\in\mathcal A_{\omega,n}$ we have 
$$
\sup_I\frac1{|(T_\omega^n)'|}\leq \inf_{I}\frac1{|(T_\omega^n)'|}\left(1+D(T_\omega^n)\|(T_{\sigma^{n-1}\omega})''\|_{\infty}\right).
$$
Now, let $\delta>0$. Since $\|(T_\omega)''\|_\infty\in L^q$ we have $\|(T_{\sigma^{n-1}\omega})''\|_{\infty}\leq n^{1/q+\delta}U(\omega)$ with $U\in L^q$. Similarly,  
$R(\sigma^n\omega)\leq U_1(\omega)n^{1/z+\delta}$ with $U_1\in L^z$.
Using these estimates we conclude that 
$$
    \sum_{I\in\mathcal A_{\omega,n}}\sup_I\frac{1}{|(T_\omega^n)'|}\leq \mathcal \|\mathcal L_{\omega}^n\textbf{1}\|_\infty\left(1+ E_0(\omega)n^{1/q_0+1/q+\delta}U(\omega)n^{1/q+\delta}\right)
$$
$$
\leq Q(\omega)U_1(\omega)n^{1/z+\delta}\left(1+ E_0(\omega)n^{1/q_0+1/q+\delta}U(\omega)n^{1/q+\delta}\right).
$$
\end{proof}

Next, let $F\colon [0, 1]^k \to \mathbb R$ be a separately Lipschitz 
continuous function. In what follows we will estimate
\[
J=J(\omega):=\int_{0}^1H_\omega(x, x)\, d\mu_\omega(x)-\int_0^1 \int_0^1H_\omega(x, y)\, d\mu_\omega(x)\, d\mu_{\omega}(y),
\]
where $H\colon [0, 1]^2\to \mathbb R$ is given by 
\[
H_\omega(x, y):=F(T_{\omega}^{n_1}(x), \ldots, T_{\omega}^{n_l}(x), T_{\omega}^{n_{l+1}}(y), \ldots, T_\omega^{n_k}(y)).
\]
Here, $0\le n_1<n_2<\ldots <n_k$ is a fixed strictly increasing finite sequence of positive integers independent of $\omega$, $x$ and $y$.
\begin{proposition}\label{J prop}
For every $\delta>0$ there are $E_\delta \in L^{q_0}(\Omega, \mathbb P)$, $E_\delta'\in L^{q_1}(\Omega, \mathbb P)$, $\tilde E_\delta\in L^{q_3}(\Omega,\mathbb P)$, $E_\delta'''\in L^{b}(\Omega,\mathbb P)$ with $1/b=2/q_0+1/q$ and $\tilde c\in L^{b}(\Omega,\mathbb P)$
such that 
\[
\begin{split}
J &=O(E_\delta(\omega)n_l^{\frac 1 q+\delta}(n_{l+1}-n_l)^{1-a_0})+O(E_\delta'(\omega)\tilde E_\delta(\omega)n_{l+1}^{\frac{1}{q_1}+\frac{1}{q_3}+2\delta}(n_{l+1}-n_l)^{-a_1})\\
&\phantom{=}+O(E_\delta'(\omega)\tilde E(\omega)E''(\omega)E_\delta'''(\omega)(1+\tilde c_\delta(\omega)) n_{l+1}^{\frac{1}{q_1}+\frac{1}{q_2}+\frac{1}b+3\delta}(n_{l+1}-n_l)^{-a_1}).
\end{split}
\]
Here the $O(\cdot)$ terms are bounded by some absolute constant $C$ times $\|F\|_{\Lip}$ and the parameters in the brackets.
\end{proposition}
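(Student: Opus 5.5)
The plan is to follow the scheme of Proposition \ref{prop:scheme} and verify (A1)--(A3) for a suitable partition. The three error terms in the statement then correspond to the Lipschitz approximation on good pieces (A2), the decay-of-correlations term (A3), and a normalization/distortion correction inside (A3). Here, unlike the LSV case, all pieces will be good, so $\cQ^{(b)}=\emptyset$ and (A1) is trivial. Set $n=n_l$ and take $\cQ_\omega=\mathcal A_{\omega,n}$, the partition of $[0,1]$ into monotonicity intervals of $T_\omega^{n}$, with $c_B\in B$ arbitrary.

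\textbf{Step (A2).} For $B\in\mathcal A_{\omega,n}$ and $j\le l$, the map $T^{n-n_j}_{\sigma^{n_j}\omega}$ is injective on $T_\omega^{n_j}B$ and expands by at least $\prod_{i=n_j}^{n-1}\lambda_{\sigma^i\omega}$. Hence by \eqref{C2}
$$
\text{diam}(T_\omega^{n_j}B)\le E(\sigma^{n_j}\omega)(n_l-n_j)^{-a_0}.
$$
This gives nothing for $j=l$, so instead I would refine the partition to $\mathcal A_{\omega,n_*}$ with $n_*=n_l+\round{(n_{l+1}-n_l)/2}$. Then
$$
\text{diam}(T_\omega^{n_j}B)\le E(\sigma^{n_j}\omega)(n_*-n_j)^{-a_0}.
$$
Summing over $j$ yields $\max_{k\le n_l}E(\sigma^k\omega)\,(n_{l+1}-n_l)^{1-a_0}$. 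By \cite[Lemma 2]{DH CMP}, $\max_{k\le n_l}E(\sigma^k\omega)\le E_\delta(\omega)n_l^{1/q_0+\delta}$. This produces the first term; the exponent bookkeeping should be matched to the statement, whose exponent appears to be written as $1/q$ there.

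\textbf{Step (A3).} As in Proposition \ref{prop:fcb_lsv}, the sum in (A3) is bounded by
$$
2\|F\|_\infty\sum_B\mu_\omega(B)\,\|\cL_\omega^{n_{l+1}}(h_\omega-\tilde h_{\omega,B})\|_{L^1(m)},\qquad \tilde h_{\omega,B}=\mathbf 1_Bh_\omega/\mu_\omega(B).
$$
This is rough, since $\sum_B\mu_\omega(B)$ combined with the Lipschitz norm of $\tilde h$ is problematic. Better, I would write $\cL^{n_*}_\omega(\mathbf 1_Bh_\omega)$, a function on the image interval $T_\omega^{n_*}B$ whose value there is $h_\omega/|(T^{n_*}_\omega)'|$ at the preimage. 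I would then apply \eqref{C3} for the remaining time $n_{l+1}-n_*\asymp n_{l+1}-n_l$, along the fiber $\sigma^{n_*}\omega$, to the function
$$
\psi_B=\cL^{n_*}_\omega(\mathbf 1_B h_\omega)-\mu_\omega(B)h_{\sigma^{n_*}\omega}.
$$
This function has zero Lebesgue mean but is not Lipschitz, because of the jump at the endpoints of $T_\omega^{n_*}B$. So the indicator must be smoothed, or the Lipschitz norm estimated via BV. The $\mu_\omega(B)h_{\sigma^{n_*}\omega}$ part contributes $\tilde E(\sigma^{n_*}\omega)$; after tempering with \cite[Lemma 2]{DH CMP} and multiplying by $E'(\sigma^{n_*}\omega)$, this gives the second term with $n_{l+1}^{1/q_1+1/q_3+2\delta}$. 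The transported part has Lipschitz norm bounded by
$$
\sup_B\frac{1}{|(T^{n_*}_\omega)'|}\Bigl(\|h_\omega\|_{\Lip}+D(T^{n_*}_\omega)\|h_\omega\|_\infty\Bigr).
$$
Summing over $B$ via \eqref{der} and the distortion lemma, this gives the third term, with the factors $E''$, $E'''_\delta$ (from $D$, with $1/b=2/q_0+1/q$) and $\tilde c_\delta$, each tempered along the orbit by the same lemma.

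\textbf{Main obstacle.} The delicate step is (A3): making $\psi_B$ legitimately Lipschitz so that \eqref{C3} applies. The endpoint discontinuities of the image interval must be handled, for instance by extending $\cL^{n_*}(\mathbf 1_Bh_\omega)$ linearly or by using a BV version of \eqref{C3}. One must also control the boundary contribution summed over the $|\mathcal A_{\omega,n_*}|$ pieces, again via \eqref{der}. After that, Proposition \ref{prop:scheme} assembles the three bounds.
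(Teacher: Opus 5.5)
Your plan is essentially the paper's proof. It uses the same partition $\cQ=\cQ^{(g)}=\mathcal A_{\omega,n_*}$ with $n_*=n_l+\round{(n_{l+1}-n_l)/2}$ and $\cQ^{(b)}=\emptyset$, and the same (A2) bound via \eqref{C2}. In (A3) it makes the same application of \eqref{C3} on the fiber $\sigma^{n_*}\omega$ to $\cL_\omega^{n_*}(h_\omega\mathbf 1_I)-\mu_\omega(I)h_{\sigma^{n_*}\omega}$. It splits the Lipschitz norm into the $h_{\sigma^{n_*}\omega}$ part, giving the second term, and the transported part, giving the third term via \eqref{der}. You are also right that the exponent of $n_l$ in the first term should be $1/q_0$ rather than $1/q$.

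The endpoint discontinuity you flag as the main obstacle does not come up in the paper's argument. The paper writes $\cL_\omega^{n_*}(h_\omega\mathbf 1_I)(x)=h_\omega(y_I(x))y_I'(x)$ with the inverse branch $y_I$ defined on all of $[0,1]$. This implicitly uses that each $I$ is mapped onto $[0,1]$, i.e.\ full branches, as in the setting of \cite[Section 5]{DH CMP}. The paper then bounds the Lipschitz constant directly by \cite[Lemma 17]{DH CMP}, so no smoothing or BV version of \eqref{C3} is needed.
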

Let $f:\Omega\times [0,1]\to\mathbb R$ be measurable function such that a   $\|f(\omega,\cdot)\|_{\Lip}\in L^p$ for some $2<p<\infty$. Let $S_n^\omega f(s)=\sum_{j=0}^{n-1}f(\sigma^j\omega, T_\omega ^j(x))$ considered as a random variable on $([0,1],\mu_\omega)$. Suppose that $\tilde f(\omega,x)=f(\omega,x)-\mu_\omega(f(\omega,\cdot))$ is not a coboundary with respect to the skew product $\tau(\omega,x)=(\sigma\omega, T_\omega x)$. Then (see \cite{DH CMP}) there exists $\Sigma>0$ such that for $\mathbb P$-a.a. $\omega$ we have
$$
\lim_{n\to\infty}\frac1n\text{Var}(S_n^\omega f)\to\infty=\Sigma^2.
$$
Let 
$$
W_n^\omega=\frac{S_{n}^\omega f-\mu_\omega(S_n^\omega f)}{\sigma_{\omega,n}}
$$
where $\sigma_{\omega,n}=\sqrt{\text{Var}_{\mu_\omega}(S_n^\omega f)}$.

The following result is a consequence of Proposition \ref{J prop} together with Theorem \ref{thm:w2_sums} and the mean ergodic theorem.
\begin{corollary}
Suppose that $a_0>4$ and $a_1>3$. 
For $\mathbb P$-a.a. $\omega$ for every $\delta>0$ we have
$$
\cW_2(W_N^\omega,N(0,1))=O(N^{-1/2+c+\delta})
$$
where
$$
c=4/p+1/q_0+2/q_1+2/q_3+3/b.
$$
\end{corollary}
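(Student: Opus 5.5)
The plan is to feed the nonuniform functional correlation bound of Proposition \ref{J prop} into Remark \ref{rem:fcb_ds}, and then apply Theorem \ref{thm:w2_sums} together with linear growth of the variance.

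\textbf{Step 1: functional correlation bound.} Fix $\delta>0$. The observables are $X_{n,N}=\tilde f(\sigma^n\omega,\cdot)$ composed with $T_\omega^n$. By \cite[Lemma 2]{DH CMP} and $\|f(\omega,\cdot)\|_{\Lip}\in L^p$,
$$
L_N:=\max_{n<N}\|f(\sigma^n\omega,\cdot)\|_{\Lip}\le C_\delta(\omega)N^{1/p+\delta}.
$$
Remark \ref{rem:fcb_ds} then says that composing $F$ with these observables multiplies $\|F\|_{\Lip}$ by at most $L_N$.

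Next, reduce every term of Proposition \ref{J prop} to the single rate $(n_{l+1}-n_l)^{-\theta}$ with $\theta=\min\{a_0-1,a_1\}>3$. This is the reason for assuming $a_0>4$ and $a_1>3$. Use $n_l,n_{l+1}<N$ to bound the polynomial prefactors $n_{l+1}^{(\cdot)}$ by $N^{(\cdot)}$. Again by \cite[Lemma 2]{DH CMP}, all random constants ($E_\delta$, $E'_\delta$, $\tilde E_\delta$, $E'''_\delta$, $\tilde c_\delta$, \dots) are a.s. finite for fixed $\omega$. This yields \eqref{eq:fcb} for $\bP$-a.e. $\omega$ with
$$
C_*(N)\le K_\delta(\omega)\,L_N\,N^{1/q_0+1/q_1+1/q_3+1/q_2+1/b+3\delta},
$$
where I will track the largest exponent among the three terms. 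The centering subtracts only constants, so the Lipschitz bounds are unaffected.

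\textbf{Step 2: variance.} By the non-coboundary assumption and the limit quoted from \cite{DH CMP}, $\sigma_{\omega,N}^2=V_N\ge \tfrac12\Sigma^2 N$ for all $N\ge N_0(\omega)$.

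\textbf{Step 3: apply Theorem \ref{thm:w2_sums}.} Write $\alpha=$ (exponent of $C_*(N)$), which includes the $1/p$ contributed by $L_N$ inside $C_*$. The first term of Theorem \ref{thm:w2_sums} gives
$$
L_N^3C_*NV_N^{-3/2}=O\bigl(N^{-1/2+3/p+\alpha+\delta}\bigr),
$$
and the second gives
$$
L_N^2\sqrt{C_*N}\,V_N^{-1}=O\bigl(N^{-1/2+2/p+\alpha/2}\bigr).
$$
The dominant term is the first. Collecting exponents ($4/p$ from $L_N^4$ overall, and the exponents of $q_0,q_1,q_3,b$ accumulated from Proposition \ref{J prop}, with the larger third term dominating) should produce the stated
$$
c=4/p+1/q_0+2/q_1+2/q_3+3/b.
$$
Any discrepancy in how these constants are counted would be absorbed by renaming $\delta$.

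\textbf{Main obstacle.} The main difficulty is the bookkeeping of exponents so that they match $c$ exactly, rather than some slightly different combination. In particular:
\begin{itemize}
\item the $q_2$ term must be shown to be absorbed (I expect $1/q_2\le 1/b+1/q_1$-type relations from its definition, or the statement implicitly treats it through $b$);
\item one must check that the $(n_{l+1}-n_l)^{1-a_0}$ decay indeed gives $\theta>3$.
\end{itemize}
I would first attempt the direct bound above. If the exponent count does not close, I would apply H\"older's inequality to the products of random constants only for integrability, since a pointwise a.s. statement needs only finiteness.
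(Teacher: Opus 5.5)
Your route is the paper's route. The paper's proof is only the remark that the corollary follows from Proposition~\ref{J prop}, Theorem~\ref{thm:w2_sums} and the ergodic theorem. Steps~1--3 are fine: the choice $\theta=\min\{a_0-1,a_1\}>3$, the $N^{1/p+\delta}$ growth of $L_N$, the linear variance growth, and the fact that the first term of Theorem~\ref{thm:w2_sums} dominates and carries $L_N^4$. The gap is the final exponent count, which is the only quantitative content of the statement. Your proposed way of closing it does not work. Since $\delta$ is arbitrarily small, renaming it absorbs only arbitrarily small exponents. It never absorbs a fixed positive difference between two combinations of the $1/q_i$.

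Here is what your argument actually gives. For fixed $\omega$, the random constants evaluated at the base point ($E_\delta(\omega)$, $E'_\delta(\omega)$, $\tilde c_\delta(\omega)$, \dots) are finite numbers and contribute nothing to the growth in $N$. Only the prefactors $n_{l+1}^{(\cdot)}$ do. Hence $C_*(N)\le K(\omega)L_NN^{r+3\delta}$ with
\[
r=\max\Bigl\{\tfrac1{q_0},\ \tfrac1{q_1}+\tfrac1{q_3},\ \tfrac1{q_1}+\tfrac1{q_2}+\tfrac1b\Bigr\},
\]
and the rate is $O(N^{-1/2+4/p+r+\delta})$. To reach the stated $c$ you need $r\le c-4/p$. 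The first two entries are harmless; the first is harmless even if read as $1/q$, as printed, since $1/q\le 1/b$. The third requires
\[
\tfrac1{q_2}\le \tfrac1{q_0}+\tfrac1{q_1}+\tfrac2{q_3}+\tfrac2b .
\]
Substituting $1/q_2=1/q+1/q_0+1/t+1/z$ and $1/b=2/q_0+1/q$, this is equivalent to $\tfrac1t+\tfrac1z\le \tfrac4{q_0}+\tfrac1q+\tfrac1{q_1}+\tfrac2{q_3}$. The hypotheses do not imply this, because the exponents $t,z$ from assumption (vi) do not enter $c$ at all. So the relation ``$1/q_2\lesssim 1/b+1/q_1$'' you hope for does not follow from the definitions. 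Bounding by the sum of all the exponents, as in your Step~1, runs into the same obstruction; it would need $\tfrac1t+\tfrac1z\le\tfrac3{q_0}+\tfrac1q+\tfrac1{q_1}+\tfrac1{q_3}$.

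As it stands, your proof establishes the corollary with $c$ replaced by $c'=4/p+r$. It gives the stated $c$ only under the extra condition above. To claim the stated $c$, you must either add such a condition or exhibit a different accounting of the exponents, and you should say explicitly which one you are doing.
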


\subsubsection{Estimate of $J$: proof of Proposition \ref{J prop}}

Set
\[
n_*:=n_l+\lfloor \frac{n_{l+1}-n_l}{2}\rfloor.
\]
By $\mathcal A_{\omega, n_*}$ we denote the partition of $[0, 1]$ into the monotonicity intervals for $T_\omega^{n_*}$.
We apply Proposition \ref{prop:scheme} with
$$
\cQ =   \cQ^{ (g) } = \mathcal A_{\omega, n_*} \quad \text{and} \quad \cQ^{ (b) } = \emptyset.
$$
It remains to verify (A2) and (A3), (A1) being automatically true.
Throughout the proof, we restrict to $\omega$ in a $\sigma$-invariant
subset of $\Omega$ with full $\mathbb P$-measure on which
assumptions (i)-(vi) hold.

\medskip

\noindent\textbf{(A2):} For $I \in  \mathcal A_{\omega, n_*}$,
\begin{align*}
	\sum_{j=1}^l \text{diam}( T_\omega^{n_j}( I  ) ) \le \sum_{i=1}^l \prod_{j=n_i+1}^{n_*}\lambda_{\sigma^j \omega}^{-1}.
\end{align*}
Next, by \eqref{C2}, 
\[
\prod_{j=n_i+1}^{n_*}\lambda_{\sigma^j \omega}^{-1}\le E(\sigma^{n_i+1}\omega)(n_*-n_i)^{-a_0}\le E_\delta(\omega)n_i^{\frac 1 q+\delta}(n_*-n_i)^{-a_0},
\]
where $\delta>0$ is arbitrary and $E_\delta \in L^{q_0}(\Omega, \mathbb P)$. Thus, for $1\le i \le n$ we have
\[
\prod_{j=n_i+1}^{n_*}\lambda_{\sigma^j \omega}^{-1} \le E_\delta(\omega)n_l^{\frac 1 q+\delta}(n_*-n_i)^{-a_0},
\]
implying that, for some constants $D, D_1 > 0$,
\[
	\sum_{j=1}^l \text{diam}( T_\omega^{n_j}( I  ) ) \le DE_\delta(\omega)n_l^{\frac 1 q+\delta}(n_*-n_l)^{1-a_0} \le 
	D_1 E_\delta(\omega)n_l^{\frac 1 q+\delta}(n_{l+1}-n_l)^{1-a_0}.
\]

\noindent\textbf{(A3):} Fix $c_I \in I$ for each $I \in \cA_{\omega, n_*}$. Then,
\[
\begin{split}
&\int_0^1 \left \{ \left (\tilde H(c_I, \cdot)-\int_0^1 H(c_I, y)\, d\mu_\omega(y)\right )\circ T_{\omega}^{n_{l+1}}
\right \}\cdot (\mathbf 1_I(\cdot)-\mu_\omega(I))\, d\mu_\omega \\
&=\int_0^1 \left (\tilde H(c_I, \cdot)-\int_0^1 H(c_I, y)\, d\mu_\omega(y)\right ) \cdot L_\omega^{n_{l+1}}(\mathbf 1_I-\mu_\omega(I))\, d\mu_{\sigma^{n_{l+1}}\omega}.
\end{split}
\]
Here, $L_\omega$ are normalized transfer operators, and $\tilde H(x, T_{\omega}^{n_{l+1}}(y))=H(x, y)$.  Denoting by $\mathcal L_\omega$ classical transfer operators, we have 
\[
\begin{split}
&\int_I \left [H(c_I, x)-\int_0^1H(c_I, y)\, d\mu_\omega(y)\right ]\, d\mu_\omega(x) \\
&=\int_0^1 \left (\tilde H(c_I, \cdot)-\int_0^1 H(c_I, y)\, d\mu_\omega(y)\right ) \cdot \mathcal L_\omega^{n_{l+1}}(h_\omega \mathbf 1_I-\mu_\omega(I) h_\omega )\, dm.
\end{split}
\]
Thus
\begin{align}
\begin{split}
\label{eq:aux1}
&\left|
\int_0^1 \left \{ \left (\tilde H(c_I, \cdot)-\int_0^1 H(c_I, y)\, d\mu_\omega(y)\right )\circ T_{\omega}^{n_{l+1}}
\right \}\cdot (\mathbf 1_I(\cdot)-\mu_\omega(I))\, d\mu_\omega \right| \\
&\le 2\|F\|_\infty \cdot \|\mathcal L_{\sigma^{n_*}\omega}^{n_{l+1}-n_*}\mathcal L_\omega^{n_*}(h_\omega \mathbf 1_I-\mu_\omega(I) h_\omega)\|_{L^1(m)}.
\end{split}
\end{align}
Next, by \eqref{C3}
\begin{align}\label{eq:aux2}
\begin{split}
&\|\mathcal L_{\sigma^{n_*}\omega}^{n_{l+1}-n_*}\mathcal L_\omega^{n_*}(h_\omega \mathbf 1_I-\mu_\omega(I) h_\omega)\|_{L^1(m)} \\
&\le E'(\sigma^{n_*}\omega)(n_{l+1}-n_*)^{-a_1}\|\mathcal L_\omega^{n_*}(h_\omega \mathbf 1_I-\mu_\omega(I) h_\omega)\|_{\Lip}\\
&\le E'_\delta(\omega)n_*^{\frac{1}{q_1}+\delta}(n_{l+1}-n_*)^{-a_1}\|\mathcal L_\omega^{n_*}(h_\omega \mathbf 1_I-\mu_\omega(I) h_\omega)\|_{\Lip},
\end{split}
\end{align}
where $\delta>0$ is arbitrary and $E_\delta'\in L^{q_1}(\Omega, \mathbb P)$.

It remains to estimate
\[
\|\mathcal L_\omega^{n_*}(h_\omega \mathbf 1_I-\mu_\omega(I) h_\omega)\|_{\Lip}.
\]
We have
\begin{align}\label{eq:aux3}
\|\mathcal L_\omega^{n_*}(h_\omega \mathbf 1_I-\mu_\omega(I) h_\omega)\|_{\Lip}\le \|\mathcal L_\omega^{n_*}(h_\omega \mathbf 1_I)\|_{\Lip}+\mu_\omega(I)\|h_{\sigma^{n_*}\omega}\|_{\Lip}.
\end{align}
Moreover, 
\[
\|\mathcal L_\omega^{n_*}(h_\omega \mathbf 1_I)\|_{\Lip}=\|\mathcal L_\omega^{n_*}(h_\omega \mathbf 1_I)\|_\infty+\Lip(\mathcal L_\omega^{n_*}(h_\omega \mathbf 1_I)),
\]
where $\Lip(\mathcal L_\omega^{n_*}(h_\omega \mathbf 1_I))$ denotes the Lipschitz constant of $\mathcal L_\omega^{n_*}(h_\omega \mathbf 1_I)$. We have 
\[
\mathcal L_\omega^{n_*}(h_\omega \mathbf 1_I)(x)=\frac{h_\omega((T_{\omega}^{n_*}\rvert_I)^{-1}(x))}{(T_\omega^{n_*})'((T_{\omega}^{n_*}\rvert_I)^{-1}(x))}.
\]
Therefore,
\begin{align}\label{eq:aux4}
\|\mathcal L_\omega^{n_*}(h_\omega \mathbf 1_I)\|_\infty \le \|h_\omega\|_\infty \cdot \sup_I \left |\frac{1}{(T_\omega^{n_*})'}\right |.
\end{align}

Moreover, by \cite[Lemma 17]{DH CMP} for  $x, x'\in [0, 1]$ we have
\[
\begin{split}
&|\mathcal L_\omega^{n_*}(h_\omega \mathbf 1_I)(x)-\mathcal L_\omega^{n_*}(h_\omega \mathbf 1_I)(x')|\\
&\le 2\|h_\omega\|_{Lip}(1+c_1(\sigma^{n_*}\omega))\|1/(T_{\omega}^{n_*})'\|_{\infty,I}|x-x'|
\end{split}
\]
where $\|g\|_{\infty,I}=\sup_{I}|g|$ and $c_1\in L^{b_0}$. Note that $c_1(\sigma^n\omega)\leq \tilde c_\delta(\omega)n^{1/b+\delta}, \delta>0$ with $\tilde c_\delta\in L^b$. 
Here we used that 
$$
\mathcal L_\omega^{n_*}(h_\omega \mathbf 1_I)(x)=h_\omega(y_I(x))y_I'(x)
$$
where $y_I(x)=(T_{\omega}^{n_*}\rvert_I)^{-1}(x)$ is the inverse branch of $T_{\omega}^{n^*}$ corresponding to $I$. Therefore,
\begin{align}\label{eq:aux5}
	\Lip (\mathcal L_\omega^{n_*}(h_\omega \mathbf 1_I))
	\le 2\|h_\omega\|_{Lip}(1+c_1(\sigma^{n_*}\omega))\|1/(T_{\omega}^{n_*})'\|_{\infty,I}.
\end{align}

Let
$$
\cI = \sum_{I\in \mathcal A_{\omega, n_*}}  \int_0^1 \left \{ \left (\tilde H(c_I, \cdot)-\int_0^1 H(c_I, y)\, d\mu_\omega(y)\right )\circ T_{\omega}^{n_{l+1}}
\right \}\cdot (\mathbf 1_I(\cdot)-\mu_\omega(I))\, d\mu_\omega.
$$
By \eqref{eq:aux1}, \eqref{eq:aux2}, and \eqref{eq:aux3},
\begin{align*}
	|\cI| 
	&\le 
	 C\sum_{I\in \mathcal A_{\omega, n_*}}\|\mathcal L_{\sigma^{n_*}\omega}^{n_{l+1}-n_*}\mathcal L_\omega^{n_*}(h_\omega \mathbf 1_I-\mu_\omega(I) h_\omega)\|_{L^1(m)}\\
	&\le CE'_\delta(\omega)n_*^{\frac{1}{q_1}+\delta}(n_{l+1}-n_*)^{-a_1}\sum_{I\in \mathcal A_{\omega, n_*}}\|\mathcal L_\omega^{n_*}(h_\omega \mathbf 1_I-\mu_\omega(I) h_\omega)\|_{Lip}\\
	&\le CE'_\delta(\omega)n_*^{\frac{1}{q_1}+\delta}(n_{l+1}-n_*)^{-a_1}\sum_{I\in \mathcal A_{\omega, n_*}}\left(\|\mathcal L_\omega^{n_*}(h_\omega \mathbf 1_I)\|_{Lip}+\mu_\omega(I)\|h_{\sigma^{n_*}\omega}\|_{\Lip}\right ).
\end{align*}
Recall from our assumption (i)  that there is $\tilde E\in L^{q_3}(\Omega, \mathbb P)$ such that 
\[
\|h_\omega\|_{\Lip}\le \tilde E(\omega).
\]
Using this property together with \eqref{eq:aux4} and \eqref{eq:aux5}, 
it follows that
\begin{align*}
	|\cI| &\le 
	CE_\delta'(\omega)\tilde E_\delta(\omega)n_*^{\frac{1}{q_1}+\frac{1}{q_3}+2\delta}(n_{l+1}-n_*)^{-a_1}\\
	&\phantom{\le}+CE'_\delta(\omega)n_*^{\frac{1}{q_1}+\delta}(n_{l+1}-n_*)^{-a_1}\sum_{I\in \mathcal A_{\omega, n_*}}\|\mathcal L_\omega^{n_*}(h_\omega \mathbf 1_I)\|_{\Lip} \\
	&\le CE_\delta'(\omega)\tilde E_\delta(\omega)n_*^{\frac{1}{q_1}+\frac{1}{q_3}+2\delta}(n_{l+1}-n_*)^{-a_1}\\
	&\phantom{\le}+CE'_\delta(\omega)\tilde E(\omega) n_*^{\frac{1}{q_1}+\delta}(n_{l+1}-n_*)^{-a_1}\sum_{I\in \mathcal A_{\omega, n_*}}\sup_I \left |\frac{1}{(T_\omega^{n_*})'}\right| \\
	&\phantom{\le}+CE'_\delta(\omega)n_*^{\frac{1}{q_1}+\delta}(n_{l+1}-n_*)^{-a_1}\sum_{I\in \mathcal A_{\omega, n_*}} \Lip (\mathcal L_\omega^{n_*}(h_\omega \mathbf 1_I)) \\
	&\le 
	CE_\delta'(\omega)\tilde E_\delta(\omega)n_*^{\frac{1}{q_1}+\frac{1}{q_3}+2\delta}(n_{l+1}-n_*)^{-a_1}\\
	&\phantom{\le}+CE'_\delta(\omega)\tilde E(\omega) n_*^{\frac{1}{q_1}+\delta}(n_{l+1}-n_*)^{-a_1}\sum_{I\in \mathcal A_{\omega, n_*}} \|1/(T_{\omega}^{n_*})'\|_{\infty,I} \\
	&\phantom{\le}+CE'_\delta(\omega)\tilde E(\omega) n_*^{\frac{1}{q_1}+1/b+2\delta}(n_{l+1}-n_*)^{-a_1}\sum_{I\in \mathcal A_{\omega, n_*}}(1+\tilde c_\delta(\omega))\|1/(T_{\omega}^{n_*})'\|_{\infty,I}.
\end{align*}
Here $\|g\|_{\infty,I}=\sup_{x\in I}|g(x)|$.
Finally, using \eqref{der} we obtain
\begin{align*}
	|\cI| &\le CE_\delta'(\omega)\tilde E_\delta(\omega)n_*^{\frac{1}{q_1}+\frac{1}{q_3}+2\delta}(n_{l+1}-n_*)^{-a_1}\\
	&\phantom{\le}+CE'_\delta(\omega)\tilde E(\omega)(1+\tilde c_\delta(\omega)) n_*^{\frac{1}{q_1}+\frac{1}{b}+2\delta}(n_{l+1}-n_*)^{-a_1}\sum_{I\in \mathcal A_{\omega, n_*}}\|1/(T_{\omega}^{n_*})'\|_{\infty,I}\\
	&\le CE_\delta'(\omega)\tilde E_\delta(\omega)n_{l+1}^{\frac{1}{q_1}+\frac{1}{q_3}+2\delta}(n_{l+1}-n_l)^{-a_1}\\
	&\phantom{=}+CE_\delta'(\omega)\tilde E(\omega)E''(\omega)E_\delta'''(\omega)(1+\tilde c_\delta(\omega)) n_{l+1}^{\frac{1}{q_1}+\frac{1}{q_2}+\frac{1}{b}+3\delta}(n_{l+1}-n_l)^{-a_1}.
\end{align*}

It follows by Proposition \ref{prop:scheme} that
\[
\begin{split}
J &=O(E_\delta(\omega)n_l^{\frac 1 q+\delta}(n_{l+1}-n_l)^{1-a_0})+O(E_\delta'(\omega)\tilde E_\delta(\omega)n_{l+1}^{\frac{1}{q_1}+\frac{1}{q_3}+2\delta}(n_{l+1}-n_l)^{-a_1})\\
&\phantom{=}+O(E_\delta'(\omega)\tilde E(\omega)E''(\omega)E_\delta'''(\omega)(1+\tilde c_\delta(\omega)) n_{l+1}^{\frac{1}{q_1}+\frac{1}{q_2}+\frac{1}b+3\delta}(n_{l+1}-n_l)^{-a_1}).
\end{split}
\]
\qed

\section{Optimal CLT $\cW_2$-rates for uniformly bounded $\phi$-mixing sequences}\label{Sec4}
Let us consider  real-value zero mean uniformly bounded random variables $X_0,X_1,...$ on some probability space $(\Omega,\mathcal F,\mathbb P)$. Recall that the $\phi$-mixing (weak dependence) coefficients of the sequence are given by 
$$
\phi(n)=\sup_k\sup\left\{|\mathbb P(B|A)-\mathbb P(B)|: \mathbb P(A)>0, A\in\sigma\{X_i: i\leq k\}, B\in\sigma\{X_i: i\geq k+n\}\right\}.
$$
Let $S_N=\sum_{n=0}^{N-1}X_n$ and let $V_N=\text{Var}(S_N)$.

In this section we prove the following result.
\begin{theorem}\label{PhiMix}
Suppose that 
$$
\sum_{j=1}^\infty j^{2}\phi(j)<\infty.
$$
Then with $W_N=(V_N)^{-1/2}S_N$ there
  is an absolute constant $C>0$ such that with $\gamma=\sup_j\|X_j\|_{L^\infty}$,
$$
\cW_2(W_N,N(0,1))\leq C\gamma^3(NV_N^{-3/2}+\sqrt{N}V_N^{-1}).
$$
In particular, if $V_N\geq cN$ for some $c>0$ and all $N$ large enough then
$$
\cW_2(W_N,N(0,1))=O(N^{-1/2}).
$$
\end{theorem}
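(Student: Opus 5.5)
We apply Proposition \ref{prop:general_w2} with $d(n,m)=|n-m|$ and $Y_n=V_N^{-1/2}X_n$, so that $W=W_N$ and $E(W^2)=1$. The strategy mirrors the proof of Theorem \ref{thm:w2_sums}. There, every quantity $\zeta,\cE_1,\ldots,\cE_6$ was controlled by covariance-type estimates between a block of coordinates near $n$ and blocks at distance at least some gap $g$. Here we replace Assumption 1 by the classical $\phi$-mixing covariance inequality. If $U$ is measurable with respect to $\sigma\{X_i:i\le k\}$ (or a union of a ``past'' and a ``future'' block) and $V$ with respect to $\sigma\{X_i:i\ge k+g\}$, both bounded, then
$$
|\mathrm{Cov}(U,V)|\le 2\phi(g)\|U\|_{L^1}\|V\|_{L^\infty}\le 2\phi(g)\|U\|_{\infty}\|V\|_\infty .
$$
For the two-sided configurations that appear, namely a middle block $\{i:|i-n|\le a\}$ against the outer set $\{i:|i-n|>a+g\}$, we use the standard extension: split the outer set into its left and right parts and apply the inequality twice, conditioning successively. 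This gives an error of order $\phi(g)$ times a product of sup-norms. It plays the role of \eqref{eq:fcb_many_gaps} with $C_*(N)\Vert F\Vert_{\Lip}g^{-\theta}$ replaced by $C\Vert F\Vert_\infty\phi(g)$.

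The first step is to bound $\zeta$. Each summand is a third moment $E(X_iX_jX_k)$ with $i\le j\le k$. Splitting at the largest gap and using $E X=0$, we get $|E(X_iX_jX_k)|\le C\gamma^3\phi(\max\text{gap})$. Counting exactly as in the proof of Theorem \ref{thm:w2_sums}, with $\phi(g)$ in place of $g^{-\theta}$, gives $|\zeta|\le C\gamma^3V_N^{-3/2}N\sum_j j\phi(j)$.

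The second step is to bound $\cE_1,\ldots,\cE_6$. Each is a sum of covariances between a product of $Y$'s localized near $n$ and a function $\delta_f(\cdot,\cdot)$ of outer partial sums, or of centered quantities like $\overline{\delta_f(W_{n,a},W_{n,b})}$. Since $f''$ is $C_0$-Lipschitz, we have $|\delta_f(W_{n,a},W_{n,b})|\le C_0|Y_{n,\langle a,b\rangle}|\le C\gamma V_N^{-1/2}(b-a)$. This sup-norm bound replaces the Lipschitz-norm bounds of the functions $F_{i,j}$ used there. The covariance inequality with gap equal to the distance between the inner block and the outer block then produces exactly the same sums as in the proof of Theorem \ref{thm:w2_sums}, with $\phi$ in place of $j^{-\theta}$. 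For instance, $\cE_2$ leads to $\sum_m\sum_k m^2\phi(m(k-2))$, and $\cE_3$–$\cE_5$ lead to $\sum m^2k\,\phi(mk(p-2))$. Each of these is bounded by $C\sum_j j^2\phi(j)$ up to harmless constants, using $\sum_{m,k:mk=j}m\le Cj\log j$ and similar divisor-type bounds. Where these divisor bounds are too lossy, we use monotonicity of $\phi$ and the fact that $\sum_k \phi(mk)\le m^{-1}\sum_{j\ge m}\phi(j)$. The result is $|\cE_i|\le C\gamma^4NV_N^{-2}$, so $\sqrt{|\cE_i|}\le C\gamma^2\sqrt N V_N^{-1}$. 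Since $\gamma$ may be normalized or simply bounded below by the normalization, the final bound is stated with $\gamma^3$. The terms involving $\theta_f$ are handled with $|\theta_f|\le C|W_{n,m}-W|^2$ together with $E(W-W_{n,m})^2\le C\gamma^2V_N^{-1}m\sum_j\phi(j)^{1/2}$, or more crudely $\le C\gamma^2 m^2V_N^{-1}$. This is combined with $|E(Y_nY_{n,m})|\le C\gamma^2V_N^{-1}\phi(m)$, and the requirement $\sum m^2\phi(m)<\infty$ is exactly what is needed here.

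The main obstacle is the bookkeeping that guarantees every multiple sum converges under only $\sum j^2\phi(j)<\infty$, rather than the $\theta>3$ polynomial assumption. The quadruple sums in $\cE_4,\cE_5$ carry weights like $m^2k$ against $\phi(mk(p-2))$. We would reindex by $g=mk(p-2)$ and bound the number of representations weighted by $m^2k$ by $Cg^2$ for $p\ge3$, treating $p=3$ separately. This reduces everything to $\sum_g g^2\phi(g)$, possibly after noting that $\phi\le 1$ and that the terms with $p$ small are controlled directly. If this reduction fails for some term, we would instead fall back on the crude estimate $|\delta_f|\le C$ for the offending range, in the spirit of the $m=0,1$ cases in Theorem \ref{Noncon}.
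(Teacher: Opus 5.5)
Your proposal follows the paper's proof: Proposition~\ref{prop:general_w2} with $d(n,m)=|n-m|$, $\phi$-mixing decoupling of the cluster near $n$ from the outer partial sums (your successive conditioning is what Proposition~\ref{YeorStien} packages), the bound $|\delta_f(W_{n,a},W_{n,b})|\le C\gamma V_N^{-1/2}(b-a)$, and the same multiple sums such as $\sum m^2k\,\phi(mk(p-2))$. Only the bookkeeping needs fixing: the pointwise count $\sum_{mk(p-2)=g}m^2k\le Cg^2$ is false (its $p=3$ part equals $g(\sigma(g)-g)$, with $\sigma$ the divisor sum), so it is your monotonicity-of-$\phi$ fallback that reduces these sums to $\sum_g g^2\phi(g)$; the $k=2$ terms of $\cE_2$ must use the factor $\phi(m)$ from $E[Y_nY_{n,m}]$, since $\phi(m(k-2))$ gives no decay there; and for $\gamma<1$ your $\gamma^2\sqrt N V_N^{-1}$ reaches the stated $\gamma^3$ form via $V_N\le C\gamma^2 N$, not by normalization.
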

\begin{remark}\label{RemPhi}
In the weakly stationary case when $X_n$ is not a coboundary and $\sum_n n\phi(n)<\infty$ there exists $\sigma>0$ such that 
$$
\sup_N\left|V_N-\sigma^2 N\right|<\infty.
$$
Therefore,
$$
\left\|W_N-\frac{S_N}{\sigma\sqrt{N}}\right\|_{L^2}=O(1/N).
$$
Using also the Berkes-Phillip lemma \cite[Lemma A.1]{BP} we conclude that when $\sum_n n^2\phi(n)<\infty$ then with $\widetilde W_N=\frac{S_N}{\sigma\sqrt{N}}$ we have
$$
\cW_2(\widetilde W_N,N(0,1))=O(N^{-1/2}).
$$
\end{remark}

\begin{remark}
A notable result of Rio \cite{Rio} shows for stationary sequences with $\sum_{j}j\phi(j)<\infty$ and if $V_N\to\infty$ (and so $V_N/N\to s$ for some $s>0$) the sequences $S_N/\sqrt{sN}$ and $S_N/\sqrt{V_N}$ have optimal uniform CLT rates $O(N^{-1/2})$. Recently \cite{Aust} obtained optimal CLT rates in $\mathcal W_p$ for polynomially fast stationary \textbf{interlaced }$\alpha$-mixing sequences. Also in \cite{DMR23} optimal $\cW_2$-rates where obtained for stationary $\alpha$-mixing sequences with values on a Polish space.
To the best of our knowledge these are the only results in literature when optimal rates  are obtained for non-Markovian sequences or sequences that are not formed by a sufficiently chaotic dynamical system. Note that in Theorem \ref{Psi} we establish the same for unbounded variables, under the stronger $\psi$-mixing condition. 
\end{remark}

\begin{corollary}
If $(X_n)_{n\geq0}$ is weakly stationary\footnote{Namely, $\mathbb E[X_{n+k}X_k]$ depends only on $n$.}, $\sum_j j^{2}\phi(j)<\infty$ 
and it does not have the form $X_n=Y_{n+1}-Y_n$ for some other weakly stationary square integrable sequence $Y_n$ then
$$
\cW_2(W_N,N(0,1))\leq C\gamma^3 N^{-1/2}
$$
and 
$$
\cW_{2}(\widetilde W_N, N(0,1))\leq  C\gamma^3 N^{-1/2}
$$
for some constant $C>0$.
\end{corollary}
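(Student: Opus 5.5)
The plan is to reduce the corollary to Theorem \ref{PhiMix} and Remark \ref{RemPhi}. The work is to show that, under weak stationarity, summable mixing and the non-coboundary hypothesis, $V_N$ grows linearly: $V_N\ge cN$ for some $c>0$ and all large $N$. Both displayed bounds then follow, with the finitely many small $N$ absorbed into the constant.

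\textbf{Step 1: the covariance series.} Write $r(n)=\mathbb E[X_{n+k}X_k]$, which depends only on $n$ by weak stationarity. The standard covariance inequality for $\phi$-mixing bounded variables gives $|r(n)|\le 2\phi(n)\gamma^2$. Hence $\sum_n n|r(n)|<\infty$, since $\sum_j j^2\phi(j)<\infty$ implies $\sum_j j\phi(j)<\infty$. Expanding the variance gives
\[
V_N=N r(0)+2\sum_{n=1}^{N-1}(N-n)r(n)=N\sigma^2-2\sum_{n\ge1}\min(n,N)\,r(n)+2N\sum_{n\ge N} r(n),
\]
where $\sigma^2=r(0)+2\sum_{n\ge1}r(n)$. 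Using $\sum_n n|r(n)|<\infty$, this yields $\sup_N|V_N-\sigma^2N|<\infty$, which is the assertion in Remark \ref{RemPhi}.

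\textbf{Step 2: $\sigma^2>0$ (main obstacle).} Suppose instead $\sigma^2=0$. Then $V_N$ is bounded. Note that $r$ is a positive definite sequence, so it has a spectral measure $F$ on $(-\pi,\pi]$. Because $\sum|r(n)|<\infty$, $F$ has a continuous density $g\ge0$ with $g(0)=\sigma^2/(2\pi)=0$.

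I would avoid spectral calculus and argue in the Hilbert space instead. Write $S_N=\sum_{n<N}X_n$. Since $\sup_N\|S_N\|_{L^2}<\infty$, take a weak $L^2$ limit point $Z$ of the Cesàro averages
\[
U_M=-\frac1M\sum_{N=1}^{M}S_N .
\]
Set $Y_0=Z$ and $Y_n=Z+S_n$. Then $Y_{n+1}-Y_n=X_n$, and $Y_n$ is a weak limit of $-\frac1M\sum_{N\le M}(S_N-S_n)$. One then checks that $\mathbb E[Y_{n+k}Y_k]$ depends only on $n$. This uses the weak stationarity of $X$ together with the fact that the averages become shift invariant: the boundary terms are $O(M^{-1})$ multiplied by bounded $L^2$ norms. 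This step needs care but is the standard ``bounded partial sums imply coboundary'' argument, due to Leonov/Robinson, in the weakly stationary setting.

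The resulting $(Y_n)$ is a weakly stationary square integrable sequence with $X_n=Y_{n+1}-Y_n$, contradicting the hypothesis. Hence $\sigma^2>0$.

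\textbf{Step 3: conclusion.} From Steps 1–2, $V_N\ge \sigma^2N/2$ for $N\ge N_0$. Theorem \ref{PhiMix} then gives
\[
\cW_2(W_N,N(0,1))\le C\gamma^3\bigl(N(\sigma^2N/2)^{-3/2}+\sqrt N(\sigma^2N/2)^{-1}\bigr)\le C'\gamma^3N^{-1/2}.
\]
For $N<N_0$ the distance is bounded, since $\mathbb E W_N^2=1$, so $\cW_2\le 2$. I absorb these cases by enlarging the constant, noting that $\gamma$ is bounded below by a constant depending on $r(0)$.

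For $\widetilde W_N$, Step 1 gives
\[
\|W_N-\widetilde W_N\|_{L^2}=\bigl|1-\sqrt{V_N/(\sigma^2N)}\bigr|=O(N^{-1}).
\]
Using the coupling bound $\cW_2(\widetilde W_N,N(0,1))\le \cW_2(W_N,N(0,1))+\|W_N-\widetilde W_N\|_{L^2}$, which is the Berkes–Philipp-type lemma quoted in Remark \ref{RemPhi}, the second estimate follows.
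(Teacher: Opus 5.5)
Your proposal is correct and follows the same route as the paper. The paper's proof consists only of the observation that $\lim_N V_N/N=\sigma^2$ exists and is positive, after which Theorem~\ref{PhiMix} and Remark~\ref{RemPhi} apply; your Steps 1--2 supply the justification for that observation, namely summable covariances and the Leonov-type argument that bounded partial sums force a coboundary. Your shift-invariance estimate correctly handles the inner products of weak limits, computed as iterated limits.

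The only slip is in the variance identity of Step 1, which should read $V_N=N\sigma^2-2\sum_{n\ge1}\min(n,N)\,r(n)$, without the extra $+2N\sum_{n\ge N}r(n)$. This does not affect the conclusion, since $N\sum_{n\ge N}|r(n)|\le\sum_n n|r(n)|<\infty$.
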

\begin{proof}
In this case the limit 
$$
s=\lim_{N\to\infty}\frac1N\text{Var}(S_N)
$$
exists and it is positive.
\end{proof}

Before proving Theorem \ref{PhiMix}, we recall the following result (see \cite[Corollary 3.3]{YeorStien} or \cite[Corollary 1.3.11]{HK}).
\begin{proposition}\label{YeorStien}
Let $I_1,I_2,...,I_\ell$ be intervals in the positive integers ordered so that $I_i$ is to the left of $I_{i+1}$. Denote by $d_i$ the gap between $I_{i}$ and $I_{i-1}$, where $i\geq 2$. 
Let $Y_j$ be random vectors  measurable with respect to $\sigma\{X_k: k\in I_j\}$. Let $\{\mathcal C_1,...,\mathcal C_d\}$ be a partition of $\{1,...,\ell\}$. Define $Y_{\cC}=(Y_j)_{j\in\mathcal C}$ and let $\widetilde Y_{\cC_j}$ be independent copies of $Y_{\cC_j}$. Let $H=H(y_1,...,y_\ell)$ be a measurable function and suppose that $\|H(Z_1,...,Z_\ell)\|_{L^\infty}<\infty$ where $Z_j$ is either $Y_j$ or $\widetilde  Y_j$, where $\widetilde  Y_j, 1\leq j\leq \ell$ are independent copies of $Y_j$. Let $\gamma$ be the maximum of these $2^\ell$ essential suprema.
Let us view $H$ as a function 
$H=H(y_{\cC_1},...,y_{\cC_d})$ where $y_{\cC}=(y_j)_{j\in\cC}$. Then
$$
\left|\mathbb E[H(Y_{\cC_1},...,Y_{\cC_d})]-\mathbb E[H(\widetilde  Y_{\cC_1},...,\widetilde  Y_{\cC_d})]\right|\leq 4\gamma\sum_{k=2}^{d}\phi(d_k).
$$    
\end{proposition}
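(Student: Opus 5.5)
The plan is a telescoping (hybrid) argument built on one elementary two-block decoupling estimate for $\phi$-mixing. The hypothesis is arranged so that every hybrid we produce is bounded by $\gamma$: each hybrid is $H$ evaluated at a mixture of original and copied blocks, and $\gamma$ is the maximum over all $2^\ell$ such mixtures.

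\textbf{Step 1 (two-block lemma).} Let $U$ be measurable with respect to $\sigma\{X_i:i\le k\}$ and $V$ measurable with respect to $\sigma\{X_i:i\ge k+n\}$. Let $\widetilde V$ be a copy of $V$ independent of $U$, and let $G$ be measurable with $|G(U,V)|,|G(U,\widetilde V)|\le\gamma$ a.s. We aim for
\[
|\mathbb E G(U,V)-\mathbb E G(U,\widetilde V)|\le 2\gamma\,\phi(n).
\]
Take a regular conditional distribution $P_{V\mid U=u}$; this is possible because the $Y_j$ are random vectors. Then
\[
\mathbb E G(U,V)-\mathbb E G(U,\widetilde V)=\int\!\!\int G(u,v)\,\bigl(P_{V\mid U=u}-P_V\bigr)(dv)\,P_U(du).
\]
For $\phi$-mixing, $\sup_{A}|\mathbb P(B\mid A)-\mathbb P(B)|\le\phi(n)$ over past events $A$ with $\mathbb P(A)>0$. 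Applying this to $A=\{U\in D\}$ for arbitrary Borel sets $D$ gives $\|P_{V\mid U=u}-P_V\|_{TV}\le 2\phi(n)$ for $P_U$-a.e. $u$. Integrating $|G|\le\gamma$ against this total variation bound yields the claim. The a.e. bound is only needed on the set where $|G|\le\gamma$, which has full measure for both laws.

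\textbf{Step 2 (telescoping).} We pass from the true joint law of $(Y_1,\dots,Y_\ell)$ to the law under which the families $Y_{\cC_1},\dots,Y_{\cC_d}$ are independent but keep their internal joint law. We do this one cut at a time, sweeping the intervals from right to left.

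At stage $k$ we have a hybrid vector. Its coordinates on $I_k,\dots,I_\ell$ have already been rearranged into the required independent groups. Its coordinates on $I_1,\dots,I_{k-1}$ are still the original ones. The move is to replace the right part, viewed as a function of the $\sigma$-algebra $\sigma\{X_i:i\in I_k\cup\dots\cup I_\ell\}$ together with independent auxiliary randomness, by an independent copy. Step 1 applies to this move, with past $U$ equal to the left part, future $V$ equal to the right part, and gap $d_k$. Any auxiliary independent copies can be conditioned on (Fubini) and absorbed into $G$, so the bound still holds.

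After the cut, the left part and the right part are independent. We then re-glue the coordinates that belong to the same class $\cC_j$ but lie on both sides of the cut. This re-gluing does not change the law of any within-class vector. It does mean the next cut must be applied to a function that also depends on an independent copy, which is harmless by the conditioning remark above.

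A cut is needed only at a boundary where the class assignment changes. Every such cut costs at most $2\gamma\phi(d_k)$. Summing the costs gives a bound of the form $C\gamma\sum_k\phi(d_k)$. The stated constant $4$ and the indexing of the sum by $k=2,\dots,d$ should come out of this count once the intervals are labelled so that the relevant gaps are $d_2,\dots,d_d$, as in \cite[Corollary 3.3]{YeorStien}.

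\textbf{Main obstacle.} I expect the bookkeeping for interleaved classes to be the delicate part. A class $\cC_j$ may occupy non-adjacent intervals, so independence across a cut must be produced without destroying the dependence inside $\cC_j$ across that same cut.

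The first thing I would try is to run the induction on $d$ rather than on $\ell$, using only Step 1 at each stage. At each stage, split off, via a single application of Step 1, the whole family $Y_{\cC_d}$ jointly with the original copies of the other classes. The conditional-distribution form of Step 1 tolerates a non-contiguous future: replacing the future by a copy with the same conditional structure given the other classes is what the regular conditional distribution provides. The inductive hypothesis is then applied to the remaining $d-1$ classes, with $H$ integrated over the independent copy of $Y_{\cC_d}$. That integrated function is still bounded by $\gamma$, which is exactly why $\gamma$ is defined as the maximum over all $2^\ell$ mixed evaluations.
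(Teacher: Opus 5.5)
\textbf{There is a genuine gap, in Step 2.} Your Step 1 is sound, up to the routine countable-generating-class argument that makes the total variation bound hold off a single null set.

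The problem is the ``re-gluing'' step. After a cut, the left and right parts are independent. Re-introducing the dependence between the members of a class $\cC_j$ on the two sides (for instance, drawing the right members from their conditional law given the left ones) changes the joint law of the whole vector. It therefore changes $\mathbb E H$ by an amount you never estimate. That each within-class vector has the right law afterwards is beside the point: you must compare $\mathbb E H$ under the full joint laws before and after. That comparison costs another mixing term, and it needs $|H|\le\gamma$ under laws that are not among the $2^\ell$ mixtures defining $\gamma$.

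Your fallback (induction on $d$) does not repair this. Step 1 is a past/future statement across a single gap. If $\cC_d$ is interleaved with another class, say a block of $\cC_d$ sits between two blocks of $\cC_1$, then $Y_{\cC_d}$ is not measurable with respect to a future $\sigma$-algebra separated from the other classes. So $\phi$-mixing gives no one-step total variation bound for its conditional law given the rest, and the mere existence of a regular conditional distribution says nothing quantitative. Finally, the constant $4$ and the range of summation are deferred, not derived.

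The missing idea is to avoid re-gluing and route both sides through the fully decoupled vector.
\begin{itemize}
\item Let $\hat Y_1,\dots,\hat Y_\ell$ be mutually independent with $\hat Y_j$ distributed as $Y_j$. Sweep from right to left with Step 1. Every intermediate vector $(Y_1,\dots,Y_{k-1},\hat Y_k,\dots,\hat Y_\ell)$ is one of the $2^\ell$ mixtures, so $|\mathbb E H(Y_1,\dots,Y_\ell)-\mathbb E H(\hat Y_1,\dots,\hat Y_\ell)|\le 2\gamma\sum_{k=2}^{\ell}\phi(d_k)$.
\item Treat $(\widetilde Y_{\cC_1},\dots,\widetilde Y_{\cC_d})$ the same way, one class at a time. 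Condition on the other classes, which are independent, and decouple the blocks of one class one at a time in interval order.
\item If $I_k$ is not the first block of its class, its gap to the preceding block of the same class is at least $d_k$, and $\phi$ is nonincreasing. Hence each $k\ge2$ is charged at most once, and this chain also costs at most $2\gamma\sum_{k=2}^{\ell}\phi(d_k)$.
\item This second chain needs $|H|\le\gamma$ under the grouped mixtures as well. That is automatic when $H$ is bounded on the range of the variables, as in every application in the paper.
\item The triangle inequality then gives the constant $4=2+2$, which is the argument behind the cited \cite[Corollary 3.3]{YeorStien} and \cite[Corollary 1.3.11]{HK}.
\end{itemize}

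The sum must run over all gaps $k=2,\dots,\ell$; no relabelling yields the upper limit $d$. Take $X_{2j}=X_{2j+1}=\xi_j$ with $\xi_j$ i.i.d.\ Rademacher, so $\phi(n)=0$ for $n\ge2$. Let $I_1=\{2\}$, $I_2=\{10\}$, $I_3=\{11\}$, $\cC_1=\{1,2\}$, $\cC_2=\{3\}$ and $H=y_2y_3$. Then the left-hand side equals $1$, while $4\gamma\phi(d_2)=0$.
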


\begin{proof}[Proof of Theorem \ref{PhiMix}]
 Notice that it is enough to prove the theorem when $\gamma\leq 1$, for otherwise we can just normalize by $\gamma$ (the factor $\gamma^3$ from scaling the variance).
Take $d(n,m)=|n-m|$. Let us estimate all the error term in Proposition \ref{prop:general_w2}.

Throughout the proof, we use $C$ to denote a generic absolute constant.

\noindent\textbf{Bound on $|\zeta|$.} 
By applying Proposition \ref{YeorStien} we see that 
$$
|\zeta|\leq CV_N^{-3/2}\sum_{n=0}^{N-1}\sum_{m=1}^{N-1}\sum_{k=3}^{N-1}\phi(m(k-2)+1)+
CV_N^{-3/2}\sum_{n=0}^{N-1}\sum_{m=1}^{N-1}\phi(m)
$$
$$
+CV_N^{-3/2}\sum_{n=0}^{N-1}\sum_{k=1}^{N-1}\phi(k)+CV_N^{-3/2}\sum_{n=0}^{N-1}\sum_{m=1}^{N-1}\phi(m)+CV_N^{-3/2}N\leq C_0NV_N^{-3/2}.
$$

\noindent\textbf{Bound on $|\mathcal E_1|$.}
By   Proposition \ref{YeorStien}, and using that $f''$ is Lipschitz continuous,  for each $u\in[0,1]$ we have
$$
\left|E \biggl\{ \sum_{n=0}^{N-1} 
			\sum_{m=0}^{N-1}   Y_n (Y_{n,m})^2 
			  \delta_f(   W_{n,m} + uY_{n,m}, W_{n,m}) 
			\biggr\}\right|
$$
$$
 \leq CV_N^{-2}N\sum_{m=1}^{N-1}\phi(m)\leq CV_N^{-2}N
$$
and so 
$$
|\mathcal E_1|\leq C_0V_N^{-2}N.
$$

\noindent\textbf{Bound on $|\mathcal E_2|$.}
Let us write 
$$
\mathcal E_2=I_1+I_2+I_3
$$
where $I_i$ is the $i$-th sum. By Proposition \ref{YeorStien} we have 
$$
|I_1|\leq CNV_N^{-2}+CV_N^{-2}\sum_{n=0}^{N-1}\sum_{m=1}^{N-1}m^{2}\phi(m)\leq 
CNV_N^{-2}\leq 
C'V_N^{-2}N.
$$
Moreover, using the Lipschitz continuity of $f''$ we see that
$$
|I_2|\leq CNV_N^{-2}+CV_N^{-2}\sum_{n=0}^{N-1}\sum_{k=2}^{N-1}\phi(k-2)\leq C'NV_N^{-2}.
$$
Furthermore, 
$$
|I_3|\leq CV_N^{-2}\sum_{n=0}^{N-1}\sum_{m=1}^{N-1}\sum_{j\in[n]_m}\sum_{k=2}^{N-1}m\phi(m)\phi(m(k-2))
$$
$$
\leq CV_{N}^{-2}N\sum_{m=1}^{N-1}m^2\phi(m)\sum_{k=2}^\infty\phi(m(k-2))\leq C'V_N^{-2}N.
$$
We thus conclude that 
$$
|\mathcal E_2|\leq C''V_N^{-2}N.
$$

\noindent\textbf{Bound on $|\mathcal E_3|$.}
Take $u\in[0,1]$. Then by  Proposition \ref{YeorStien},
$$
\left|E\biggl\{ \sum_{n=0}^{N-1} 
			\sum_{m=1}^{N-1} \sum_{k=2}^{N-1}
			\overline{
				\delta_f(  W_{n,mk}  + u Y_{n, \langle m(k-1), mk \rangle } ,  W_{n, 2mk} ) 
				Y_{n, \langle m(k-1), mk \rangle } }  
			Y_{n,m} Y_n \biggr\}\right|
$$
$$
\leq CV_N^{-2}\sum_{n=0}^{N-1} 
			\sum_{m=1}^{N-1}\left(m\phi(m)+\sum_{k=3}^{N-1}mk\phi(m(k-2))\right) 
$$
$$
\leq C'V_N^{-2}N+C'V_N^{-2}N\sum_{m=1}^\infty m^2\phi(m)\leq C''V_N^{-2}N.
$$
Moreover,
$$
\left|E\biggl\{ \sum_{n=0}^{N-1} \sum_{k=1}^{N-1} 
			\overline{ 
				\delta_f( W_{n,k} + u Y_{n,k},  W_{n, 2k} )    Y_{n,k} } 
			Y_n^2 \biggr\}\right|
$$
$$
\leq CV_N^{-2}N\sum_{k=1}^N k\phi(k)\leq C'V_N^{-2}N.
$$
Thus, 
$$
|\cE_3|\leq \tilde CV_N^{-2}N.
$$

The rest of the terms are bounded similarly. For instance, let us handle the most complicated expression
$$
E \biggl\{ \sum_{n=0}^{N-1} 
			\sum_{m=1}^{N-1} \sum_{k=2}^{N-1} 
			\sum_{p=3}^{N-1}  \overline{   
			\delta_f( W_{n, mk (p-1)}, W_{n, mkp}  )	
				 }
			Y_{n, \langle m(k-1), mk \rangle}  Y_{n,m} Y_n \biggl\}.
$$
By  Proposition \ref{YeorStien} and the Lipschitz continuity of $f''$ we have
$$
\left|E \biggl\{\overline{   
			\delta_f( W_{n, mk (p-1)}, W_{n, mkp}  )	
				 }
			Y_{n, \langle m(k-1), mk \rangle}  Y_{n,m} Y_n \biggl\}\right|\leq CV_N^{-2}m^2k\phi(mk(p-2)).
$$
Now,
$$
\sum_{n=0}^{N-1} 
			\sum_{m=1}^{N-1} \sum_{k=2}^{N-1} 
			\sum_{p=3}^{N-1}m^2k\phi(mk(p-2))\leq CN\sum_{m=1}^{N-1}\sum_{k=2}^{N-1}\sum_{\ell=1}^{N-3}m^2k\phi(mk\ell)\leq CN\sum_{s=1}^{\infty}s^2\phi(s).
$$
\end{proof}

\section{Optimal CLT $\cW_2$-rates for  $\psi$-mixing sequences under the fourth moment condition}\label{Sec5}
Let us consider a sequence of real-value zero mean square integrable random variables $X_0,X_1,...$ on some probability space $(\Omega,\mathcal F,\mathbb P)$. Recall that the $\psi$-mixing (weak dependence) coefficients of the sequence are given by 
$$
\psi(n)=\sup_k\sup\left\{\left|\frac{\mathbb P(A\cap B)}{\mathbb P(A)\mathbb P(B)}-1\right|: \mathbb P(A)\mathbb P(B)>0,\, A\in\sigma_{-\infty,k},\,
B\in\sigma_{k+n,\infty}\right\}
$$
where $\sigma_{-\infty,k}=\sigma\{X_i: i\leq k\}$ and $\sigma_{k+n,\infty}=\sigma\{X_i: i\geq k+n\}$.
Let $S_N=\sum_{n=0}^{N-1}X_n$ and let $V_N=\text{Var}(S_N)$. 

\begin{theorem}\label{Psi}
Suppose that $\gamma_4=\sup_j\|X_j\|_{L^4}<\infty$ and that 
$$
\sum_{j=1}^\infty j^2\psi(j)<\infty.
$$ 
Then   there is an absolute constant $C>0$ such that 
 with $W_N=(V_N)^{-1/2}S_N$,
$$
\cW_2(W_N,N(0,1))\leq C\gamma_4^3(NV_N^{-3/2}+\sqrt{N}V_N^{-1}).
$$
In particular, if $V_N\geq cN$ for some $c>0$ and all $N$ large enough then 
$$
\cW_2(W_N,N(0,1))=O(N^{-1/2}).
$$
Consequently, if $(X_j)$ is weakly stationary and it does not have the form $X_n=Y_{n+1}-Y_n$ then 
$$
\cW_2(W_N,N(0,1))=O(N^{-1/2}).
$$
\end{theorem}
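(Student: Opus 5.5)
Following the proof of Theorem \ref{PhiMix}, I will apply Proposition \ref{prop:general_w2} with $d(n,m)=|n-m|$ and $Y_n=V_N^{-1/2}X_n$, and bound $|\zeta|$ by $C\gamma_4^3NV_N^{-3/2}$ and each $|\cE_i|$ by $C\gamma_4^4NV_N^{-2}$ (after normalizing $\gamma_4\le 1$). Then $\sqrt{|\cE_i|}\le C\gamma_4^2\sqrt N V_N^{-1}$, which gives the claimed bound. The only new ingredient compared with Section \ref{Sec4} is a decoupling inequality for unbounded variables. Boundedness will be replaced by $\psi$-mixing, whose multiplicative form allows $L^1$ control. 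The key tool I will prove first is a $\psi$-analogue of Proposition \ref{YeorStien}. For blocks $I_1<\dots<I_\ell$ with gaps $d_k$, a partition $\cC_1,\dots,\cC_d$, and independent copies $\widetilde Y_{\cC_j}$, I claim
$$\bigl|\mathbb E[H(Y_{\cC_1},\dots,Y_{\cC_d})]-\mathbb E[H(\widetilde Y_{\cC_1},\dots,\widetilde Y_{\cC_d})]\bigr|\le C\sum_{k}\psi(d_k)\,\mathbb E|H(\widetilde Y_{\cC_1},\dots,\widetilde Y_{\cC_d})|,$$
valid whenever $\psi(d_k)\le 1$. The base case uses the fact that $|\frac{dP_{(U,V)}}{d(P_U\otimes P_V)}-1|\le\psi(n)$ for $U\in\sigma_{-\infty,k}$, $V\in\sigma_{k+n,\infty}$. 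The induction over gaps proceeds as in \cite[Corollary 3.3]{YeorStien}, with the product of factors $(1+\psi)$ controlled for large gaps; small gaps contribute only finitely many terms and are treated directly.

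With this tool, every term reduces to a product of expectations under product measure. For example,
$$|\mathbb E(X_nX_{n+m}X_{n+m+\ell})|\le C\psi(\max(m,\ell))\,\mathbb E|X_n|\,\mathbb E|X_{n+m}X_{n+m+\ell}|\le C\gamma_4^3\psi(\max(m,\ell)),$$
using the centering of the isolated factor. In the $\cE_i$ terms, the Lipschitz property of $f''$ gives $|\delta_f(a,b)|\le C_0|a-b|$, and the increments are block sums such as $Y_{n,\langle m(k-1),mk\rangle}$. Thus each summand's absolute value under the product measure is a product of at most four $Y$'s, $L^1$-bounded via Hölder by $\gamma_4^4V_N^{-2}$ times the number of terms, for instance $m^2k$ in $\cE_4$. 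The combinatorics are then literally those of Theorem \ref{PhiMix}, with $\phi$ replaced by $\psi$. This yields $N\sum_s s^2\psi(s)<\infty$ for the worst term, and the cases $\theta_f$ in $\cE_2$ and $\cE_1$ use $|\theta_f(x,y)|\le C|x-y|^2$ and the bound $\mathbb E(W_{n,m}-W)^2\le C m$. That variance estimate also follows from the decoupling with $\sum\psi<\infty$.

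The main obstacle is a hidden point in the decoupling step: when $H$ contains $\delta_f$ evaluated at sums over far blocks, such as $W_{n,mk(p-1)}$, the relevant quantity under product measure still involves unbounded sums. I must verify that I only ever need $\mathbb E|H|$ under independence, and never an $L^\infty$ bound. I will use the Lipschitz bound to reduce $|\delta_f|$ to $|Y_{n,\langle\cdot,\cdot\rangle}|$ of the gap block, which lies in a near block, so that four-factor Hölder with $\gamma_4$ suffices. This is exactly where the fourth moment enters. I also need to handle the centered terms $\overline{\delta_f(\cdots)}$, which I will decouple twice. The final weakly stationary statement follows as in Remark \ref{RemPhi}, since $\sum n\psi(n)<\infty$ gives $V_N\ge cN$ for non-coboundaries.
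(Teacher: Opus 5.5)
Your proposal is correct and is essentially the paper's argument. The paper also normalizes to $\gamma_4\le 1$, replaces every use of Proposition \ref{YeorStien} by the $\psi$-mixing decoupling inequality of Corollary \ref{Corrr}, and reruns the combinatorics of Theorem \ref{PhiMix}, using that every integrand in $\zeta$ and $\cE_1,\ldots,\cE_6$ is a product of at most four block sums whose $L^1$ norms are controlled by H\"older; Corollary \ref{Corrr} rests on Lemma \ref{KifLemm}, which is equivalent to your bound $|dP_{(U,V)}/d(P_U\otimes P_V)-1|\le\psi$, and the only cosmetic difference is that the paper bounds the maximum $L^1$ norm over all $2^\ell$ partially decoupled laws rather than the fully decoupled law with $(1+\psi)$ factors, which H\"older handles identically since it only sees the marginals.
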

\begin{remark}
As noted in the previous section, for sufficiently fast uniformly bounded stationary $\phi$-mixing sequences $X_j$ optimal uniform CLT rates are known. However, to the best of our knowledge, Theorem \ref{Psi} is the first result where optimal CLT rates in any sense are obtained under assumptions on regular (not interlaced) mixing coefficients for unbounded variables $X_j$. Moreover, this is obtained under the fourth moment condition $\gamma_4<\infty$ and the dependence on $\gamma_4$ is like in the best known results in the iid case.  
\end{remark}
\begin{remark}
 Like in Remark \ref{RemPhi}, we get optimal rates in the weakly stationary case with the $\sqrt{N}$-normalization.   
\end{remark}
\subsection*{Proof of Theorem \ref{Psi}.}
First, recall the following result \cite[Lemma 3.1]{Kif}. 
\begin{lemma}\label{KifLemm}
Let $\mathcal G,\mathcal H\subset\mathcal F$ be two sub-$\sigma$-algebras. Let 
$$
\psi(\cG,\cH)=
\sup\left\{\left|\frac{\mathbb P(A\cap B)}{\mathbb P(A)\mathbb P(B)}-1\right|: \mathbb P(A)\mathbb P(B)>0, A\in\cG, B\in\cH\right\}.
$$
Let $X$ be a random vector measurable with respect to $\cG$ an $Y$ be a random vector measurable with respect to $\cH$. Let $H(x,y)$ be a function such that for all $x$, $r(x)=\mathbb E[|H(x,Y)|]<\infty$ and $\mathbb E[|H(X,Y)|]<\infty$. Set $g(x)=\mathbb E[H(x,Y)]$. Then almost surely we have
$$
\left|\mathbb E[H(X,Y)|\mathcal G]-g(X)\right|\leq\psi(\cG,\cH)r(X).
$$
In particular, if $\widetilde X$ and $\widetilde Y$ are two independent copies of $X$ and $Y$ then 
$$
\left|\mathbb E[H(X,Y)]- \mathbb E[H(\widetilde  X,\widetilde Y)]\right|\leq \psi(\cG,\cH)\mathbb E[|H(\widetilde X,\widetilde  Y)|].
$$
 \end{lemma}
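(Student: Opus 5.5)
The plan is to reduce everything to a comparison between the conditional law of $Y$ given $\cG$ and the unconditional law of $Y$. We may assume $\psi:=\psi(\cG,\cH)<\infty$, since otherwise there is nothing to prove.

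\textbf{Step 1: a pointwise bound for one event.} Fix $B\in\cH$ and set $p=\mathbb P(B|\cG)$. We claim that
$$
|p-\mathbb P(B)|\le \psi\,\mathbb P(B)\quad\text{a.s.}
$$
If $\mathbb P(B)=0$ this holds because $p=0$ a.s. Otherwise, suppose $A=\{p>(1+\psi)\mathbb P(B)\}\in\cG$ has positive probability. Then
$$
\mathbb P(A\cap B)=E[\mathbf 1_A p]>(1+\psi)\mathbb P(A)\mathbb P(B),
$$
which contradicts the definition of $\psi$. The lower bound is obtained in the same way using $\{p<(1-\psi)\mathbb P(B)\}$.

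\textbf{Step 2: upgrading to a comparison of measures.} Let $Y$ take values in $\bR^d$ with law $\nu$, and let $(\mu_\omega)$ be a regular conditional distribution of $Y$ given $\cG$; this exists since $\bR^d$ is Polish. For $B=\{Y\in R\}\in\cH$ we have $\mathbb P(B|\cG)(\omega)=\mu_\omega(R)$. Apply Step 1 to the countable family of rectangles $R$ with rational endpoints. This gives a single null set outside of which
$$
(1-\psi)\nu(R)\le\mu_\omega(R)\le(1+\psi)\nu(R)
$$
for all such $R$. Both sides of each inequality are finite measures, so a monotone class argument extends them to all Borel $R$. Consequently, for a.e. $\omega$ the signed measure $\mu_\omega-\nu$ has total variation measure bounded by $\psi\nu$. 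We expect this step, namely passing from event-by-event a.s. bounds to a single full-measure set valid for all Borel sets simultaneously, to be the main technical point.

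\textbf{Step 3: disintegration.} Assume $H$ is jointly Borel measurable. Since $X$ is $\cG$-measurable and $H(X,Y)$ is integrable, the standard disintegration formula gives
$$
E[H(X,Y)|\cG](\omega)=\int H(X(\omega),y)\,\mu_\omega(dy)\quad\text{a.s.}
$$
This integral is finite a.s. because $\mu_\omega\le(1+\psi)\nu$ and $r(x)<\infty$ for every $x$. It follows that
$$
\bigl|E[H(X,Y)|\cG]-g(X)\bigr|=\biggl|\int H(X(\omega),y)\,(\mu_\omega-\nu)(dy)\biggr|\le\psi\int|H(X(\omega),y)|\,\nu(dy)=\psi\, r(X).
$$

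\textbf{Step 4: the ``in particular'' claim.} Take expectations in the bound of Step 3. Since $\widetilde X$ and $\widetilde Y$ are independent with the laws of $X$ and $Y$, Fubini gives
$$
E[g(X)]=E[H(\widetilde X,\widetilde Y)]\quad\text{and}\quad E[r(X)]=E|H(\widetilde X,\widetilde Y)|.
$$
This yields the second inequality.
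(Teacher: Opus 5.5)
Your proof is correct. The paper gives no argument for this lemma: it is quoted from \cite[Lemma 3.1]{Kif}, so there is no in-paper proof to compare against.

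Your route is clean and self-contained. You show that, off a single null set, the regular conditional law $\mu_\omega$ of $Y$ given $\cG$ satisfies $(1-\psi)\nu\le\mu_\omega\le(1+\psi)\nu$ on all Borel sets. You then integrate $H(X(\omega),\cdot)$ against $\mu_\omega-\nu$. This gives slightly more than is stated, namely one exceptional null set that serves every $H$ at once. It also handles the passage from single events to general $H$ once, at the level of measures. An alternative that avoids regular conditional distributions would first treat $H(x,y)=\sum_i\mathbf 1_{\Gamma_i}(x)h_i(y)$ and then approximate. That is more delicate here, because the bound carries the unbounded weight $r(X)$ and must hold pointwise almost surely. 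The only price of your approach is the existence of regular conditional distributions, which is harmless because $Y$ is a random vector.

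Two details should be tightened.

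\textbf{Step 2.} The class of Borel $R$ satisfying either one-sided inequality (e.g.\ $\mu_\omega(R)\le(1+\psi)\nu(R)$) is closed under monotone limits and finite disjoint unions. It is not closed under complements, since the two sides have different total masses. So a $\pi$--$\lambda$ argument on rectangles does not apply directly. Instead, apply Step~1 to the countable algebra of finite disjoint unions of half-open rectangles with endpoints in $\mathbb Q\cup\{\pm\infty\}$, and then invoke the monotone class theorem.

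\textbf{Step 4.} The Fubini identity $E[g(X)]=E[H(\widetilde X,\widetilde Y)]$ requires $E[r(X)]<\infty$.
\begin{itemize}
\item For $\psi<1$ this follows from Step~2: $\nu\le(1-\psi)^{-1}\mu_\omega$ gives $E[r(X)]\le(1-\psi)^{-1}E|H(X,Y)|$.
\item For $\psi\ge1$ with $E[r(X)]=\infty$, the right-hand side is infinite, so there is nothing to prove.
\end{itemize}
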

 
 The following result follows by induction.
\begin{corollary}
Let $I_1,I_2,...,I_\ell$ be intervals in the positive integers ordered so that $I_i$ is to the left of $I_{i+1}$. Denote by $d_i$ the gap between $I_{i}$ and $I_{i-1}$, where $i>2$. 
Let $Y_j$ be random vectors  measurable with respect to $\sigma\{X_k: k\in I_j\}$, and let $\widetilde Y_j$ be independent copies.   
Let $H=H(y_1,...,y_\ell)$ be a measurable function.
Suppose that for each $j$  we have $\gamma_j=\mathbb E[|H(Y_{1},...,Y_{j-1},\widetilde  Y_{j},...,\widetilde  Y_\ell)|]<\infty$. 
Let $\gamma=\max\{\gamma_j: 1\leq j\leq\ell+1\}$.
Then 
$$
\left|\mathbb E[H(Y_1,...,Y_\ell)]-\mathbb E[H(\widetilde Y_1,...,\widetilde Y_\ell)]\right|\leq\gamma\sum_{k=2}^{\ell}\psi(d_k).
$$
\end{corollary}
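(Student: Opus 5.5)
The plan is a telescoping (hybrid) argument that replaces the blocks $Y_j$ by their independent copies one at a time, and applies the second assertion of Lemma \ref{KifLemm} at each step.

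We may take the copies $\widetilde Y_1,\dots,\widetilde Y_\ell$ independent of each other and of $(Y_1,\dots,Y_\ell)$. For $1\le j\le \ell+1$ set
$$
D_j=\mathbb E\big[H(Y_1,\dots,Y_{j-1},\widetilde Y_j,\dots,\widetilde Y_\ell)\big],
$$
so that $D_{\ell+1}=\mathbb E[H(Y_1,\dots,Y_\ell)]$ and $D_1=\mathbb E[H(\widetilde Y_1,\dots,\widetilde Y_\ell)]$. Each $D_j$ is well defined since $\gamma_j<\infty$. Then
$$
D_{\ell+1}-D_1=\sum_{j=1}^{\ell}(D_{j+1}-D_j).
$$
The term $j=1$ vanishes: $Y_1$ and $\widetilde Y_1$ have the same law and are independent of $(\widetilde Y_2,\dots,\widetilde Y_\ell)$, so the two joint laws coincide. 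It therefore remains to show that $|D_{j+1}-D_j|\le \gamma\,\psi(d_j)$ for $2\le j\le \ell$.

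Fix $2\le j\le\ell$ and integrate out the tail copies. Define
$$
H_j(x,y)=\mathbb E\big[H(x,y,\widetilde Y_{j+1},\dots,\widetilde Y_\ell)\big],
$$
with $x$ standing for $(y_1,\dots,y_{j-1})$. Since the tail copies are independent of everything else, Fubini gives $D_{j+1}=\mathbb E[H_j(X,Y)]$ with $X=(Y_1,\dots,Y_{j-1})$ and $Y=Y_j$. It also gives $D_j=\mathbb E[H_j(\widetilde X,\widetilde Y)]$, where $\widetilde X$ has the joint law of $X$ and $\widetilde Y=\widetilde Y_j$ is an independent copy of $Y$.

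Now take $\mathcal G=\sigma\{X_k: k\le \max I_{j-1}\}$ and $\mathcal H=\sigma\{X_k: k\ge \min I_j\}$. Then $X$ is $\mathcal G$-measurable, $Y$ is $\mathcal H$-measurable, and by the definition of the $\psi$-mixing coefficients (with $k=\max I_{j-1}$ and gap $d_j$) we have $\psi(\mathcal G,\mathcal H)\le\psi(d_j)$. The second assertion of Lemma \ref{KifLemm} then yields
$$
|D_{j+1}-D_j|\le \psi(d_j)\,\mathbb E\big[|H_j(\widetilde X,\widetilde Y)|\big]\le \psi(d_j)\,\mathbb E\big[|H(Y_1,\dots,Y_{j-1},\widetilde Y_j,\dots,\widetilde Y_\ell)|\big]=\psi(d_j)\gamma_j.
$$
Here we used $|H_j|\le \mathbb E|H(\cdot,\cdot,\widetilde Y_{j+1},\dots)|$ together with Fubini. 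Summing over $j$ and using $\gamma_j\le\gamma$ gives the claim.

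The only delicate point is the integrability hypothesis of Lemma \ref{KifLemm}. It asks that $r(x)=\mathbb E|H_j(x,Y)|<\infty$ for every $x$, whereas we only know that $\mathbb E[r(\widetilde X)]\le\gamma_j<\infty$, so $r$ is finite only for $\mathcal L(X)$-a.e.\ $x$. I would handle this in one of two ways:
\begin{enumerate}
\item Redefine $H_j(x,\cdot)=0$ on the null set where $r(x)=\infty$. This changes neither $D_{j+1}$ nor $D_j$, because $X$ and $\widetilde X$ have the same law.
\item Apply the lemma to the truncations $H_j\mathbf 1_{\{r\le M\}}$ and let $M\to\infty$ by dominated convergence.
\end{enumerate}
With either fix, the rest of the argument is the routine telescoping bookkeeping described above.
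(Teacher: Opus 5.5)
Your argument is correct and is essentially the paper's proof. The paper only says the corollary follows from Lemma \ref{KifLemm} by induction on the number of blocks, and your telescoping sum, which swaps in one independent copy at a time and gets $|D_{j+1}-D_j|\le \psi(d_j)\gamma_j$ from the lemma's second assertion, is that induction written out. You also correctly handle the hypothesis $r(x)<\infty$ for every $x$ by modifying $H_j$ on an $\mathcal L(X)$-null set, a point the paper leaves implicit.
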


Using the above corollary and the arguments in the proof of \cite[Corollary 3.3]{YeorStien} or \cite[Corollary 1.3.11]{HK} we get the following result, which is a version of Proposition \ref{YeorStien}.
\begin{corollary}\label{Corrr}
Let $I_1,I_2,...,I_\ell$ be intervals in the positive integers ordered so that $I_i$ is to the left of $I_{i+1}$. Denote by $d_i$ the gap between $I_{i}$ and $I_{i-1}$, where $i>2$. 
Let $Y_j$ be random vectors  measurable with respect to $\sigma\{X_k: k\in I_j\}$. Let $\{\mathcal C_1,...,\mathcal C_d\}$ be a partition of $\{1,...,\ell\}$. Define $Y_{\cC}=(Y_j)_{j\in\mathcal C}$ and let $\widetilde Y_{\cC_j}$ be independent copies of $Y_{\cC_j}$. Let $H=H(y_1,...,y_\ell)$ be a measurable function and suppose that $\mathbb E[|H(Z_1,...,Z_\ell)|]<\infty$ where $Z_j$ is either $Y_j$ or $\widetilde Y_j$, where $\widetilde Y_j, 1\leq j\leq \ell$ are independent copies of $Y_j$. Let $\gamma$ be the maximum of these $2^\ell$ expectations.
Let us view $H$ as a function 
$H=H(y_{\cC_1},...,y_{\cC_d})$ where $y_{\cC}=(y_j)_{j\in\cC}$. Then
$$
\left|\mathbb E[H(Y_{\cC_1},...,Y_{\cC_d})]-\mathbb E[H(\widetilde Y_{\cC_1},...,\widetilde Y_{\cC_d})]\right|\leq 2\gamma\sum_{k=2}^{d}\psi(d_k).
$$
\end{corollary}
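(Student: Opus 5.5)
We reduce Corollary \ref{Corrr} to the preceding corollary, using the fully independent vector $(\widetilde Y_1,\ldots,\widetilde Y_\ell)$ as an intermediate object, as in the proof of \cite[Corollary 3.3]{YeorStien}. Here $\widetilde Y_1,\ldots,\widetilde Y_\ell$ are mutually independent, each $\widetilde Y_j$ has the law of $Y_j$, and they are independent of everything else. The triangle inequality gives
$$
\bigl|\mathbb E[H(Y_{\cC_1},\ldots,Y_{\cC_d})]-\mathbb E[H(\widetilde Y_{\cC_1},\ldots,\widetilde Y_{\cC_d})]\bigr|\le \Delta_1+\Delta_2 ,
$$
with
$$
\Delta_1=\bigl|\mathbb E[H(Y_1,\ldots,Y_\ell)]-\mathbb E[H(\widetilde Y_1,\ldots,\widetilde Y_\ell)]\bigr|
$$
and $\Delta_2$ the analogous difference between the fully independent vector and the group-independent vector $(\widetilde Y_{\cC_1},\ldots,\widetilde Y_{\cC_d})$. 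The plan is to bound each of $\Delta_1$ and $\Delta_2$ by $\gamma\sum_k\psi(d_k)$; the factor $2$ in the statement comes from adding them.

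\textbf{Bounding $\Delta_1$.} This is exactly the preceding corollary applied to the blocks $Y_1,\ldots,Y_\ell$. It gives $\Delta_1\le\gamma\sum_{k}\psi(d_k)$. The expectations $\gamma_j$ required there are expectations of $H$ at vectors whose entries are each either $Y_j$ or $\widetilde Y_j$, so they are controlled by $\gamma$.

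\textbf{Bounding $\Delta_2$.} We pass from the group-independent vector to the fully independent one group by group. Let $Z^{(j)}$ be the vector in which the groups $\cC_1,\ldots,\cC_j$ have been split into independent blocks and the groups $\cC_{j+1},\ldots,\cC_d$ are still independent copies $\widetilde Y_{\cC_{j+1}},\ldots,\widetilde Y_{\cC_d}$ with their internal dependence. Then $\mathbb E[H(Z^{(j-1)})]-\mathbb E[H(Z^{(j)})]$ only changes group $\cC_j$. All other coordinates are independent of it, so by Fubini this difference is the expectation, over the other coordinates, of a difference for the function $y_{\cC_j}\mapsto H(\ldots,y_{\cC_j},\ldots)$ with the other coordinates frozen. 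We apply the preceding corollary to this function, with the blocks $\{I_i: i\in\cC_j\}$, and then integrate. Write $\cC_j=\{i_1<\cdots<i_s\}$. The gap between $I_{i_t}$ and $I_{i_{t+1}}$ is at least $d_{i_{t+1}}$. Since $\psi(n)$ is nonincreasing in $n$ (the $\sigma$-algebras shrink as the gap grows), the change is at most $\gamma\sum_{t=1}^{s-1}\psi(d_{i_{t+1}})$. Each index $i\ge2$ is the non-minimal element of at most one group, so summing over $j$ gives $\Delta_2\le\gamma\sum_{k=2}^{\ell}\psi(d_k)$.

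\textbf{Main obstacle.} The delicate point is integrability. The hybrid vectors in the step for $\Delta_2$ mix genuinely dependent sub-blocks of one group with independent copies of the others, and we must check that all expectations entering the preceding corollary are finite and bounded by $\gamma$. I would handle this by reading the hypothesis of Corollary \ref{Corrr} as covering every configuration in which each coordinate is either the original block or an independent copy, with jointly dependent originals allowed. This is the same convention as in \cite[Corollary 1.3.11]{HK}, and it is automatic in our applications, where $H$ is a product of variables with finite fourth moments. The remaining bookkeeping is to state the final sum over the gap indices $d_k$ consistently with the indexing in the statement.
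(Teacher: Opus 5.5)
Your route is the one the paper intends; it only points to the argument of \cite[Corollary 3.3]{YeorStien}. You compare both the original vector and the group-independent vector with the fully independent vector $(\widetilde Y_1,\ldots,\widetilde Y_\ell)$, paying $\gamma\sum_k\psi(d_k)$ each time, and this produces the factor $2$. Your bound on $\Delta_1$ is fine. The step for $\Delta_2$, however, does not go through as written.

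When you freeze the coordinates $z$ outside $\mathcal C_j$ and apply the preceding corollary to $h_z$, the constant it returns is $\gamma(z)=\max_r\gamma_r(z)$. This is a maximum of $z$-dependent expectations of $|h_z|$ over the hybrid configurations of $\mathcal C_j$. Integrating gives $\int\max_r\gamma_r(z)\,d\mu(z)$, whereas your hypothesis only controls $\max_r\int\gamma_r(z)\,d\mu(z)\le\gamma$. In general you only get $\int\max_r\gamma_r\le\sum_r\int\gamma_r\le(|\mathcal C_j|+1)\gamma$, which ruins the constant. In the $\phi$-mixing version with essential suprema this is harmless, since $\sup_z\|h_z\|_\infty\le\|H\|_\infty$; with $L^1$-type constants it is not.

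The fix is to use the bound that the inductive proof of the preceding corollary actually delivers, namely $\sum_{k=2}^{\ell}\gamma_k\psi(d_k)$. Here each gap is weighted by its own $\gamma_k=\mathbb E|H(Y_1,\ldots,Y_{k-1},\widetilde Y_k,\ldots,\widetilde Y_\ell)|$, because the $k$-th telescoping step is a single application of Lemma \ref{KifLemm}. This bound is linear in the $\gamma_k$, so integrating over $z$ gives $\gamma\sum_t\psi(d_{i_{t+1}})$ as you wanted. Alternatively, skip the freezing and apply Lemma \ref{KifLemm} at each cut with the independent coordinates adjoined to the past $\sigma$-algebra $\mathcal G$. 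Adjoining a $\sigma$-algebra independent of $\mathcal G\vee\mathcal H$ leaves $\psi(\mathcal G,\mathcal H)$ unchanged.

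On the two points you flagged. First, your broadened reading of the hypothesis is genuinely needed: the literal $2^\ell$ configurations do not even contain $(\widetilde Y_{\mathcal C_1},\ldots,\widetilde Y_{\mathcal C_d})$. In the application to Theorem \ref{Psi} it is automatic by H\"older.

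Second, the upper limit of the sum is not mere bookkeeping. Read with $d$ equal to the number of groups, the inequality is false. Take the $1$-dependent sequence $X_n=\xi_n+\xi_{n+1}$ with i.i.d.\ bounded nondegenerate $\xi_n$, and set $\ell=4$, $\mathcal C_1=\{1,3\}$, $\mathcal C_2=\{2,4\}$, $d_2\ge2$, $I_3,I_4$ adjacent singletons, and $H=y_3y_4$. The right side is $2\gamma\psi(d_2)=0$, while the left side equals $\mathrm{Var}(\xi_1)>0$. So the correct statement has $\sum_{k=2}^{\ell}\psi(d_k)$, and that is exactly the version your argument, once repaired, proves.
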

Now the proof of Theorem \ref{Psi} follows by the same arguments 
as those used
 in the proof of Theorem \ref{PhiMix}. Indeed, we first reduce to the case when $\gamma_4\leq 1$ and then we replace any use of Proposition \ref{YeorStien} by Corollary \ref{Corrr}, which only requires the  $L^1$ norms of all the terms to be bounded. In the latter we used that when $\gamma_3\leq 1$ then all the terms appearing in the expectations in the terms $\zeta$ and $\cE_i,\, 1\leq i\leq 6$ have uniformly bounded $L^1$ norms.  
\qed

\begin{remark}
Using only the boundedness of $\delta_f$, under the third moment condition $\sup_j\|X_j\|_{L^3}<\infty$ our proof yield the suboptimal rate $O(NV_N^{-3/2}+\sqrt{N}V_N^{-1})$ which is $O(N^{-1/4})$ in the case $V_N$ grows linearly fast in $N$.     
\end{remark}

\subsection{Application to non-stationary uniformly bounded $\psi$-mixing sequences}
In this section we prove the following result.
\begin{theorem}\label{Psi1}
Suppose that $\sup_j\|X_j\|_{L^\infty}<\infty$ and that $V_N\to\infty$ as $N\to\infty$. Assume also that 
$$
\sum_{n=1}^\infty n^2\psi(n)<\infty.
$$
Then with $W_N=(V_N)^{-1/2}S_N$,
$$
\cW_2(W_N, N(0,1))=O(V_N^{-1/2}).
$$
\end{theorem}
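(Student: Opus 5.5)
The plan is to reduce Theorem \ref{Psi1} to Theorem \ref{Psi} by a blocking argument that produces a new sequence with linear variance growth. The model is the standard approach for non-stationary $\psi$-mixing sums, where $V_N\to\infty$ is arranged into blocks with comparable variance. Write $\gamma=\sup_j\|X_j\|_{L^\infty}$. The first step is a preliminary variance estimate: since $\sum_n\psi(n)<\infty$ and the $X_j$ are bounded, for any interval $I$ we have $\text{Var}(\sum_{j\in I}X_j)\le C\gamma^2|I|$. We also need a reverse-type control. For $\psi$-mixing (hence $\rho$-mixing with $\rho\le\psi$) sequences, the variance of a sum over $[a,b]$ is, up to an additive constant $C\gamma^2$, controlled by the sums of variances over a partition into consecutive intervals: $|\text{Var}(S_{[a,b]})-\sum_i \text{Var}(S_{I_i})|\le C\gamma^2 \cdot\#\{\text{blocks}\}$ times a constant. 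More carefully, I would use the known result (e.g.\ Peligrad/Bradley type, or \cite{DH23}) that for $\rho$-mixing sequences with $\sum\rho(n)<\infty$ one has $\text{Var}(S_{[a,b]})\ge c\sum_i\text{Var}(S_{I_i})-C$.

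The second step is the block construction. Fix a large constant $A$ (depending on $\gamma$ and the mixing rate) and partition $\{0,\ldots,N-1\}$ greedily into consecutive intervals $I_1,\ldots,I_{k_N}$ such that $A\le \text{Var}(\sum_{j\in I_i}X_j)\le A+C\gamma^2$ for each block except possibly the last. Adding one term changes the block variance by at most $C\gamma^2$ uniformly, which makes this possible. Put $\xi_i=\sum_{j\in I_i}X_j$. Then $(\xi_i)$ is again $\psi$-mixing with coefficients at most $\psi$ of the original sequence, because a gap of $n$ blocks is a gap of at least $n$ indices; hence $\sum n^2\psi_\xi(n)<\infty$. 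Using the first step together with $V_N\to\infty$, the number of blocks satisfies $k_N\asymp V_N$, so the new sequence has linear variance growth: $\text{Var}(\sum_{i\le k}\xi_i)\ge ck$ along the relevant ranges.

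The obstacle is that the $\xi_i$ are not uniformly bounded, since block lengths may be large when the variances of individual $X_j$ are small. Theorem \ref{Psi} only requires $\sup_i\|\xi_i\|_{L^4}<\infty$, so I would bound the fourth moments. For $\psi$-mixing (indeed $\phi$-mixing) bounded sequences, a Rosenthal/Utev-type inequality gives $E[\xi_i^4]\le C(\text{Var}(\xi_i)^2+\gamma^4\,\cdot\,\text{Var}(\xi_i)+\gamma^4)$, with no dependence on $|I_i|$. Since $\text{Var}(\xi_i)\le A+C\gamma^2$, this gives $\sup_i\|\xi_i\|_{L^4}\le C$. This moment bound is the step I expect to be hardest to justify cleanly. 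I would first try to prove it directly by expanding $E[\xi_i^4]$ into sums of fourth-order correlations and using Lemma \ref{KifLemm} to factor across the largest gap. The goal is to bound the result by $C(\sum\text{cov})^2$ plus lower-order terms, using $\sum n\psi(n)<\infty$.

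Finally, I would apply Theorem \ref{Psi} to $\xi_1,\ldots,\xi_{k_N}$, whose sum is $S_N$ with the same variance $V_N$. This gives $\cW_2(W_N,N(0,1))\le C(k_NV_N^{-3/2}+\sqrt{k_N}V_N^{-1})=O(V_N^{-1/2})$, since $k_N=O(V_N)$.
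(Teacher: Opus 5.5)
Your architecture is the paper's: split the indices into consecutive blocks of bounded variance, show there are $O(V_N)$ blocks, bound the block sums uniformly in $L^4$, and apply Theorem \ref{Psi} to the block sums, whose $\psi$-coefficients are dominated by $\psi(n)$ as you note. The paper imports the three structural facts from \cite{Haf}:
\begin{itemize}
\item blocks with $A\le\sup_{I\subset I_i}\|\sum_{n\in I}X_n\|_{L^2}\le 2A$, the supremum taken over initial subintervals;
\item $C_1V_N\le m_N\le C_2V_N$;
\item $\|\sum_{n\in I}X_n\|_{L^p}\le c_p(A)$ for all $p<\infty$.
\end{itemize}
Your greedy blocks and your block count are fine if you use the additive estimate $|V_N-\sum_i\text{Var}(\xi_i)|\le 2\gamma^2k_N\sum_n n\psi(n)$, which follows from $|\text{Cov}(X_a,X_b)|\le\gamma^2\psi(|a-b|)$, together with $A>4\gamma^2\sum_n n\psi(n)$. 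Drop the ``known result'' $\text{Var}(S_{[a,b]})\ge c\sum_i\text{Var}(S_{I_i})-C$ with $C$ independent of the number of pieces. It is false: take $X_j=Y_{j+1}-Y_j$ split into singletons.

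The genuine gap is the uniform $L^4$ bound, the step you flagged. Block lengths are unbounded: when $V_N=o(N)$ they must be, since $O(V_N)$ blocks cover $N$ indices. So the bound has to be free of $|I_i|$, and neither of your routes gives that.
\begin{itemize}
\item Standard Rosenthal-type inequalities for mixing sequences contain a term like $\sum_{j\in I_i}E|X_j|^4$, and $\sum_{j\in I_i}EX_j^2$ is not controlled by $\text{Var}(\xi_i)$. Take the $1$-dependent sequence $X_j=Y_{j+1}-Y_j+\varepsilon_jZ_j$, with $Y_j,Z_j$ i.i.d.\ fair signs (so $\psi(1)<\infty$) and $\varepsilon_j^2=1/(j+1)$. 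Then $V_N\to\infty$, your blocks have lengths tending to infinity, and $\sum_{j\in I_i}EX_j^4\asymp|I_i|$, while $E\xi_i^4$ stays bounded.
\item Your direct expansion via Lemma \ref{KifLemm} fails for the same reason. The remainders from factoring across the largest gap are only bounded by $C\gamma^4|I_i|\sum_g g^2\psi(g)$, because $E|X_a|$ need not be small.
\end{itemize}

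A length-free argument that works with your blocks $I_i=[a_i,e_i]$ goes as follows.
\begin{itemize}
\item Every sub-interval sum in $I_i$ has variance at most $B=4(A+C\gamma^2)$, since it is a difference of two initial sums.
\item Let $M_i=\max_{k\in I_i}|\sum_{j=a_i}^kX_j|$. Splitting at the first exceedance time and using only $\psi(1)<\infty$ gives the Ottaviani-type bound $\mathbb P(M_i>2t)\le 2\mathbb P(|\xi_i|>t)\le 2B/t^2$ for $t^2\ge 2(1+\psi(1))B$.
\item The same splitting, with overshoot at most $\gamma$, gives the Hoffmann-J{\o}rgensen-type bound $\mathbb P(M_i>3t+\gamma)\le(1+\psi(1))\,\mathbb P(M_i>t)^2$.
\item Iterating yields $\mathbb P(M_i>3^k(t_0+\gamma))\le 2^{-2^k}$ for some $t_0=t_0(A,\gamma,\psi(1))$. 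Hence $\sup_{i,N}\|\xi_i\|_{L^4}<\infty$.
\end{itemize}
With this in place, your final application of Theorem \ref{Psi} goes through as in the paper.
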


\begin{proof}
In \cite[Section 3]{Haf}, we showed that for all $A>1$ large enough we can construct a sequence of intervals $I_1,I_2,...$ in the positive integers such that $I_i$ is to the left of $I_{i+1}$ and the union of the $I_i$ is $\mathbb N$ such that for all $i$,
$$
A\leq \sup_{I\subset I_i}\left\|\sum_{n\in I}X_n\right\|_{L^2}\leq 2A
$$
where $I$ is a subinterval if $I_i$ with the same left endpoint.
Moreover, let $m_N$ be the number of intervals $I_i$ such that $I_i$ is contained in $\{1,...,N\}$. Then we showed that there are constants $C_1,C_2>0$ which depend only on some upper bound on $\sup_j\|X_j\|_\infty$ such that for all $N$,
$$
C_1V_N\leq m_N\leq C_2V_N.
$$
Furthermore, we showed that for all $2\leq p<\infty$ there is a constant $c_p=c_p(A)$ such that for all $i$ and $I$ as above we have
$$
\left\|\sum_{n\in I}X_n\right\|_{L^p}\leq c_p.
$$

Now, fix $N\in\mathbb N$ and  define $B_{i,N}=I_i$ if $i<m_N$ and $B_{m_N,N}=(B_{m_N}\cup B_{m_N+1})\cap\{1,2,...,N\}$. This induces a decomposition of $\{1,...,N\}$ into intervals $B_{i,N}, 1\leq i\leq m_N$ in the positive integers such
and such that for all $2\leq p<\infty$ and all $i\leq m_N$,
$$
\left\|\sum_{n\in B_{i,N}}X_n\right\|_{L^p}\leq 2c_p.
$$
Now the theorem follows by applying Theorem \ref{Psi} with the random variables 
$$
\widetilde X_{i}=\sum_{n\in B_{i,N}}X_n,\,\, i\leq m_N
$$ 
and $\widetilde X_i=0$ for $i>m_N$, and using that the estimates in Theorem \ref{Psi} hold with an absolute constant. 
\end{proof}
Note that Theorem \ref{Psi1} seems to be the first time optimal CLT rates in any form are derived for non-stationary sequences which are not Markov dependent or formed by an exponentially mixing chaotic nonstationary dynamical system.

\subsection*{Acknowledgments}
JL was supported by JSPS via the project LEADER.

\end{document}